\newtheorem{thm}{Theorem}[section]
\newtheorem{lem}[thm]{Lemma}
\newtheorem{cor}[thm]{Corollary}
\newtheorem{prop}[thm]{Proposition}
\theoremstyle{definition}
\newtheorem{rmk}{Remark}
\newcommand{\err}{\mathrm{err}}
\newcommand{\diam}{\mathrm{diam}}
\newcommand{\mean}{\mathrm{m}}
\newcommand{\meanvolt}{\mathrm{SV}}
\newcommand{\meanwass}{\mathrm{SW}}
\newcommand{\wass}{\mathrm W}
\newcommand{\monge}{\mathrm M}
\renewcommand{\bf}{\mathbf{f}}
\newcommand{\bg}{\mathbf{g}}
\newcommand{\Vtrap}{\boldsymbol{\mathrm V}}
\newcommand{\bV}{\boldsymbol{\mathrm V}}
\newcommand{\mV}{\mathrm V}
\newcommand{\cen}{\mathrm{cen}}
\newcommand{\bfend}{\mathbf{f}}
\newcommand{\fcen}{f_\mathrm{cen}}
\newcommand{\wcen}{\mathbf{w}_\mathrm{cen}}
\newcommand{\bfcen}{\mathbf{f}_\mathrm{cen}}
\newcommand{\EE}{\mathbb{E}}
\newcommand{\R}{\mathbb{R}}
\newcommand{\la}{\langle}
\newcommand{\ra}{\rangle}
\newcommand{\EMD}{\mathrm{EMD}}
\newcommand{\KM}{\mathrm{KM}}
\newcommand{\Prob}{\mathbb{P}}
\newcommand{\what}{\widehat}
\newcommand{\wtilde}{\widetilde}
\renewcommand{\a}{\mathbf{a}}
\renewcommand{\b}{\mathbf{b}}
\renewcommand{\c}{\mathbf{c}}
\renewcommand{\d}{\mathbf{d}}
\renewcommand{\u}{\mathbf{u}}
\renewcommand{\v}{\mathbf{v}}
\newcommand{\x}{\mathbf{x}}
\newcommand{\y}{\mathbf{y}}
\newcommand{\w}{\mathbf{w}}
\newcommand{\z}{\mathbf{z}}
\newcommand{\e}{\mathbf{e}}
\newcommand{\calR}{\mathcal{R}}
\newcommand{\calP}{\mathcal{P}}
\newcommand{\calU}{\mathcal{U}}
\renewcommand{\S}{\mathbb{S}}
\newcommand{\calM}{\mathcal{M}}
\newcommand{\calT}{\mathcal{T}}
\newcommand{\BB}{\mathbb{B}}
\newcommand{\V}{\mathcal{V}}
\renewcommand{\SS}{\mathbf{S}}
\newcommand{\sign}{\mathrm{sign}}
\newcommand{\AC}{\mathcal{A}}
\begin{document}
\title{
\textbf{On metrics robust to noise and deformations}
}
\author{\textbf{William Leeb} \\ \\
School of Mathematics \\
University of Minnesota, Twin Cities  \\
Minneapolis, MN}
\date{}
\maketitle

\begin{abstract}
We study the properties of a family of distances between functions of
a single variable. These distances are examples of 
integral probability metrics, and have been used previously for
comparing probability measures on the line; special cases include the
Earth Mover's Distance and the Kolmogorov Metric. We examine their
properties for general
signals, proving that they are robust to a broad class of deformations.
We also establish corresponding robustness results for the induced sliced
distances between multivariate functions.
Finally, we establish error bounds for approximating the 
univariate metrics
from finite samples, and prove that these approximations are robust to additive
Gaussian noise. The results are illustrated in numerical experiments,
which include comparisons with Wasserstein distances.
\end{abstract}

\section{Introduction}

Many tasks in statistics and machine learning require specification
of a metric that measures the similarity between data vectors. 
For example, the goal of clustering is to group 
points together that are close and separate points that are far,
where ``close'' and ``far'' are determined by a certain metric
or similarity measure that is designed to capture relevant
features
\cite{rokach2009survey, verde2007dynamic, aggarwal2014data}.
Such a method's effectiveness depends crucially on the choice of metric.
An appropriate metric will be robust to noise and to
irrelevant deformations of the input data, so that only the
``meaningful'' characteristics of each data vector inform the distance.

A widely-used class of metrics are the Wasserstein distances
for comparing probability distributions $f$ and $g$ defined over
a metric space $\mathcal{X}$.
Informally, the Wasserstein distance between $f$ and $g$ is equal to
the minimal cost of transforming $f$ into $g$ by rearranging the mass,
where the cost is determined by the metric on $\mathcal{X}$
\cite{villani2003topics, villani2008optimal};
we review the precise definition of Wasserstein distances
in Section \ref{sec:wasserstein}.
The Wasserstein distances
are popular metrics in a range of machine learning and statistical applications
\cite{panaretos2019statistical, peyre2019computational, santambrogio2015optimal,
bonneel2015sliced, rabin2011wasserstein, rubner2000earth,
levina2001earth, bernton2019parameter, rigollet2019uncoupled,
moosmuller2023linear, cloninger2019people}.

The goal of this work is to begin to address the question of
whether the favorable properties of Wasserstein distances
are shared by other families of metrics. More specifically,
we consider two known
``robustness'' properties exhibited by the Wasserstein distances.
The first result states that if there is a smooth bijection
$\Phi: \mathcal{X} \to \mathcal{X}$
and
\begin{align}
g(x) = f(\Phi(x)) \frac{d \Phi}{d x}(x),
\end{align}
where $\frac{d \Phi}{d x}(x)$ is the Radon-Nikodym derivative of $\Phi$, then
\begin{align}
\label{eq:wasserstein_deformation}
\wass_p(f,g) \le \sup_{x \in \mathcal{X}} d_{\mathcal{X}}(x,\Phi(x)),
\end{align}
where $\wass_p$ denotes the $p$-Wasserstein distance.
This bound is straightforward to prove (indeed, it is almost
tautological from the Monge formulation of Wasserstein distances),
and a self-contained
argument may be found in Section \ref{sec:wasserstein}; see also
\cite{leeb2016holder} for another argument when $p=1$.
Informally,
\eqref{eq:wasserstein_deformation} tells us that the Wasserstein distances
are robust to ``small'' deformations of a distribution,
where the ``size'' of a deformation is the maximum distance
that any point in the domain may be moved.

The second property of Wasserstein
distances that we will consider is from the recent paper \cite{rao2020wasserstein},
and
suggests that the $p$-Wasserstein distance $\wass_p$ is a good choice
for clustering tomographic projection images that arise
from cryo-electron microscopy (cryo-EM), a technique for molecular reconstruction
that is increasingly used in structural biology
\cite{singer2020computational, bendory2020single, doerr2016single}. Suppose
$f$ and $g$ are functions of two variables that are each projections
of a common three-dimensional volume $F$; that is,
\begin{align}
f(x,y) = \int_{\R} F(R_f(x,y,z)) dz,
\end{align}
where $R_f$ is an orthogonal transformation; and similarly for $g$.
Then \cite{rao2020wasserstein} proves that
\begin{align}
\label{eq:wasserstein_angle}
\min_{R \in SO(2)} \wass_p(f,g \circ R) \le \theta(R_f,R_g),
\end{align}
where $\theta(R_f,R_g)$ is the angle between the projection directions
of $R_f$ and $R_g$.
In fact, in Section \ref{sec:wasserstein} we observe that Wasserstein
distances exhibit a somewhat more general robustness property,
namely, the Wasserstein distance between projections
of two functions is robust to deformations of those
functions. Though we have not seen this result
stated in the literature, it follows rather trivially from
\eqref{eq:wasserstein_deformation} and the analysis in the
paper \cite{rao2020wasserstein}; see Theorem \ref{thm:wasserstein_projections}.

In many problems where Wasserstein distances are used,
the transportation problem solved
by the distance computation is not of interest in and of itself;
rather, one only requires a distance that is robust
to distortions of the data.
Furthermore, because Wasserstein distances are defined between positive measures,
it can be awkward to use or analyze them in settings where
the observed signals take on negative values, as is typically the case
when observations are corrupted by additive noise.
It is natural to ask whether there are other families
of metrics with similar robustness properties
as the Wasserstein metrics,
and which are robust to additive noise.
Furthermore, while the motivation in \cite{rao2020wasserstein}
for the bound \eqref{eq:wasserstein_angle}
is comparing two projection images in cryo-electron
microscopy, 
there are, in fact, numerous scientific problems for which
the measurement modality only permits
observing projections of an object, from which
the object itself must be reconstructed
\cite{deans2007radon, natterer1986mathematics,
helgason2010integral, herman2009fundamentals, singer2013two, coifman2008graph}.
It is therefore of interest to consider the properties of
distances used to compare tomographic projections.

In this paper, we approach these questions by
studying a family of simple metrics between
single-variable functions, including tomographic projections of multivariate functions.
These metrics are induced by norms, denoted by $\|f\|_{V^p}$,
which are the $p$-norms of the Volterra operator
(the indefinite integral operator) applied to $f$.
We call the norm $\| f \|_{V^p}$ the \emph{Volterra $p$-norm},
and its induced metric the \emph{Volterra $p$-distance},
or \emph{Volterra $p$-metric}.
The Volterra distances have been used previously for
comparing probability measures on the
real line \cite{muller1997integral, maejima1987ideal}; however,
unlike the Wasserstein distances, the Volterra distances
are defined between all integrable functions, not just between probability
measures; this makes it natural to consider
their robustness to additive noise
in addition to geometric distortions.

We show that the Volterra distances exhibit robustness properties
like \eqref{eq:wasserstein_deformation} and \eqref{eq:wasserstein_angle},
as well as a property generalizing \eqref{eq:wasserstein_deformation}
and \eqref{eq:wasserstein_angle}: namely, that when comparing
univariate tomographic projections of multivariate functions, the distance
is robust to deformations in the higher dimensional space.
In fact, the robustness bounds we prove for Volterra metrics
are stronger than those for Wasserstein distances;
more precisely, when $p>1$, the Volterra $p$-metric is
bounded by a concave, non-linear function of the deformation's size,
suggesting that the Volterra distances are more robust to large deformations.
Finally, we analyze discrete approximations to the Volterra distances,
showing that they
converge to their continuous counterparts for a broad class of
``well-behaved'' functions,
while also being robust to additive Gaussian noise.

Of course, the Volterra metrics are limited in their applicability,
as they are only defined between univariate functions;
the Wasserstein distances, by contrast, are naturally
defined between densities of any number of variables.
It is therefore of interest to ask whether there are
Volterra-type metrics for multivariate functions that exhibit similar
properties.
We leverage the theory
for Volterra distances to study distances between
multivariate functions induced
by ``slicing'', a technique that has been used for
Wasserstein distances \cite{rabin2011wasserstein, bonneel2015sliced,
kolouri2019generalized,
kolouri2018sliced, deshpande2018generative, nietert2022statistical}.
The robustness properties of the Volterra distances
immediately induce corresponding robustness properties of the sliced
Volterra distances.

Central to our analysis is the variational characterization of
Volterra distances, which is
well-known (and for which we provide a self-contained
proof in Section \ref{sec:variational}). This characterization shows that the Volterra distances
are a special case of the class of \emph{maximum mean discrepancies},
also known as \emph{integral probability metrics}, which
are typically used for comparing probability densities
\cite{gretton2012kernel, cherief2020mmd, arbel2019maximum,
muller1997integral, maejima1987ideal, agrawal2021optimal,
sriperumbudur2012empirical}. If $f$ and $g$ are two functions
on a measure space $\mathcal{X}$, a maximum mean discrepancy
is a metric of the form
\begin{align}
\label{eq:mmd}
d_{\mathcal{F}}(f,g) = \sup_{h \in \mathcal{F}} \int_{\mathcal{X}} h(x)(f(x) - g(x)) dx,
\end{align}
where $\mathcal{F}$ is a specified class of test functions.
It is a consequence of the Kantorovich-Rubinstein Theorem
\cite{kellerer1985duality, dudley2018real, kantorovich1958space,
kantorovich1982functional, edwards2011kantorovich}
that the $1$-Wasserstein distance, or Earth Mover's Distance,
between densities $f$ and $g$ is equal to the distance \eqref{eq:mmd}
when $\mathcal{F}$ is the set of $1$-Lipschitz functions
with respect to a metric on $\mathcal{X}$.
By contrast, when $\mathcal{X}$ is an interval,
the Volterra $p$-distance is obtained by taking $\mathcal{F}$
to be the space of functions
with derivative in $L^{p/(p-1)}$. The Volterra $1$-distance is
equal to the $1$-Wasserstein metric in one variable, but
the Volterra $p$-distance and the $p$-Wasserstein metric
are not equal for any $p > 1$.

The remainder of the paper is structured as follows:

\begin{enumerate}

\item
Section \ref{section:preliminaries} reviews basic definitions, notation, and properties,
including of
the Lebesgue norms, the Volterra operator, push-forwards and deformations,
Wasserstein and sliced Wasserstein distances, and
tomographic projections.
Theorem \ref{thm:wasserstein_projections} provides a general robustness result
on Wasserstein distances, which, though it follows easily
from existing work, does not appear to have been stated previously.

\item
Section \ref{section:volterra} defines the basic objects of study, namely
the Volterra distances
between univariate functions,
the sliced Volterra distances between multivariate
functions, and the trapezoidal rule approximations
to the Volterra distances. It also contains a self-contained proof
of the well-known variational characterization of
Volterra distances (Proposition \ref{prop:variational}).

\item
Section \ref{section:properties} contains statements
of the theorems on the robustness
of the Volterra and sliced Volterra distances.
Theorem \ref{thm:projpert} is a general robustness result
for comparing univariate projections; Theorems \ref{thm:rotations2D} and
\ref{thm:deformation} give stronger bounds for the case of rotations (i.e.\ changes
in projection angle) and monotonically increasing deformations,
respectively.
Theorems \ref{thm:sliced_deformations} and \ref{thm:sliced_rotations2D} show robustness of
the sliced Wasserstein distances.

\item
Section \ref{section:discrete} analyzes the trapezoidal rule approximation
to the Volterra distance. Theorems \ref{thm:convergence_lipschitz}
and \ref{thm:convergence_smooth}
show that when samples are taken from
sufficiently regular functions, the approximation
converges to the true Volterra distance. Theorem
\ref{thm:noise} and Corollary \ref{cor:convergence} show that the
approximations are robust to additive Gaussian noise.

\item
Section \ref{section:numerical} shows the results of numerical
experiments illustrating the theoretical results,
including comparisons between the Volterra,
Wasserstein, and Lebesgue distances.

\item
Sections \ref{section:proofs_properties} and
\ref{section:proofs_discrete} contain proofs
of the results from Sections \ref{section:properties}
and \ref{section:discrete}, respectively.
Theorem \ref{thm:general} in Section \ref{section:proofs_properties}
gives a more general statement about the Volterra distances,
from which the theorems in Section \ref{section:properties}
may all be derived; the variational characterization
in Proposition \ref{prop:variational} is central
to the analysis.

\item
Section \ref{section:conclusion} concludes the paper, providing
a summary and topics for future research.

\end{enumerate}

\section{Preliminaries}
\label{section:preliminaries}

This section introduces the basic definitions and notation
that will be used in the rest of the paper.
Familiarity with basic concepts of measure and integration,
e.g.\ at the level of \cite{folland1999real}, will be assumed.

\subsection{The Lebesgue $p$-norms}

We recall the standard definition of the Lebesgue $p$-norm.
Let $F : \R^d \to \R$ be a measurable function.
If $1 \le p < \infty$,
then the Lebesgue $p$-norm of $F$ is defined by
\begin{align}
\|F\|_{L^p} = \left( \int_{\R^d} |F(\x)|^p d\x\right)^{1/p}.
\end{align}
For $p = \infty$, we define
\begin{align}
\|F\|_{L^\infty} = \operatorname*{ess \, sup}_{\x \in \R^d} |F(\x)|.
\end{align}
We denote by $L^p$ the set of all functions $F : \R^d \to \R$ with
$\|F\|_{L^p} < \infty$, and by $L^p(A)$ the subset of $L^p$ containing only functions supported
on $A$.
As is well-known, if $F$ is supported in a bounded set $A \subset \R^d$, then
$\|F\|_{L^p} \le \|F\|_{L^q} |A|^{1/p-1/q}$
if $p \le q$, where $|A|$ denotes the Lebesgue measure of $A$;
in particular, $L^p(A) \subset L^q(A)$.
If $F$ is in $L^p$ and $G$ is in $L^q$, where $1/p + 1/q = 1$, we define
their inner product by
\begin{align}
\langle F, G\rangle = \int_{\R^d} F(\x) G(\x) d\x.
\end{align}

We also define the normalized Lebesgue $p$-norm $\|\x\|_{\ell_p}$ for vectors $\x$ in $\R^n$.
When $1 \le p < \infty$,
\begin{align}
\|\x\|_{\ell_p} = \left( \frac{1}{n}  \sum_{j=1}^{n} |x_j|^p \right)^{1/p},
\end{align}
and when $p=\infty$,
\begin{align}
\|\x\|_{\ell_\infty} = \max_{1 \le k \le n} |x_k|.
\end{align}
Note the normalization by $n$ when $p < \infty$. With this convention,
$\|\x\|_{\ell_p} \le \|\x\|_{\ell_q}$
whenever $p \le q$.

We will denote the unnormalized $2$-norm of a vector $\x$ in $\R^d$
by
\begin{align}
|\x| = \left(\sum_{j=1}^{d} x_j^2 \right)^{1/2}.
\end{align}
Note that we do not normalize by $1/d$ in this case.
The normalized norm $\|\x\|_{\ell_p}$ will be used when $\x$ is a vector
of samples of a function,
and the unnormalized norm $|\x|$ when $\x$ is a variable.
We define the inner product between two vectors $\x$ and $\y$ in $\R^d$ by
\begin{align}
\langle \x,\y\rangle = \sum_{j=1}^d x_j y_j.
\end{align}

\subsection{Trapezoidal rule approximation}
\label{sec:trap_rule}

If $f$ is a function on an interval $[a,b]$
and $n \ge 1$ is an integer,
the trapezoidal rule approximation to $\int_{a}^{b} f(x) dx$ is defined as
\begin{align}
T_n(f,a,b) = \frac{b-a}{2n} \sum_{k=0}^{n-1} (f(a_k) + f(a_{k+1})),
\end{align}
where
\begin{align}
\label{eq:subinterval_ends}
a_k = a + \frac{k}{n}(b-a), \quad 0 \le k \le n.
\end{align}
It is well-known that if $f$ is $C^3$, then
\begin{align}
\label{eq:trap_error}
\left| T_n(f,a,b) - \int_{a}^{b} f(x) dx \right|
\le \frac{1}{12} \|f''\|_{L^\infty} \frac{(b-a)^3}{n^2};
\end{align}
see, e.g., \cite{dahlquist1974}.

\subsection{Absolute continuity}

Suppose $a < b$.
Recall that a function $G$ on $[a,b]$ is said to be absolutely continuous
if it can be written as
\begin{align}
G(x) = G(a) + \int_{a}^{x} g(t) \,dt
\end{align}
for a function $g$ in $L^1([a,b])$. If $G$ is absolutely continuous,
then it is differentiable almost everywhere, and $G' = g$ where the
derivative exists.
We denote by $\AC_0$ the set of absolutely continuous functions $G$ satisfying
$G(b)=0$; these functions may be written as
\begin{align}
G(x) = -\int_{x}^{b} g(t) \, dt
\end{align}
where $g = G'$ almost everywhere.
For brevity, whenever $G$ is in $\AC_0$, $G'$ will denote any function such that
$G(x) = -\int_{x}^{b} G'(t) \, dt$.

The following result is standard (e.g., see Section 3.5 of \cite{folland1999real}):

\begin{thm}[Integration by parts]
If $F$ and $G$ are absolutely continuous functions on $[a,b]$, then
\begin{align}
\int_{a}^{b} (F'(x) G(x) + F(x) G'(x) ) dx = F(b)G(b) - F(a) G(a).
\end{align}

\end{thm}

\subsection{The Volterra operator}

The Volterra operator $\V$ is defined on $L^1([a,b])$ by
\begin{align}
(\V f)(x) = \int_{a}^{x} f(t) dt.
\end{align}
We note that this is only the simplest of a large family of related operators
that have been widely studied \cite{gohberg1970theory}.
Importantly, if $f$ is in $L^1([a,b])$, $\V f$ is in $L^\infty$, with
$\|\V f\|_{L^\infty} \le \|f\|_{L^1}$;
furthermore, $\V f$ is, by definition, absolutely continuous when $f$ is in $L^1([a,b])$.

The adjoint transform $\V^*$ is given by
\begin{align}
(\V^* f)(x) = \int_{x}^{b} f(t) dt.
\end{align}
This operator satisfies
\begin{align}
\langle \V f, g \rangle = \langle f, \V^* g \rangle
\end{align}
where $f$ and $g$ are two functions in $L^1([a,b])$.

\subsection{Push-forwards and $\epsilon$-deformations}
\label{sec:pushforwards}

Let $\Omega \subset \R^d$ be
a non-empty, bounded, open set.
Suppose that $\mu$ is a finite, signed measure on $\Omega$,
and that $\Psi : \Omega \to \R^d$ is a measurable function.
Then $\Psi$ induces a signed measure on $\Psi(\Omega)$, denoted $\Psi_\sharp \mu$,
defined by
\begin{align}
(\Psi_\sharp \mu)(E) = \mu(\Psi^{-1}(E)),
\end{align}
for measurable sets $E \subset \Psi(\Omega)$. The measure $\Psi_\sharp \mu$
is referred to as the \emph{push-forward} of $\mu$ induced from $\Psi$ \cite{peyre2019computational}.
Note that $(\Psi_\sharp \mu)(\Psi(\Omega)) = \mu(\Psi^{-1}(\Psi(\Omega))) = \mu(\Omega)$;
that is, the push-forward preserves the total measure.

Now suppose that $\mu$ is induced from a function $f$ supported on $\Omega$; that is,
$\mu(E) = \int_{E} f(\x) d\x$ for all measurable $E \subset \Omega$.
Suppose too that $\Psi$ is a diffeomorphism between $\Omega$ and $\Psi(\Omega)$;
that is, it is $C^1$, one-to-one, and $\det (\nabla \Psi(\x)) \ne 0$,
where $\nabla \Psi(\x)$ denotes the
Jacobian matrix of $\Psi$ at $\x$.
Let $\Phi = \Psi^{-1}$ denote the functional inverse of $\Psi$.
Then $\Psi_\sharp \mu$ has density
$f(\Phi(\x))|\det(\nabla \Phi (\x))|$.
Indeed, using the change of variables $\x = \Phi(\u)$ and $d\x = |\det(\nabla \Phi(\u))|d\u$,
\begin{align}
(\Psi_\sharp \mu)(E)
= \mu(\Psi^{-1}(E))
= \int_{\Psi^{-1}(E)} f(\x) d\x
= \int_{E} f(\Phi(\u))|\det(\nabla \Phi(\u))| d\u.
\end{align}
When convenient, we will write $(\Psi_\sharp f)(\x) = f(\Phi(\x))|\det(\nabla \Phi(\x))|$,
or $f_\Phi(\x) = f(\Phi(\x))|\det(\nabla \Phi(\x))|$.

When the diffeomorphism $\Psi$ does not move points by more than
a value $\epsilon > 0$ (that is, $|\x - \Psi(\x)| \le \epsilon$
for all $\x$ in $\Omega$, or equivalently,
$|\x - \Phi(\x)| \le \epsilon$
for all $\x$ in $\Psi(\Omega)$), we will refer to $\Psi_\sharp f$
as an \emph{$\epsilon$-deformation} of $f$; we will also refer to $\Psi$ itself
as an $\epsilon$-deformation of $\Omega$.
For example, if $\u$ is a fixed unit vector, then the function
$\Psi(\x) = \x + \epsilon \u$ is
an $\epsilon$-deformation.

\subsection{Tomographic projections and the Radon transform}

Let $\calU = (\u_1,\dots,\u_r) \in \S^{d-1} \times \cdots \times S^{d-1}$
(where $\S^{d-1} \subset \R^d$
is the $(d-1)$-dimensional unit sphere)
denote an ordered collection of $r$ unit vectors in $\R^d$.
Let $\u_{r+1},\dots,\u_{d}$ denote $d-r$ orthonormal vectors that are
orthogonal to $\u_1,\dots,\u_r$.
We define the operator $\calP_\calU$ by
\begin{align}
(\calP_\calU F)(t_1,\dots,t_r)
    = \int_{\R^{d-r}} F(t_1\u_1 + \dots + t_r \u_r  + s_1 \u_{r+1} + \dots + s_{d-r} \u_{d})
        ds_1 \cdots ds_{d-r}.
\end{align}
We refer to $\calP_\calU F$ as the \emph{tomographic projection} of $F$
onto the subspace spanned by $\u_1,\dots,\u_r$.
When $r=1$, we will denote the tomographic projection of $F$ onto
the span of a unit vector $\u$ by $\calP_\u F$.
Note that in this case, the \emph{Radon transform}
$\calR F : \R \times \S^{d-1}$ of the function $F$
is defined by $(\calR F)(t,\u) = (\calP_\u F)(t)$.
For more background on these transforms,
see, for example, the references \cite{natterer1986mathematics, helgason2010integral}.

\subsection{Wasserstein distances}
\label{sec:wasserstein}

If $F$ and $G$ are probability densities on a subset $\Omega \subset \R^d$,
their $p$-Wasserstein distance $\wass_p(F,G)$ (also known as the
Kantorovich distance) is defined as
\begin{align}
\wass_p(F,G) = \min_{\Pi \in \mathcal{M}(F,G)} 
    \left(\int_{\Omega} \int_{\Omega} |\x-\y|^p d\Pi(\x,\y)\right)^{1/p},
\end{align}
where $\mathcal{M}(F,G)$ denotes the space of all probability measures on
$\Omega \times \Omega$ with marginals equal to $F$ and $G$, respectively
\cite{villani2003topics, villani2008optimal}.
That is, $\Pi \in \mathcal{M}(F,G)$ if for all measurable $E \subset \Omega$,
\begin{align}
\Pi(E \times \Omega) = \int_E F(\x) d\x,
\end{align}
and
\begin{align}
\Pi(\Omega \times E) = \int_E G(\y) d\y.
\end{align}

Informally, $\wass_p(F,G)$ is the minimal cost of rearranging
a unit of mass with distribution $F$ into one with distribution
$G$, where the cost of moving mass between locations $\x$ and $\y$
is $|\x - \y|^p$.
The distance $\wass_1(F,G)$ is also known as the \emph{Earth Mover's Distance (EMD)}
between the probability measures $F$ and $G$ \cite{villani2003topics, villani2008optimal},
which we will also denote by $\EMD(F,G)$.
The Wasserstein distances and their variants have been widely used in statistics,
machine learning, image processing, and related areas
\cite{panaretos2019statistical, peyre2019computational, santambrogio2015optimal,
bonneel2015sliced, rabin2011wasserstein, rubner2000earth,
levina2001earth, bernton2019parameter, rigollet2019uncoupled,
moosmuller2023linear, cloninger2019people}.

The Wasserstein distance
is a relaxation of the Monge distance, defined by
\begin{align}
\monge_p(F,G)
= \min_{\Phi \in \mathcal{T}(F,G)} 
    \left(\int_{\Omega} |\x-\Phi(\x)|^p F(\x) \,d\x \right)^{1/p}
\end{align}
where $\mathcal{T}(F,G)$ is the set of all functions $\Phi : \Omega \to \Omega$
such that $\int_{E} G(\x) d\x = \int_{\Phi^{-1}(E)} F(\x) d\x$;
that is, all functions $\Phi$ which push $F$ onto $G$, in the sense described
in Section \ref{sec:pushforwards}.
Indeed, any function $\Phi$ in $\calT(F,G)$ induces a measure $\Pi_\Phi$
in $\calM(F,G)$, with
\begin{align}
\label{eq:wp_monge}
\int_{\Omega} \int_{\Omega} |\x-\y|^p d\Pi_\Phi(\x,\y)
= \int_{\Omega} |\x-\Phi(\x)|^p F(\x) \,d\x,
\end{align}
and hence $\wass_p(F,G) \le \monge_p(F,G)$. (In fact, when $\monge_p(F,G)$
is finite, equality holds; see \cite{santambrogio2015optimal}.)
This implies \eqref{eq:wasserstein_deformation};
indeed, if $\Phi$ is a smooth bijection on $\Omega$
satisfying $|\x - \Phi(\x)| \le \epsilon$
for all $\x$, and $F_\Phi(\x) = F(\Phi(\x))|\det(\nabla \Phi(\x))|$,
then $\Phi$ is contained in $\calT(F,F_\Phi)$, and so
\begin{align}
\label{eq:wasserstein_deformation2}
\wass_p(F,F_\Phi) \le \monge_p(F,F_\Phi) \le
\left(\int_{\Omega} |\x-\Phi(\x)|^p F(\x) \,d\x \right)^{1/p}
\le \epsilon \left(\int_{\Omega} F(\x) \,d\x \right)^{1/p}
= \epsilon.
\end{align}

In fact, a more general robustness result may be shown,
which we state now.
The proof is nearly identical to that found
in \cite{rao2020wasserstein}.
\begin{thm}
\label{thm:wasserstein_projections}
Suppose $F$ is a probability density supported on
a set $\Omega$, and let $\Phi : \Omega \to \Omega$ be an $\epsilon$-deformation.
For $\calU = (\u_1,\dots,\u_r)$, where $\u_1,\dots,\u_r$ are orthonormal,
let $\calP = \calP_\calU$ denote the tomographic projection operator.
Then for all $p \ge 1$,
\begin{align}
\wass_p(\calP F, \calP F_\Phi) \le \epsilon.
\end{align}

\begin{proof}
An identical proof to that of Lemma 1 in \cite{rao2020wasserstein} shows that
$\wass_p(\calP F, \calP F_\Phi) \le \wass_p(F,F_\Phi)$
(note that the left side refers to transportation in $\R^{d-1}$,
and the right side to $\R^d$).
The bound then follows from \eqref{eq:wasserstein_deformation2}.
\end{proof}

\end{thm}

When $d=1$, it is known \cite{santambrogio2015optimal}
that  $\wass_p(F,G)$ may be written as follows:
\begin{align}
\label{eq:wasserstein_1d}
\wass_p(F,G) = \| (\V F)^{-1} - (\V G)^{-1} \|_{L^p}.
\end{align}
Here, $(\V F)^{-1}$ denotes the functional inverse of
$\V F$, defined as
\begin{align}
(\V F)^{-1}(x) = \inf\{t \in [a,b] \ : \ (\V F)(t) \ge x\}.
\end{align}
When $p=1$, it is also well-known that
$\wass_1(F,G) = \| \V F - \V G \|_{L^1}$.

\subsection{Sliced Wasserstein distance}
\label{sec:sliced_wasserstein}

The \emph{sliced Wasserstein distance}
\cite{rabin2011wasserstein}
is defined between two probability
densities $F$ and $G$ in $\R^{d}$ as follows:
\begin{align}
\meanwass_{p,\eta}(F,G)
= \left(\int_{\S^{d-1}}
            \wass_p^p(\calP_\w F, \calP_\w G) d\eta(\w) \right)^{1/p}
\end{align}
where $\eta$ is a suitable probability measure over $\S^{d-1}$.
That is, $\meanwass_{p,\eta}(F,G)$ is obtained by averaging the
distances between the one-dimensional projections of $F$ and $G$
over all directions.

One advantage of these metrics is
that each one-dimensional
distance $\wass_p(\calP_\w F, \calP_\w G)$
may be computed rapidly by using the formula \eqref{eq:wasserstein_1d}.
Sliced Wasserstein distances have been the subject of
considerable research activity \cite{rabin2011wasserstein, bonneel2015sliced,
kolouri2019generalized,
kolouri2018sliced, deshpande2018generative, nietert2022statistical}.

\section{Volterra distances}
\label{section:volterra}

In this section, we introduce our basic objects
of study, the Volterra distances and their discrete approximatons,
and the sliced Volterra distances.
Section \ref{sec:volterra_defined} defines the Volterra norms and 
corresponding distances
for functions of a single variable; Section \ref{sec:variational}
reviews the variational characterization of these distances;
Section \ref{sec:sliced_defined} defines the sliced Volterra distances;
and Section \ref{sec:trapezoidal} defines the trapezoidal rule approximations
to the Volterra distances.

\subsection{The Volterra norms and distances}
\label{sec:volterra_defined}

Let $f$ be in $L^1([a,b])$. For any value $1 \le p  \le \infty$,
we define the following norm, which we will refer to as the \emph{Volterra $p$-norm}:
\begin{align}
\|f\|_{V^p} = \|\V f\|_{L^p}.
\end{align}
Concretely, when $1 \le p < \infty$,
\begin{align}
\|f\|_{V^p}
= \left( \int_{a}^{b} \left| \int_{a}^{x} f(t) dt \right|^p dx\right)^{1/p},
\end{align}
and when $p = \infty$,
\begin{align}
\|f\|_{V^\infty}
=\operatorname*{ess \, sup}_{a \le x \le b} \left|\int_{a}^{x} f(t) dt \right|.
\end{align}
Note that, because $\V f$ is in $L^\infty([a,b])$, the Volterra $p$-norm of $f$ is finite
for any function $f$ in $L^1([a,b])$.
If $f$ and $g$ are two functions in $L^1([a,b])$, we will refer to
$\|f-g\|_{V^p}$ as the \emph{Volterra $p$-distance},
or $\emph{Volterra $p$-metric}$,
between $f$ and $g$.

\begin{rmk}
When $p=\infty$ and $f$ and $g$ are two probability densities,
the Volterra $\infty$-distance is known as the Kolmogorov Metric between $f$ and $g$ 
\cite{gibbs2002choosing}:
$\KM(f,g) = \|f - g\|_{V^\infty}$.
The KM arises in the context of goodness-of-fit
testing in statistics \cite{massey1951kolmogorov}. 
\end{rmk}

\begin{rmk}
When $p=1$ and $f$ and $g$ are two probability densities,
the Volterra $1$-distance is equal to the Earth Mover's Distance between $f$ and $g$
described in Section \ref{sec:wasserstein}:
$\EMD(f,g) = \|f-g\|_{V^1}$.
When $p>1$, however, the $p$-Wasserstein distance $\wass_p(f,g)$
is equal to $\|(\V f)^{-1} - (\V g)^{-1}\|_{V^p}$,
and is generally \emph{not} 
equal to the Volterra $p$-distance $\|\V f - \V g\|_{L^p}$.
\end{rmk}

\subsection{Variational formulation of $\|f\|_{V^p}$}
\label{sec:variational}

The following result is an alternate formulation of the Volterra norm
that will be useful for analysis.
It essentially appears as Theorem 1 in \cite{maejima1987ideal};
we provide a self-contained proof for the reader's convenience.

\begin{prop}
\label{prop:variational}
Let $1 \le p  \le \infty$ and let $q$ be the conjugate exponent:
\begin{align}
\frac{1}{p} + \frac{1}{q} = 1.
\end{align}
Then for any  function $f$ in $L^1([a,b])$,
\begin{align}
\label{eq:variational}
\|f\|_{V^p}
= \sup_{G \in \AC_0 : \|G'\|_{L^q} \le 1} \langle f, G \rangle.
\end{align}
\end{prop}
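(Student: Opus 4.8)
The plan is to derive the variational formula directly from the standard duality between $L^p$ and $L^q$, combined with the adjoint relation $\langle \V f, g\rangle = \langle f, \V^* g\rangle$ recorded in Section~\ref{section-preliminaries}. Since $f$ is bounded and measurable, $\V f$ is absolutely continuous and hence lies in $L^p$ for every $p$, so all the quantities below are finite and the manipulations are justified.

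First I would invoke the duality characterization of the $p$-norm. For $p \in [1,\infty)$ the dual of $L^p$ is $L^q$ and one has the attained identity $\|h\|_p = \sup_{\|g\|_q \le 1}\langle h, g\rangle$; for $p = \infty$ the same identity holds with $q = 1$, the supremum now being taken over $\|g\|_1 \le 1$ (and possibly not attained, which is harmless since the statement only asserts a supremum). Applying this to $h = \V f$ gives
\begin{align}
\|f\|_p^* = \|\V f\|_p = \sup_{\|g\|_q \le 1}\langle \V f, g\rangle.
\end{align}
Next I would move the Volterra operator onto the test function via its adjoint, writing $\langle \V f, g\rangle = \langle f, \V^* g\rangle$, so that
\begin{align}
\|f\|_p^* = \sup_{\|g\|_q \le 1}\langle f, \V^* g\rangle = \sup_{\|g\|_q \le 1}\int_{0}^{1} f(x)\,(\V^* g)(x)\,dx.
\end{align}

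The final step is a change of variables inside the supremum: set $G = \V^* g$, i.e. $G(x) = -\int_{x}^{1} g(t)\,dt$. By the description of $\AC_0$ in Section~\ref{section-preliminaries}, the assignment $g \mapsto G$ is a bijection from $L^q$ onto $\AC_0$, with $G' = g$ almost everywhere; consequently the constraint $\|g\|_q \le 1$ is exactly the constraint $\|G'\|_q \le 1$, and substituting yields the claimed identity \eqref{eq:variational}.

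The only points requiring care are the $p=\infty$ endpoint of the duality statement, where one must use the sup-form $\|h\|_\infty = \sup_{\|g\|_1\le 1}\langle h,g\rangle$ rather than an attained maximum, and the verification that the correspondence $g \leftrightarrow G = \V^* g$ is genuinely onto $\AC_0$, so that no admissible test function $G$ is omitted from the supremum. Both are routine given the material already developed: the former is the standard duality for $L^\infty$, and the latter is immediate from the representation of $\AC_0$ functions as $G(x) = -\int_{x}^{1} G'(t)\,dt$. I expect the adjoint computation and this bijection to be the conceptual heart of the argument, while everything else reduces to bookkeeping.
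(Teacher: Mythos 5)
Your proposal is correct and follows essentially the same route as the paper's own proof: $L^p$--$L^q$ duality applied to $\V f$, the adjoint identity $\langle \V f, g\rangle = \langle f, \V^* g\rangle$, and the identification of $\{\V^* g : \|g\|_q \le 1\}$ with $\{G \in \AC_0 : \|G'\|_q \le 1\}$. The extra care you note about the $p=\infty$ endpoint and the surjectivity of $g \mapsto \V^* g$ onto $\AC_0$ is sound, and is handled implicitly in the paper.
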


\begin{proof}[Proof of Proposition \ref{prop:variational}]
By duality of $L^p$ and $L^q$, we have:
\begin{align}
\|f\|_{V^p} = \|\V f\|_{L^p}
= \sup_{g:\|g\|_{L^q} \le 1} \int_{a}^{b}  (\V f)(x) g(x)dx
= \sup_{g:\|g\|_{L^q} \le 1} \langle \V f ,g \rangle
= \sup_{g:\|g\|_{L^q} \le 1} \langle f , \V^* g \rangle.
\end{align}
Any function of the form $\V^* g$ is contained in $\AC_0$,
and any function $G$ in $\AC_0$ is of the form
$G = \V^* g$ where $g = G'$ almost everywhere. Consequently:
\begin{align}
\|f\|_{V^p}
= \sup_{g:\|g\|_{L^q} \le 1} \langle f , \V^* g \rangle
= \sup_{G \in \AC_0:\|G'\|_{L^q} \le 1} \langle f , G \rangle,
\end{align}
which completes the proof.
\end{proof}

\subsection{Sliced Volterra distances}
\label{sec:sliced_defined}

Analogous to the sliced Wasserstein distances reviewed
in Section \ref{sec:sliced_wasserstein},
we define the \emph{sliced Volterra distance} between functions $f$
and $g$ on $\R^d$:
\begin{align}
\meanvolt_{p,\eta}(f,g) = \left(\int_{\S^{d-1}}
            \|\calP_\w f - \calP_\w g\|_{V^p}^p d\eta(\w) \right)^{1/p},
\end{align}
where $\eta$ is a probability measure over $\S^{d-1}$.
That is, $\meanvolt_{p,\eta}(f,g)$ is obtained by averaging the
Volterra $p$-distances between the one-dimensional projections of $f$ and $g$
over all directions.

\subsection{Trapezoidal rule approximation to $\|f\|_{V^p}$}
\label{sec:trapezoidal}

Suppose $f$ is a function on $[a,b]$, and we are given samples of $f$
on an equispaced grid of points in $[a,b]$, from which we wish to approximate
$\|f\|_{V^p}$. That is, let
$a_0 < a_1 < \dots < a_n$ be equispaced points in $[a,b]$ defined by
\eqref{eq:subinterval_ends}, that is,
\begin{align}
a_k = a + \frac{k}{n}(b-a), \quad 0 \le k \le n.
\end{align}
Note that $a_0 = a$ and $a_n = b$. We suppose we are given the values of
$f(a_k)$, $0 \le k \le n$, or possibly noisy estimates of these,
and wish to approximate $\|f\|_{V^p}$.

To this end, we introduce some convenient notation.
If $\v$ is a vector in $\R^{n+1}$ and $1 \le p < \infty$, we define the norm
\begin{align}
\| \v \|_{\tau_p}
= \frac{b-a}{2n} \sum_{k=0}^{n-1} (\left|\v[k] \right|^p + \left|\v[k+1] \right|^p)
= \left( \frac{b-a}{n} \sum_{k=1}^{n-1} \left|\v[k] \right|^p
    + \frac{b-a}{2n} \left| \v[0] \right|^p
    + \frac{b-a}{2n} \left| \v[n] \right|^p \right)^{1/p}.
\end{align}
When $p = \infty$, we define $\| \v \|_{\tau_\infty} = \| \v \|_{\ell_\infty}$,
that is,
\begin{align}
\| \v \|_{\tau_\infty} = \max_{0 \le k \le n} \left| \v[k] \right|.
\end{align}
If $f$ is a function on $[a,b]$ and $\bf$ is a vector with entries
$\bf[k] = f(a_k)$, $0 \le k \le n$, then $\| \bf \|_{\tau_p}$ is the
trapezoidal rule approximation to $\|f\|_{L^p}$ when
$1 \le p < \infty$, and
$\| \bf \|_{\tau_\infty}$ is an approximation to $\|f\|_{L^\infty}$.

Suppose $n$ is a positive integer. We define the following discrete
Volterra operator $\Vtrap : \R^{n+1} \to \R^{n+1}$
on a vector $\x$ by $(\Vtrap \x)[0] = 0$, and, for $1 \le k \le n$,
\begin{align}
(\Vtrap \x)[k]
= \frac{b-a}{2n} \sum_{j=0}^{k-1} (\x[j] + \x[j+1]).
\end{align}
We then define the discrete Volterra $p$-norm of $\x$ as
\begin{align}
\| \x \|_{\nu_p} = \| \Vtrap \x \|_{\tau_p}.
\end{align}
The interpretation of this quantity may be understood as follows.
Suppose $f$ is a function on $[a,b]$, and let
$\bf$ be the vector in $\R^{n+1}$ with entries $\bf[k] = f(a_k)$,
for $0 \le k \le n$, where $a_k$ are defined in \eqref{eq:subinterval_ends}.
Then $(\Vtrap \bf)[k]$ is the trapezoidal rule approximation
to $(\V f)(a_k)$, and $\| \bf \|_{\nu_p}$ approximates $\|f\|_{V_p}$.
The approximation error will be bounded in Section \ref{section:discrete}.
We remark that one can also define an approximate Volterra
norm based on $n$ equispaced midpoint samples on $[a,b]$, which will
have the same theoretical guarantees as the trapezoidal rule
approximation considered here.

\section{Properties of Volterra distances}
\label{section:properties}

This section analyzes the use of Volterra distances
for comparing functions on an interval and univariate projections of functions
on $\R^d$. Theorem \ref{thm:projpert} establishes
a property similar to Theorem \ref{thm:wasserstein_projections}
for Wasserstein distances, namely, that the
distance between projections of a function and its
$\epsilon$-deformation is controlled by $\epsilon$.
Notably, however, the bound
for Volterra metrics is a concave, non-linear function of
$\epsilon$, indicating that the Volterra
metrics may be more robust than the Wasserstein distances
to large deformations. The bound in Theorem \ref{thm:projpert}
can be strengthened in certain special cases: Theorem \ref{thm:rotations2D}
analyzes the case of rotations in the plane, and
Theorem \ref{thm:deformation} analyzes the setting
of univariate functions related by a monotonically
increasing deformation.
Theorems \ref{thm:sliced_deformations} and \ref{thm:sliced_rotations2D}
state corresponding results for the sliced Volterra distances.

\begin{thm}
\label{thm:projpert}
Let $A$ and $B$ be  non-empty, bounded, open sets in $\R^d$,
with $r = \diam(A \cup B)$.
Let $F: \R^d \to \R$ be in $L^p$ and supported on $A$,
$\Phi : B \to A$ be an $\epsilon$-deformation,
and $F_\Phi = \Phi^{-1}_\sharp F$, i.e.
\begin{align}
F_\Phi(\x) = F(\Phi(\x)) |\det(\nabla \Phi(\x))|
\end{align}
on $B$, and $0$ elsewhere.
Then for any $\u \in \S^{d-1}$,
\begin{align}
\label{eq:projpert}
\|\calP_\u F - \calP_\u F_\Phi\|_{V^p}
\le \min\left\{ \epsilon   \cdot K_{p,\Phi}(F) \cdot C_{p,d}(r), \
               \epsilon^{1/p} \cdot \|F\|_{L^1}\right\},
\end{align}
where
\begin{align}
K_{p,\Phi}(F) = \min\{\|F\|_{L^p}, \|F_\Phi\|_{L^p}\},
\end{align}
and
\begin{align}
C_{p,d}(r) = 2^{(p-1)/p} \, r^{(d-1)(p-1)/p}.
\end{align}

\end{thm}

\begin{rmk}
Theorem \ref{thm:projpert} states that Volterra distances
between a projection of a function and a projection
of its $\epsilon$-deformation
may be bounded by an increasing function of $\epsilon$.
However, when $p > 1$ the bound on the Volterra
distances becomes either constant or
strictly concave for large $\epsilon$,
indicating that the Volterra distances are more robust
to large deformations.
\end{rmk}

Next, we will consider special cases of the deformation $\Phi$,
for which tighter bounds can be shown.

\subsection{Changes in projection angle}

In this section, $F : \R^2 \to \R$ will denote a function in $L^p(\BB_R)$,
where $\BB_R \subset \R^2$ is the disc of radius $R$ and center $(0,0)$.
For a given angle $\theta$, if $\u = (\cos(\theta),\sin(\theta))$,
we let $f_\theta = \calP_{\u} F$; that is,
\begin{align}
f_\theta(x)
= \int_{\R} F(\cos(\theta)x + \sin(\theta)y,
        \cos(\theta)y - \sin(\theta)x) dy
\end{align}
We then have the following result:
\begin{thm}
\label{thm:rotations2D}
Let $\theta$ and $\varphi$ be real numbers,
with $\delta \equiv |\theta - \varphi| < \pi$.
Then for all $1 \le p  \le \infty$,
\begin{align}
\label{eq:anglebound}
\|f_\theta - f_\varphi\|_{V^p}
&\le (2 \sin(\delta/2))^{1/p} \cdot
        \min\left\{ \delta^{1-1/p} \cdot \|F\|_{L^p} \cdot R^{2-1/p}, \
                   \|F\|_{L^1} \cdot R^{1/p} \right\}.
\end{align}
\end{thm}

The proof of Theorem \ref{thm:rotations2D} may be found in
Section \ref{proof:rotations2D}.

\begin{rmk}
When $p=1$, then scaling the problem
so that $R=1$ and $\|F\|_{L^1} = 1$, Theorem \ref{thm:rotations2D} states
\begin{align}
\|f_\theta - f_\varphi\|_{V^p}
&\le 2 \sin(\delta/2),
\end{align}
which matches the bound for Wasserstein distances from \cite{rao2020wasserstein}.
For all values of $p$, under the same scaling, the first term of the right side of
\eqref{eq:anglebound} yields the upper bound
\begin{align}
\|f_\theta - f_\varphi\|_{V^p} \le \delta \cdot \|F\|_{L^p},
\end{align}
since $2 \sin(|\theta - \varphi|/2) \le \delta$.

\end{rmk}

\subsection{Monotonically increasing deformations}

The bound in Theorem \ref{thm:projpert} can
be sharpened by a constant factor depending on $p$ for univariate functions
in the case where the deformation $\Phi$
is monotonically increasing.

\begin{thm}
\label{thm:deformation}
Suppose $f : \R \to \R$ is in $L^p(I)$, where $I$ is a closed interval.
Let $\Phi: J \to I$ be an $\epsilon$-deformation with $\Phi'(x) > 0$
for all $x$. Let
$f_{\Phi}(x) = (\Phi^{-1}_\sharp f)(x) = f(\Phi(x)) \Phi'(x)$ on $J$,
and $0$ elsewhere. Then
\begin{align}
\label{eq:bound1D}
\|f - f_{\Phi}\|_{V^p}
    \le \min\left\{\epsilon \cdot K_{p,\Phi}(f) ,\,
                   \epsilon^{1/p}\cdot \|f\|_{L^1} \right\},
\end{align}
where
\begin{align}
K_{p,\Phi}(f) = \min\{\|f\|_{L^p}, \|f_\Phi\|_{L^p}\}.
\end{align}
\end{thm}

\begin{rmk}
Note that the factor of $2^{(p-1)/p}$ from
Theorem \ref{thm:projpert} is not present in Theorem \ref{thm:deformation},
due to the fact that $\Phi$ is increasing.
To see that the monotonocity of $\Phi$ is required
for this sharper bound, consider the following example.
Fix $\eta > \delta > 0$, and let $f$ be defined by
\begin{align}
f(x)=
\begin{cases}
1 , &\text{ if } -\eta \le x < 0, \\
-1 , &\text{ if } 0 \le x \le \eta, \\
0 , &\text{ otherwise}.
\end{cases}
\end{align}
Let $\Phi : [-\delta,\delta] \to [-\eta,\eta]$
be defined by $\Phi(x) = -(\eta/\delta) x$.
Then
\begin{align}
f_\Phi(x)=
\begin{cases}
-\eta/\delta , &\text{ if } -\delta \le x < 0, \\
\eta/\delta , &\text{ if } 0 \le x \le \delta, \\
0 , &\text{ otherwise}.
\end{cases}
\end{align}
Then it is straightforward to verify that $\epsilon = \eta + \delta$,
$K_{p,\Phi}(f) = 1$, and
$\|f - f_\Phi\|_{V^\infty} = 2\eta$.
Hence, the  right side of \eqref{eq:bound1D},
with $p=\infty$, is $\epsilon = \eta + \delta$,
which is not bigger than
$\|f - f_\Phi\|_{V^\infty} = 2\eta$.
\end{rmk}

\subsection{Applications to sliced Volterra distances}

The bounds from Theorems \ref{thm:projpert} and \ref{thm:rotations2D}
immediately yield bounds for the sliced Volterra
distances.

\begin{thm}
\label{thm:sliced_deformations}

Let $A$ and $B$ be non-empty, bounded, open sets in $\R^d$,
with $r = \diam(A \cup B)$.
Let $F: \R^d \to \R$ be in $L^p(A)$,
$\Phi : B \to A$ be an $\epsilon$-deformation,
and $F_\Phi = \Phi^{-1}_\sharp F$, i.e.
\begin{align}
F_\Phi(\x) = F(\Phi(\x)) |\det(\nabla \Phi(\x))|
\end{align}
on $B$, and $0$ elsewhere.
Let $1 \le p \le \infty$ and
fix any probability distribution $\eta$ over $\S^{d-1}$.
Then
\begin{align}
\meanvolt_{p,\eta}(F,F_\Phi) \le
\min\left\{ \epsilon   \cdot K_{p,\Phi}(F) \cdot C_{p,d}(r), \
               \epsilon^{1/p} \cdot \|F\|_{L^1} \right\},
\end{align}
where
\begin{align}
K_{p,\Phi}(F) = \min\{\|F\|_{L^p}, \|F_\Phi\|_{L^p}\},
\end{align}
and
\begin{align}
C_{p,d}(r) = 2^{(p-1)/p} \, r^{(d-1)(p-1)/p}.
\end{align}

\end{thm}

\begin{thm}
\label{thm:sliced_rotations2D}
Let $F: \R^2 \to \R$ be in $L^p(\BB_R)$.
Suppose $0 \le \delta < \pi$, and define $F_\delta$ by
\begin{align}
F_\delta(x,y) = F(x \cos(\delta) + y \sin(\delta), y \cos(\delta) - x \sin(\delta)).
\end{align}
Then for all $1 \le p \le \infty$ and
any probability distribution $\eta$ over $\S^{d-1}$,
\begin{align}
\meanvolt_{p,\eta}(F,F_\delta)
&\le (2 \sin(\delta/2))^{1/p} \cdot
        \min\left\{ \delta^{1-1/p} \cdot \|F\|_{L^p} \cdot R^{2-1/p}, \
                   \|F\|_{L^1}  \cdot R^{1/p} \right\}.
\end{align}
\end{thm}

\section{Asymptotic behavior of the discrete norms}
\label{section:discrete}

In this section, we consider the behavior of
the discrete Volterra norms, defined in Section \ref{sec:trapezoidal},
for vectors consisting of samples of a function $f$ on $[a,b]$
from an equispaced grid.
It will be convenient to introduce some notation.
Let $n$ be a positive integer, and define 
$a_0 < a_1 < \dots < a_n$ as in \eqref{eq:subinterval_ends},
namely
\begin{align}
a_k = a + \frac{k}{n}(b-a), \quad 0 \le k \le n.
\end{align}
Note that $a_0 = a$ and $a_n = b$. Let $\bfend$ be the vector
in $\R^{n+1}$ with entries $\bfend[k] = f(a_k)$, for $0 \le k \le n$.

Denote the mean of $f$ on $[a,b]$ by
\begin{align}
\mu(f) = \frac{1}{b-a} \int_{a}^{b} f(t) dt,
\end{align}
and let $\fcen(x) = f(x) - \mu(f)$.

If $\w$ is a vector in $\R^{n+1}$, denote its trapezoidal mean by
\begin{align}
\mean(\w)
= \frac{1}{2n} \sum_{k=0}^{n-1} (\w[k] + \w[k+1]),
\end{align}
and let $\wcen$ in $\R^{n+1}$ have entries $\wcen[k] = \w[k] - \mean(\w)$.

\subsection{Convergence rates for well-behaved functions}
\label{sec:sample_limit}

We first prove rates on the convergence of the discrete approximation
$\|\bf\|_{\nu_p}$ to the Volterra norm $\|f\|_{V^p}$,
where $f$ is a reasonably well-behaved function.
Theorem \ref{thm:convergence_lipschitz} establishes a covergence rate
of $O(1/n)$ for piecewise Lipschitz functions, whereas
Theorem \ref{thm:convergence_smooth} establishes the faster rate
of $O(1/n^2)$ for smoother functions.

\begin{thm}
\label{thm:convergence_lipschitz}
Suppose $a=c_0 < c_1 < \dots < c_r = b$, and
$f$ has Lipschitz constant bounded by $L > 0$ on each interval $(c_j,c_{j+1})$,
$0 \le j \le r-1$.
Then for all $1 \le p \le \infty$,
\begin{align}
\label{eq:discrete_lipschitz}
\left| \| \bf \|_{\nu_p} -  \|f\|_{V^p} \right|
\le C \frac{(b-a)^{1+1/p}}{n} \left( L (b-a) + r\|f\|_{L^\infty}\right),
\end{align}
where $C>0$ is a universal constant.
The same bound holds by replacing $f$ with $\fcen$
and $\bf$ with $\bfcen$.
\end{thm}

In other words, for piecewise Lipschitz functions,
the discrete Volterra norm based on $n$ subintervals
converges to the true Volterra
norm at a rate of $O(1/n)$.

The proof of Theorem \ref{thm:convergence_lipschitz}
may be found in Section \ref{proof:convergence_lipschitz}.
If instead of being merely piecewise Lipschitz,
the function $f$ is $C^2$ and not too oscillatory, then
the discrete Volterra norms give a higher order approximation
to the Volterra norms of $f$:

\begin{thm}
\label{thm:convergence_smooth}
Suppose $f$ is a two times continuously differentiable function on $[a,b]$
with only finitely many zero crossings.
Let $1 \le p \le \infty$.
Then for all $n$ sufficiently large,
\begin{align}
\left| \|\bf\|_{\nu_p} - \|f\|_{V^p}\right| \le C \frac{K(f,p,a,b)}{n^2},
\end{align}
where
\begin{align}
K(f,p,a,b)=
(b-a)^{3+1/p}\|f''\|_{L^\infty} +  (b-a)^{2+1/p} \|f'\|_{L^\infty}
            + (b-a)^2 |f(b)|\left(\frac{|\mu(f)|}{\|f\|_{V^p}} \right)^{p-1}
\end{align}
when $1 \le p < \infty$, and
\begin{align}
K(f,\infty,a,b) = (b-a)^3\|f''\|_{L^\infty} + (b-a)^2\|f'\|_{L^\infty},
\end{align}
and where $C > 0$ is a universal constant.

An analogous bound holds by replacing $f$ with $\fcen$
and $\bf$ with $\bfcen$:
\begin{align}
\left| \|\bfcen\|_{\nu_p} - \|\fcen\|_{V^p}\right|
    \le C \frac{(b-a)^{3+1/p}\|f''\|_{L^\infty} +  (b-a)^{2+1/p} \|f'\|_{L^\infty}}{n^2},
\end{align}
for all $1 \le p \le \infty$.

\end{thm}

In other words, for such functions $f$,
the discrete Volterra norm based on $n$ subintervals
converges to the true Volterra
norm at a rate of $O(1/n^2)$.
The proof of Theorem \ref{thm:convergence_smooth}
may be found in Section \ref{proof:convergence_smooth}.

\subsection{Robustness to Gaussian noise}
\label{sec:noise}

Next, we show that the discrete Volterra metrics are robust to additive noise.
More precisely, as the number $n$ of subintervals on which samples are taken grows,
the effects of additive Gaussian noise on the samples of $f$
vanish at a predictable rate.

\begin{thm}
\label{thm:noise}
Let $\sigma_0,\sigma_1,\dots,\sigma_n,\dots$
be a sequence of positive numbers, and let
$Z = (Z[0],\dots,Z[n])$, where $Z[0],Z[1],\dots,Z[n],\dots$
are independent with $Z[j] \sim N(0,\sigma_j^2)$.
Suppose too that $\sigma > 0$ satisfies
\begin{align}
\frac{1}{n} \sum_{j=1}^{n} \sigma_j^2
\le \sigma^2,
\end{align}
for all $n$.
Let  $t > 0$. Then for all $1 \le p \le \infty$,
\begin{align}
\label{eq:concentration_bound}
\Prob\left\{ \| Z \|_{\nu_p} \ge t \right\}
\le A e^{-B t^2 n / \sigma^2},
\end{align}
where $A > 0$ and $B > 0$ are universal constants;
\begin{align}
\label{eq:as_limit}
\lim_{n \to \infty} \| Z \|_{\nu_p} = 0
\end{align}
almost surely; and
\begin{align}
\label{eq:expectation}
\EE \| Z \|_{\nu_p}
\le C \frac{\sigma}{\sqrt{n}},
\end{align}
where $C > 0$ is a universal constant.
Furthermore, \eqref{eq:concentration_bound}, \eqref{eq:as_limit}
and \eqref{eq:expectation} hold with $Z$ replaced by $Z_\cen$.
\end{thm}

\begin{cor}
\label{cor:convergence}
Suppose $f$ satisfies the conditions of Theorem \ref{thm:convergence_lipschitz}, $Z$ satisfies the conditions of Theorem \ref{thm:noise}, and $Y = \bf + Z$.
Let  $t > 0$ and $1 \le p \le \infty$. Then for all $n$ sufficiently large,
\begin{align}
\label{eq:concentration2}
\Prob\left\{ \left| \| Y \|_{\nu_p} - \|f\|_{V^p}\right| \ge t \right\} \le A e^{-B t^2 n / \sigma^2},
\end{align}
where $A > 0$ and $B > 0$ are universal constants;
\begin{align}
\label{eq:as_limit2}
\lim_{n \to \infty} \| Y \|_{\nu_p} = \|f\|_{V^p}
\end{align}
almost surely; and
\begin{align}
\label{eq:expectation2}
\EE \left|\| Y \|_{\nu_p} - \| f \|_{V^p} \right| \le C \frac{\sigma}{\sqrt{n}},
\end{align}
where $C > 0$ is a universal constant.
Furthermore, \eqref{eq:concentration2}, \eqref{eq:as_limit2} and \eqref{eq:expectation2}
hold with $f$ replaced by $\fcen$ and $Y$ replaced by $Y_\cen$.
\end{cor}

The  proofs of Theorem \ref{thm:noise} and Corollary \ref{cor:convergence} are provided in Section \ref{proof:noise}.

\begin{rmk}
In the setting of Corollary \ref{cor:convergence},
both the signal vector $\bf$ and the noise vector $Z$ have comparable $p$-norms;
consequently, $\|Y\|_{\ell_p}$ does not approach $\|f\|_{L^p}$ as $n \to \infty$.
For example, if $\sigma_j = \sigma$ for all $j$, then almost surely
\begin{align}
\lim_{n \to \infty} \|Y\|_{\ell_2}^2 = \|f\|_{L^2}^2 + \sigma^2.
\end{align}
By contrast, \eqref{eq:as_limit2} states that $Z$ has a negligible effect
on the Volterra norm when $n$ is large.
\end{rmk}

\section{Numerical results}
\label{section:numerical}

In this section, we show the results of numerical experiments
comparing the Volterra distances to Wasserstein
and Lebesgue distances.

Define the approximate Wasserstein distance as follows.
Suppose $f$ is a probability density on $[a,b]$,
and let $\bf = (f(a_0),\dots,f(a_{n}))$,
where 
\begin{align}
a_k = a + \frac{k}{n}(b-a), \quad 0 \le k \le n.
\end{align}
We define, for $0 < t < 1$, the approximate
inverse CDF by
\begin{align}
\what{(\V f)^{-1}}(t) = \min\{a_k : (\Vtrap \bf)[k] \ge t\}.
\end{align}
For integer $m$, we define the midpoint
grid values $t_j = (j - 1/2)/m$, $1 \le j \le m$.
Let $\what{(\Vtrap \bf)^{-1}}[j] = \what{(\V f)^{-1}}(t_j)$.
Then the approximate $p$-Wasserstein distance between densities $f$ and $g$
is defined by
\begin{align}
\what\wass_p(f,g)
= \|\what{(\Vtrap \bf)^{-1}} - \what{(\Vtrap \bg)^{-1}}\|_{\ell_p}s.
\end{align}
In all the experiments reported below, we set $m = n$.

\subsection{Distances under translation}
\label{sec:transl}

We illustrate Theorem \ref{thm:deformation} on the functions
shown in Figure \ref{fig:transl_bumps};
these are translations $f_\epsilon(x) = f(x-\epsilon)$
of the function $f$ on $[0,1]$ defined by
\begin{align}
\label{eq:transl_bump}
f(x) = C  \cos(10x - 1) e^{-16 (10x - 3/2)^2},
\end{align}
where $C$ is chosen so that the integral of $f$ is $1$.
Figure \ref{fig:transl_all} shows the estimated Volterra $p$-distances (top row),
$p$-Wasserstein (middle row), and Lebesgue $p$-distances (bottom row)
based on samples from $n = 500$ subintervals,
plotted as functions of the shift size $\epsilon$.

The Volterra distances exhibit the behavior described by
the bound in Theorem \ref{thm:deformation},
namely, the distances grow as concave functions of the shift size.
When $p=1$ and $p=2$, the distances continue to grow with the translation,
whereas when $p=\infty$ the distances level off,
consistent with the upper bound from Theorem \ref{thm:deformation}.
The Wasserstein distances all increase linearly with the shift size
(which can be seen easily from the Monge formulation of the Wasserstein distance).
By contrast with these behaviors, all of the Lebesgue distances quickly saturate to a constant value, independent of $\epsilon$, as soon as the translation is big enough so that the numerical supports of the translated functions do not overlap.

\begin{figure}
\center
\includegraphics[scale=.5]{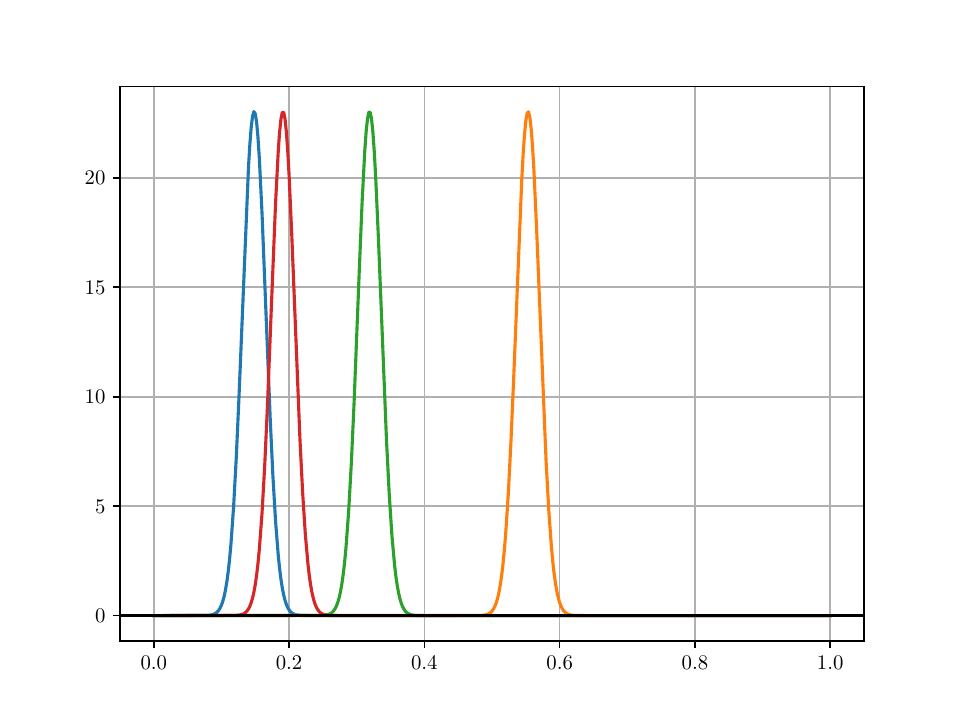}
\caption{The function \eqref{eq:transl_bump} (far left, in blue) and its translations,
used in the experiment from Section \ref{sec:transl}.}
\label{fig:transl_bumps}
\end{figure}

\begin{figure}
\center
\includegraphics[scale=.45]{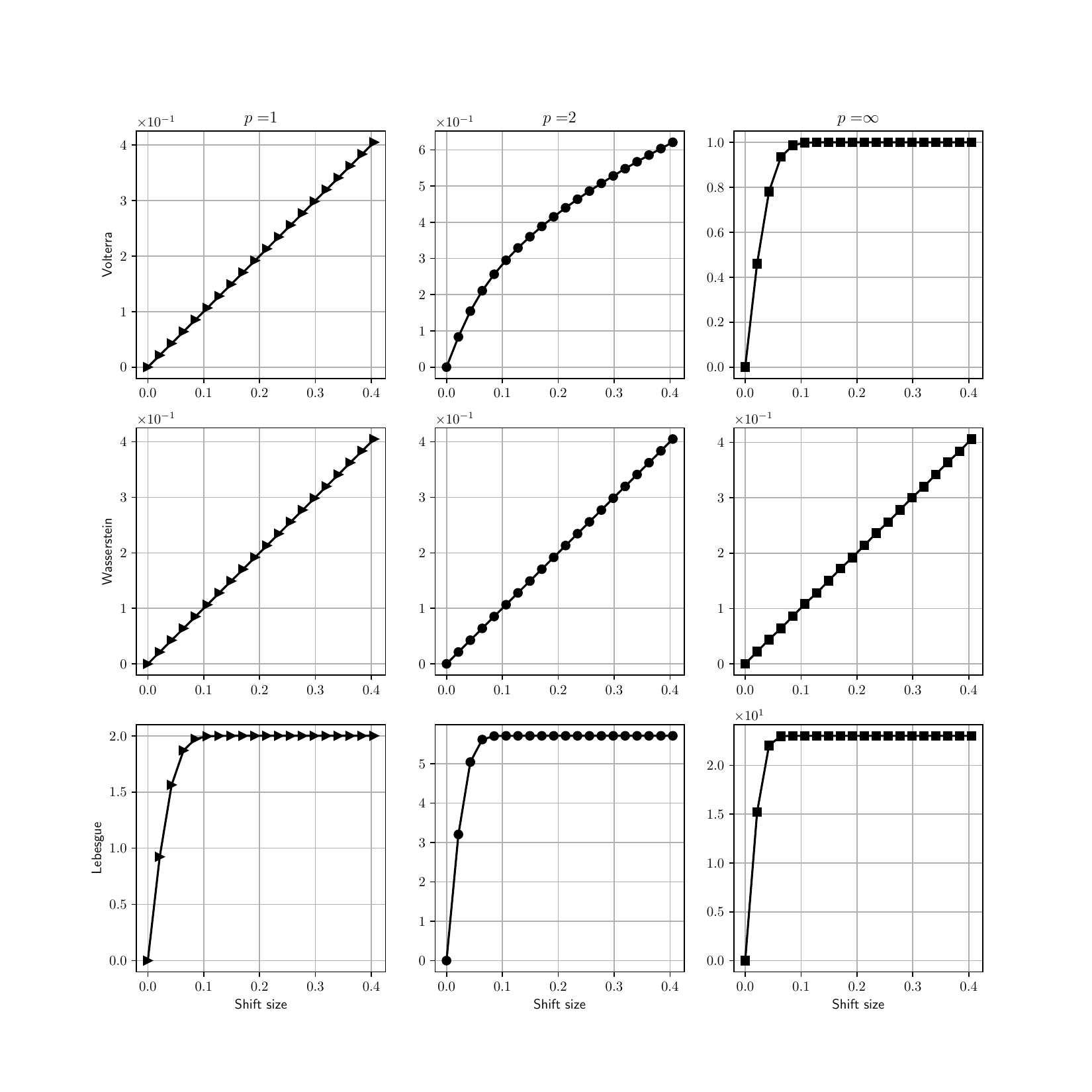}
\caption{The first row shows the approximated Volterra distances
(based on $n=500$ subintervals) between the function \eqref{eq:transl_bump} 
and its shifts, as a function of the shift size.
The second row shows the approximated Wasserstein distances,
and the third row shows the approximated Lebesgue distances.
The values of $p$ (from left to right) are $p=1,2, \infty$.
See Section \ref{sec:transl} for details.
}
\label{fig:transl_all}
\end{figure}

\subsection{Distances under dilation}
\label{sec:dilations}

Next, we consider the function $f$ defined on $[0,1]$ by
\begin{align}
\label{eq:dilation_func}
f(x) = C  x^6(1-x)
\end{align}
where $C$ is chosen so that the integral of $f$ is $1$.
We consider the family of dilations of $f$ parameterized by $\eta \ge 1$;
these are the functions $f_\eta$ defined by
$f_\eta(x) = f(\eta x) \eta$ on $[0,1/\eta]$, and
$f_\eta(x) = 0$ elsewhere.
The size $\epsilon$ of the dilation is 
\begin{align}
\epsilon = 1  - \frac{1}{\eta}.
\end{align}
Figure \ref{fig:dilates_plots} shows the function $f$ and some of its dilates.
Figure \ref{fig:dilates_plots} shows the estimated Volterra $p$-distances (top row),
$p$-Wasserstein (middle row), and Lebesgue $p$-distances (bottom row)
based on samples from $n = 500$ subintervals, plotted as functions of $\epsilon$.

The Volterra distances exhibit the behavior described by
the bound in Theorem \ref{thm:deformation},
namely, the distances grow as concave functions
of the deformation size $\epsilon$.
When $p=1$ and $p=2$,
the distances continue to grow as $\epsilon$ grows,
whereas when $p=\infty$ the distances level off,
consistent with the upper bound from Theorem \ref{thm:deformation}.
The Wasserstein distances all increase linearly with the deformation size
(which can be seen easily from the Monge formulation of the Wasserstein distance).
Because the transformation preserves the integral of $f$,
the $L^1$ distance levels off when the dilation size is big,
since the supports of the function and its dilate are almost disjoint.
By contrast, the $L^2$ and $L^\infty$ distances grow rapidly for large dilation sizes.
This is because $\|f_\eta\|_{L^p}$ diverges as $\eta$ grows,
and hence these distances reflect the size of the individual functions
and not the relationship between the functions.

\begin{figure}
\center
\includegraphics[scale=.5]{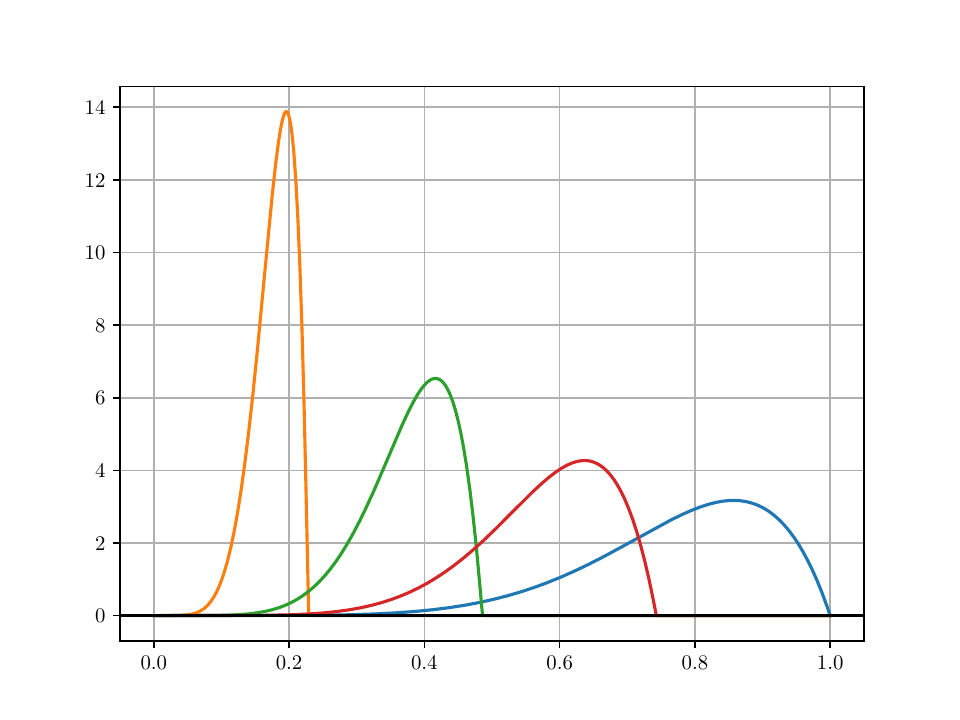}
\caption{The function \eqref{eq:dilation_func} (in blue) and its
dilations, used in the experiment from Section \ref{sec:dilations}.
}
\label{fig:dilates_plots}
\end{figure}

\begin{figure}
\center
\includegraphics[scale=.4]{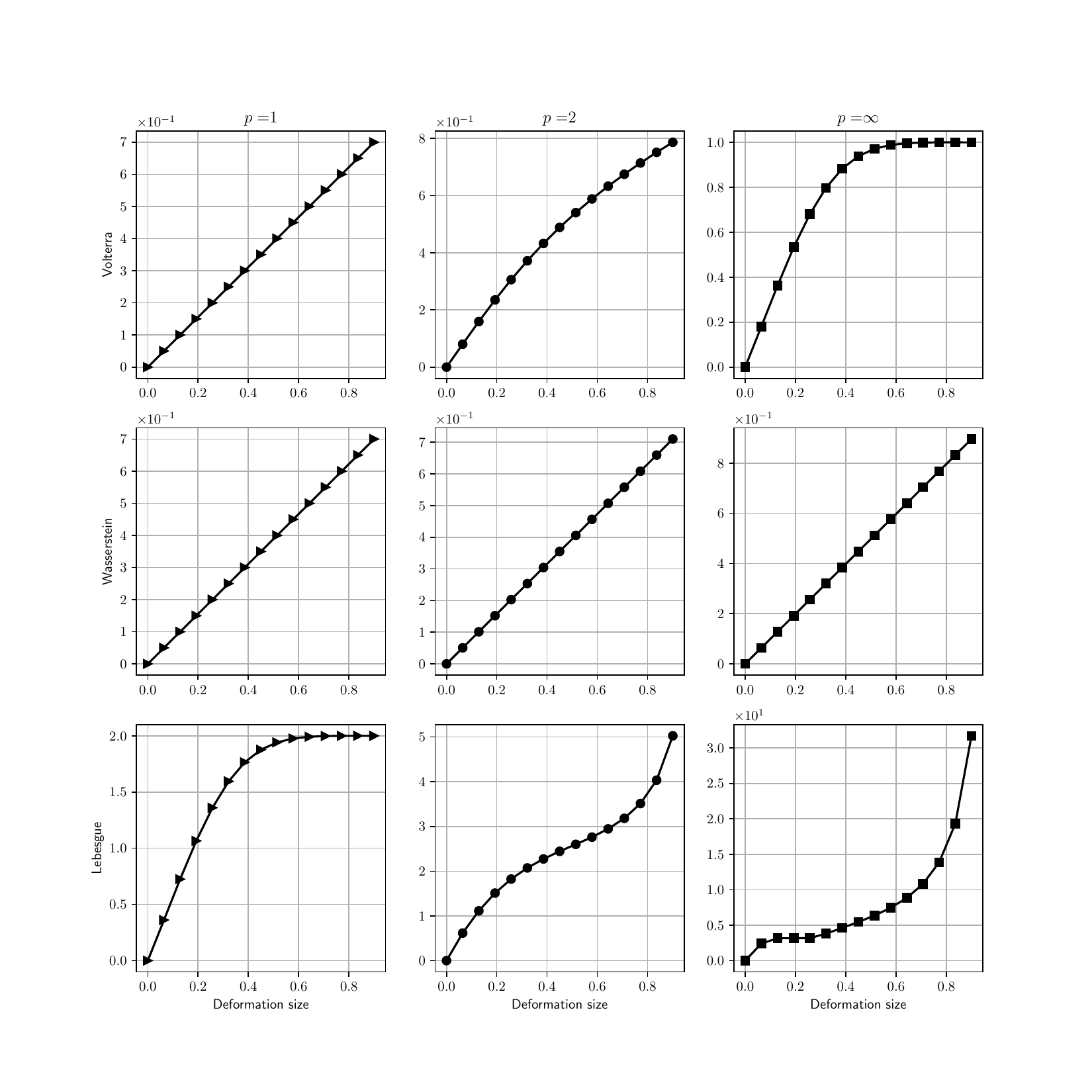}
\caption{The first row shows the approximated Volterra distances
(based on $n=500$ subintervals) between the function \eqref{eq:dilation_func} 
and  its dilates, as a function of the deformation size.
The second row shows the approximated Wasserstein distances,
and the third row shows the approximated Lebesgue distances.
The values of $p$ (from left to right) are $p=1,2, \infty$.
See Section \ref{sec:dilations} for details.
}
\label{fig:dilates_all}
\end{figure}

\subsection{Distances under powers}
\label{sec:powers}

Next, we consider the function $f$ defined on $[0,1]$ by
\begin{align}
\label{eq:powers_func}
f(x) = C  x(1-x)^4,
\end{align}
where $C$ is chosen so that the integral of $f$ is $1$.
We consider the family of deformations $\Phi(x) = x^{\alpha}$,
where $\alpha \ge 1$; the corresponding transformation of $f$
is the function $f_\alpha$ defined by
$f_\alpha(x) = f(x^\alpha) \alpha x^{\alpha-1}$ on $[0,1]$, and
$f_\alpha(x) = 0$ elsewhere.
The deformation size $\epsilon$ is given by
\begin{align}
\epsilon = \left(\frac{1}{\alpha} \right)^{\frac{1}{\alpha-1}}.
\end{align}
Figure \ref{fig:powers_plots} shows the function $f$ and its deformations.
Figure \ref{fig:powers_plots} shows the estimated Volterra $p$-distances (top row),
$p$-Wasserstein (middle row), and Lebesgue $p$-distances (bottom row)
based on samples from $n = 500$ subintervals, plotted as functions of $\epsilon$.

The Volterra distances exhibit the behavior described by
the bound in Theorem \ref{thm:deformation}, namely,
the distances grow as concave functions of the deformation size $\epsilon$.
When $p=1$ and $p=2$, the distances continue to grow as $\epsilon$ grows,
whereas when $p=\infty$ the distances level off,
consistent with the upper bound from Theorem \ref{thm:deformation}.
As in the case of dilations,
because the transformation preserves the integral of $f$,
the $L^1$ distance levels off when the deformation size is big,
since the supports of the function and its deformation are almost disjoint.
On the other hand, the $L^2$ and $L^\infty$ distances grow rapidly for
large powers.

\begin{figure}
\center
\includegraphics[scale=.5]{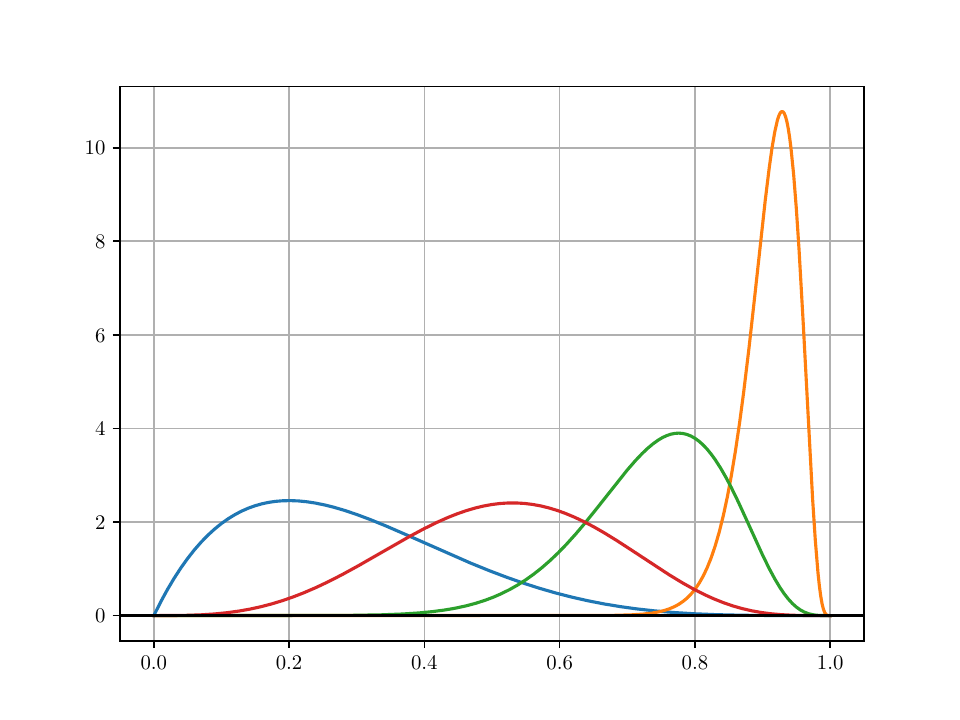}
\caption{The function \eqref{eq:powers_func} (in blue) and its
deformations, used in the experiment from Section \ref{sec:powers}.
}
\label{fig:powers_plots}
\end{figure}

\begin{figure}
\center
\includegraphics[scale=.4]{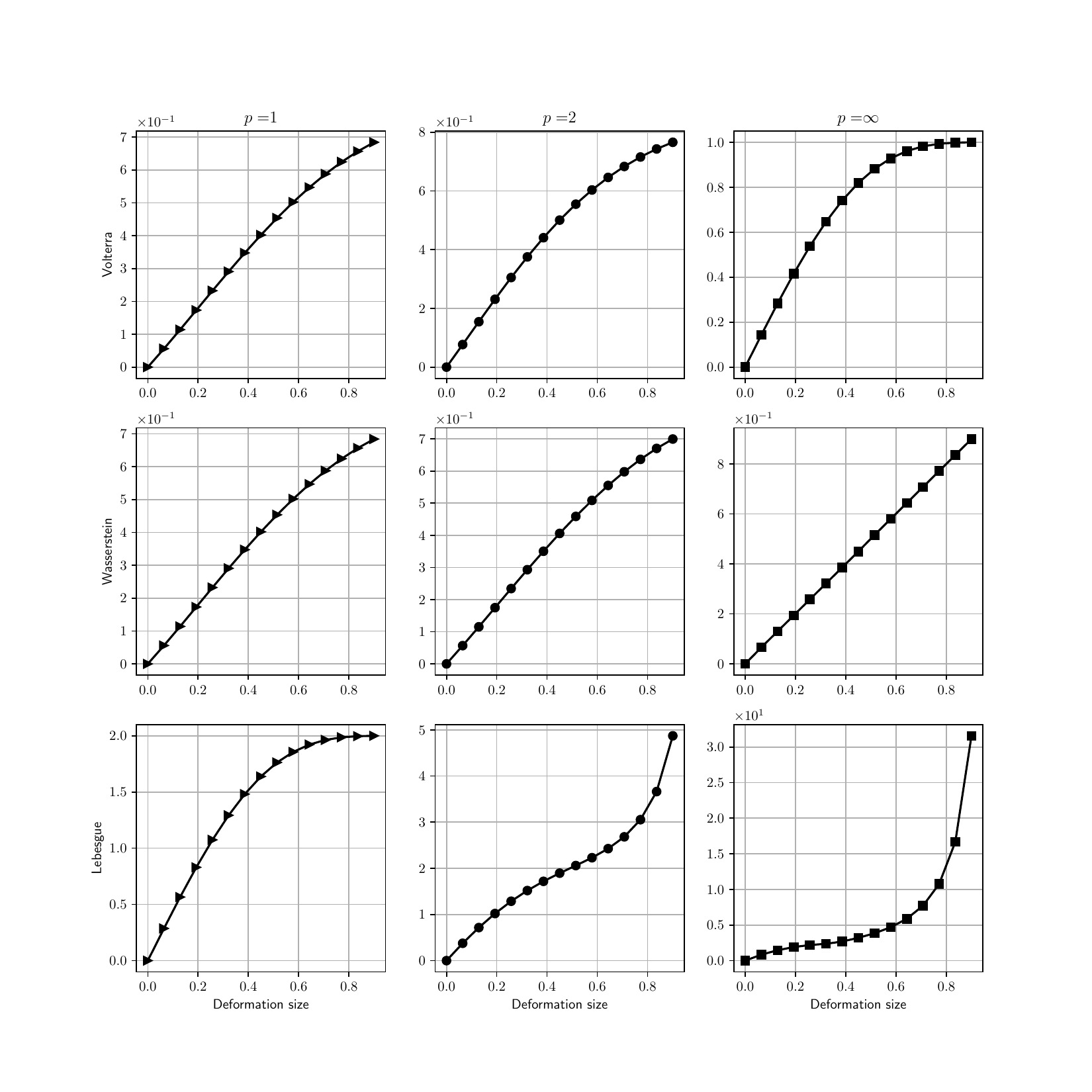}
\caption{The first row shows the approximated Volterra distances
(based on $n=500$ subintervals) between the function \eqref{eq:powers_func} 
and its deformations, as a function of the deformation size.
The second row shows the approximated Wasserstein distances,
and the third row shows the approximated Lebesgue distances.
The values of $p$ (from left to right) are $p=1,2, \infty$.
See Section \ref{sec:powers} for details.
}
\label{fig:powers_all}
\end{figure}

\subsection{Distances between rotated projections I}
\label{sec:rotline}

We illustrate the behavior described by Theorem
\ref{thm:rotations2D} on the function $F$ displayed in Figure \ref{fig:rotline_plots},
given by the formula
\begin{align}
F(\x) = \frac{1}{2 \pi \sigma} \sum_{k=0}^{6} h_k e^{-|\x - \c_k|^2/\sigma},
\end{align}
where $\sigma=1/5000$, $\c_k = (x_k,x_k^2)$ and $x_k = -1/3 + k/9$,
and $h_k = (k+1) / 24$, $0 \le k \le 6$.

We denote by $f$ the projection of $F$ onto the $x$-axis, and $f_\theta$
the projection of $F$ after rotation by $\theta$ radians.
Figure \ref{fig:rotline_plots} shows a heatmap of $F$ and a rotation,
along with their corresponding projections.
Figure \ref{fig:rotline_plots} shows the estimated Volterra $p$-distances (top row),
$p$-Wasserstein (middle row), and Lebesgue $p$-distances (bottom row)
based on samples from $n = 500$ subintervals, plotted as functions of the
rotation angle. When $p=2$, the Volterra and Wasserstein distances
have very similar behavior, and both vary smoothly with respect to the rotation angle
(when $p=1$, Volterra and Wasserstein are the same, as always).
The Lebesgue distances for all $p$, by contrast, are more irregular functions
of the rotation angle.

\begin{figure}
\center
\includegraphics[scale=.3]{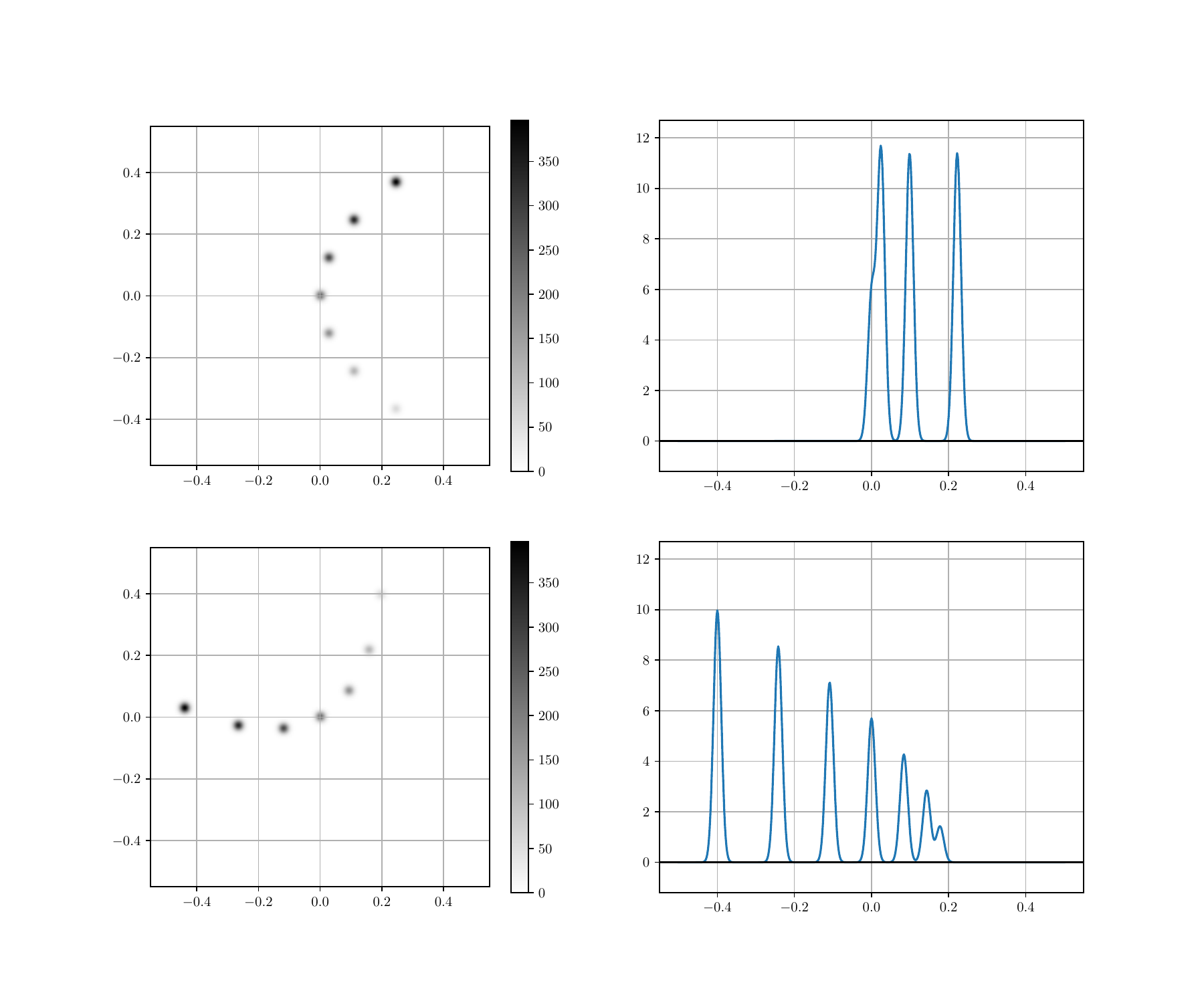}
\caption{The function $F$ from Section \ref{sec:rotline} (top left)
and a rotation  (bottom left),
with their respective projections onto the $x$-axis.
}
\label{fig:rotline_plots}
\end{figure}

\begin{figure}
\center
\includegraphics[scale=.4]{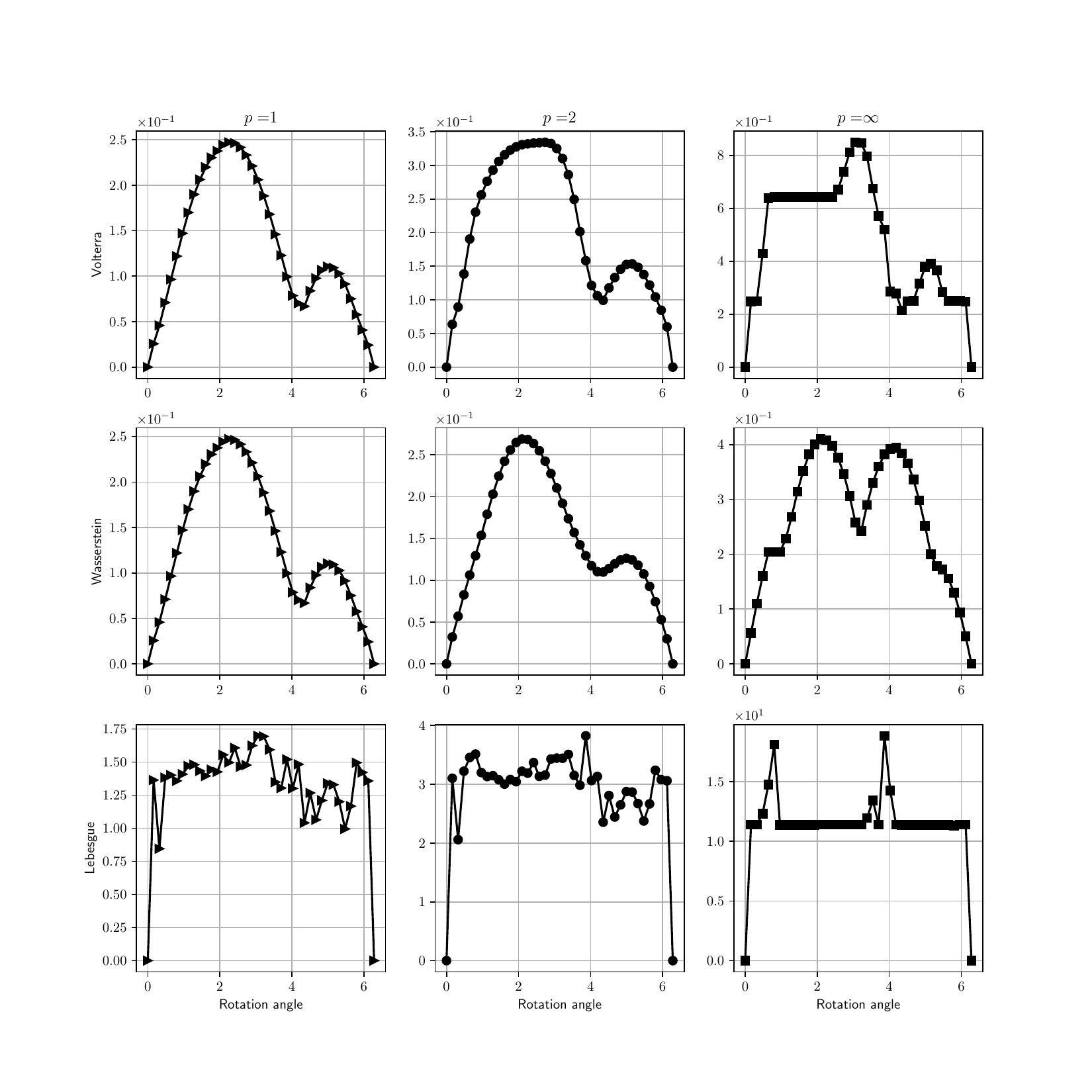}
\caption{The first row shows the approximated Volterra distances
(based on $n=500$ subintervals) between the projections of
$F$ and its rotations, as a function of the rotation angle.
The second row shows the approximated Wasserstein distances,
and the third row shows the approximated Lebesgue distances.
The values of $p$ (from left to right) are $p=1,2, \infty$.
See Section \ref{sec:rotline} for details.
}
\label{fig:rotline_all}
\end{figure}

\subsection{Distances between rotated projections II}
\label{sec:rotrings}

We next illustrate the behavior described by Theorem
\ref{thm:rotations2D} on the function $F$ displayed in Figure \ref{fig:rotrings_plots},
which consists of two nested rings of Gaussian bumps,
given by the formula
\begin{align}
F(\x) = \frac{h}{2 \pi \sigma} \sum_{k=0}^{4} e^{-|\x - \c_k|^2/\sigma}
        + \frac{h}{2 \pi \sigma} \sum_{k=0}^{6} e^{-|\x - \d_k|^2/\sigma}
\end{align}
where $\sigma=1/4000$, $h=1/12$, and $\c_k = (\cos(\theta_k),\sin(\theta_k))$
with $\theta_k = 2k\pi/5 + (\sqrt{2} + \sqrt{5} + \sqrt{3}) \pi$
when $0 \le k \le 4$, and $\d_k = (\cos(\varphi_k),\sin(\varphi_k))$
with $\varphi_k = 2k\pi/7 + (\sqrt{2} + \sqrt{5}) \pi$
when $0 \le k \le 6$.

We denote by $f$ the projection of $F$ onto the $x$-axis, and $f_\theta$
the projection of $F$ after rotation by $\theta$ radians.
Figure \ref{fig:rotrings_plots} shows a heatmap of $F$ and a rotation,
along with their corresponding projections.
Figure \ref{fig:rotrings_plots} shows the estimated Volterra $p$-distances (top row),
$p$-Wasserstein (middle row), and Lebesgue $p$-distances (bottom row)
based on samples from $n = 500$ subintervals, plotted as functions of the
rotation angle. As for the example
from Section \ref{sec:rotline}, when $p=2$, the Volterra and Wasserstein distances
have very similar behavior, and both vary smoothly with respect to the rotation angle,
whereas the Lebesgue distances for all $p$, by contrast, are irregular functions
of the rotation angle.

\begin{figure}
\center
\includegraphics[scale=.3]{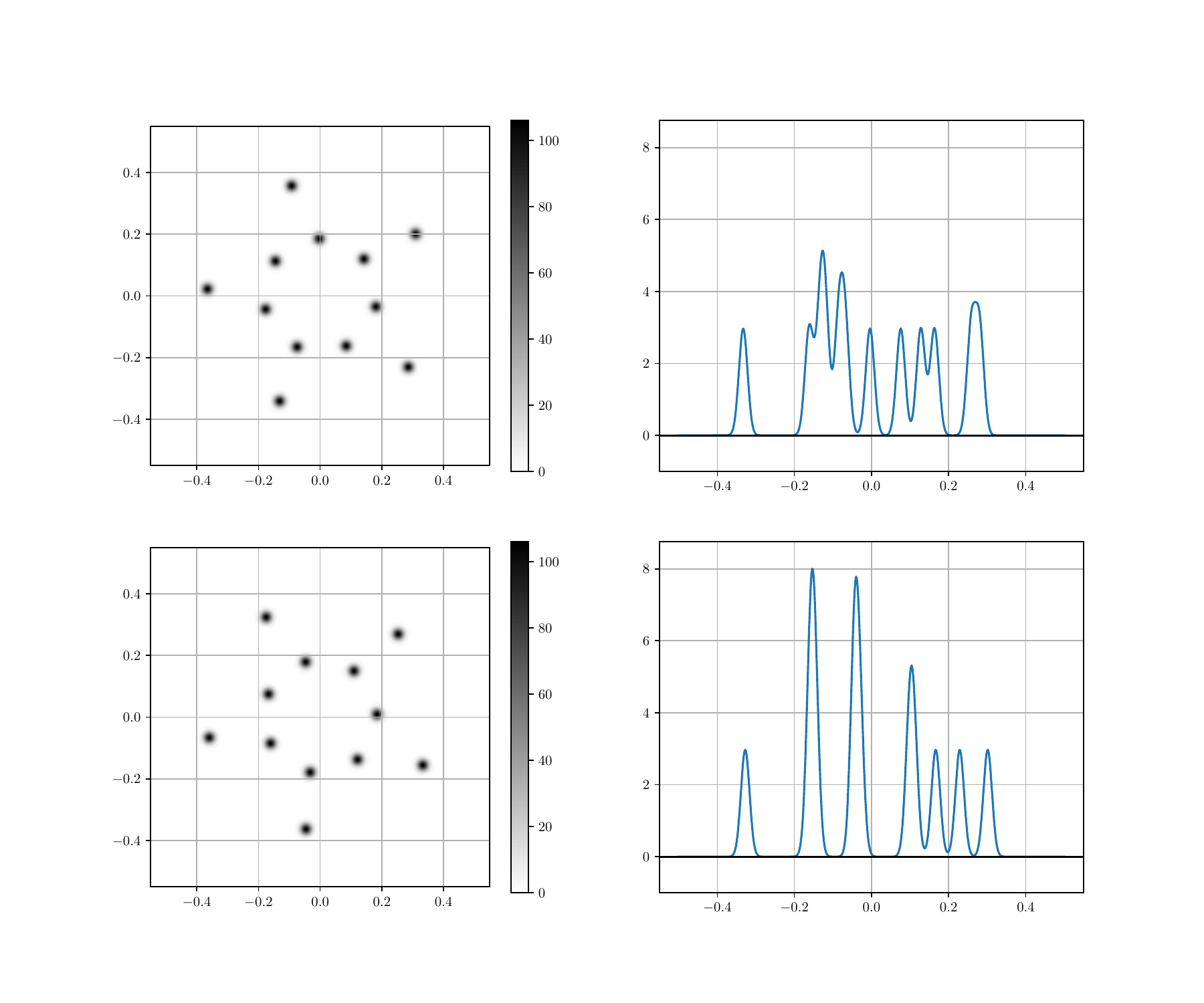}
\caption{The function $F$ from Section \ref{sec:rotrings} (top left) and a rotation (bottom left),
with their respective projections onto the $x$-axis.
}
\label{fig:rotrings_plots}
\end{figure}

\begin{figure}
\center
\includegraphics[scale=.4]{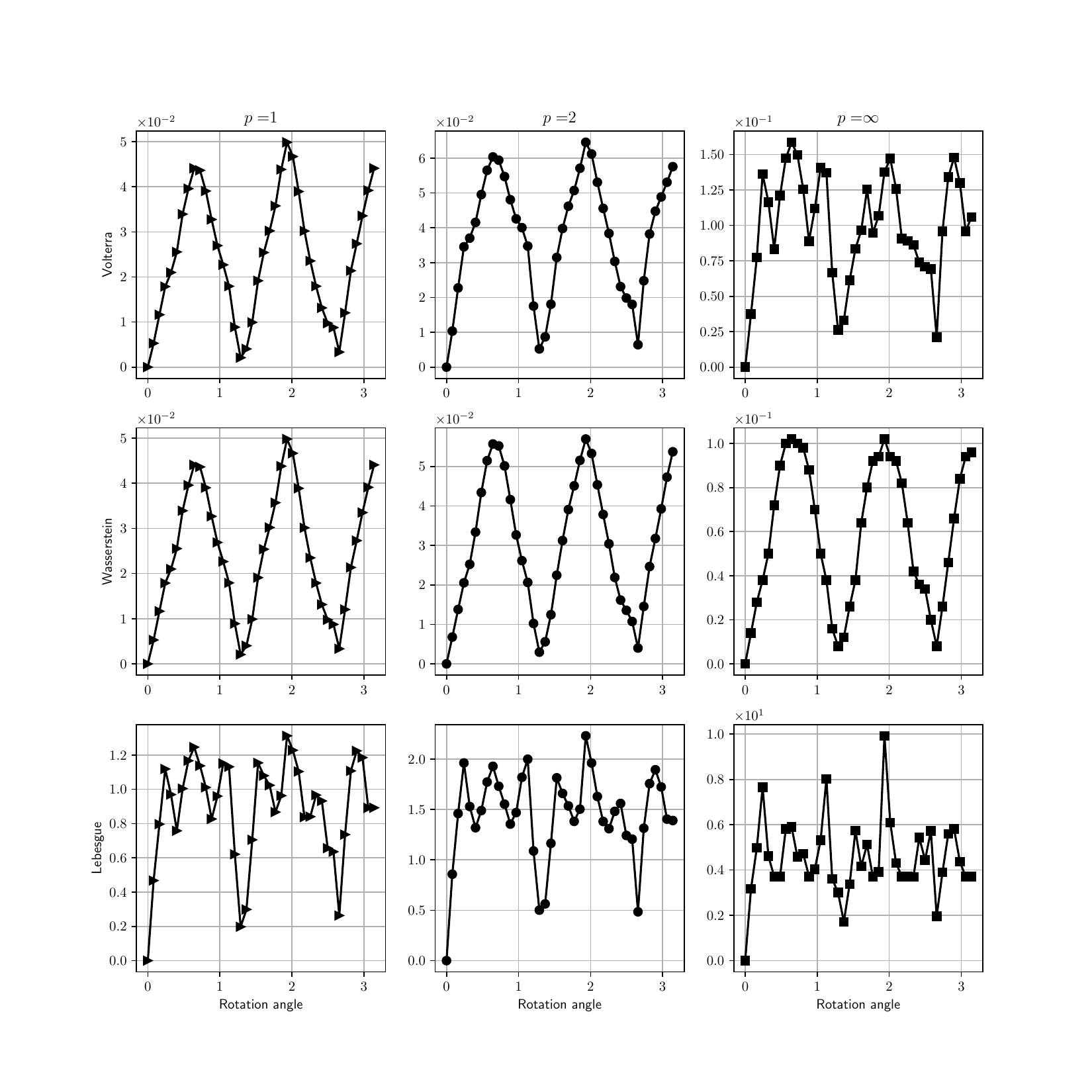}
\caption{The first row shows the approximated Volterra distances
(based on $n=500$ subintervals) between the projections of
$F$ and its rotations, as a function of the rotation angle.
The second row shows the approximated Wasserstein distances,
and the third row shows the approximated Lebesgue distances.
The values of $p$ (from left to right) are $p=1,2, \infty$.
See Section \ref{sec:rotrings} for details.
}
\label{fig:rotrings_all}
\end{figure}

\subsection{Distances under domain shrinking}
\label{sec:shrink}

We next illustrate the behavior of the distances
between projections on the function $F$ displayed in
Figure \ref{fig:shrink_plots},
given by the formula
\begin{align}
F(\x) = \frac{1}{2 \pi \sigma} \sum_{k=0}^{6} h_k e^{-|\x - \c_k|^2/\sigma}
\end{align}
where $\sigma=1/3000$, $h_k=(k+4) / 49$,
and $\c_k = (\cos(\theta_k),\sin(\theta_k))$
with $\theta_k = 2k\pi/7 + (\sqrt{2} + \sqrt{5} + \sqrt{3}) \pi$,
$0 \le k \le 7$.

The function $F$ is transformed by shrinking the center
of each ring towards the origin by an amount $\epsilon$.
Figure \ref{fig:shrink_plots} shows a heatmap of $F$ and a shrunken version,
along with their corresponding projections.
Figure \ref{fig:shrink_plots} shows the estimated Volterra $p$-distances (top row),
$p$-Wasserstein (middle row), and Lebesgue $p$-distances (bottom row)
based on samples from $n = 500$ subintervals, plotted as functions of the
shrinkage parameter (proportional to the distance between the Gaussian centers
of the original function and the shrunken function).
The Wasserstein distances are linear functions of the
distance, since each projected Gaussian of the shrunken function is just a translation
of the projected Gaussian from the original function.
When $p=2$, the Volterra distance is also a smooth but more concave function.
The Lebesgue distances for all $p$, by contrast, are more irregular functions
of the shrinkage parameter.

\begin{figure}
\center
\includegraphics[scale=.3]{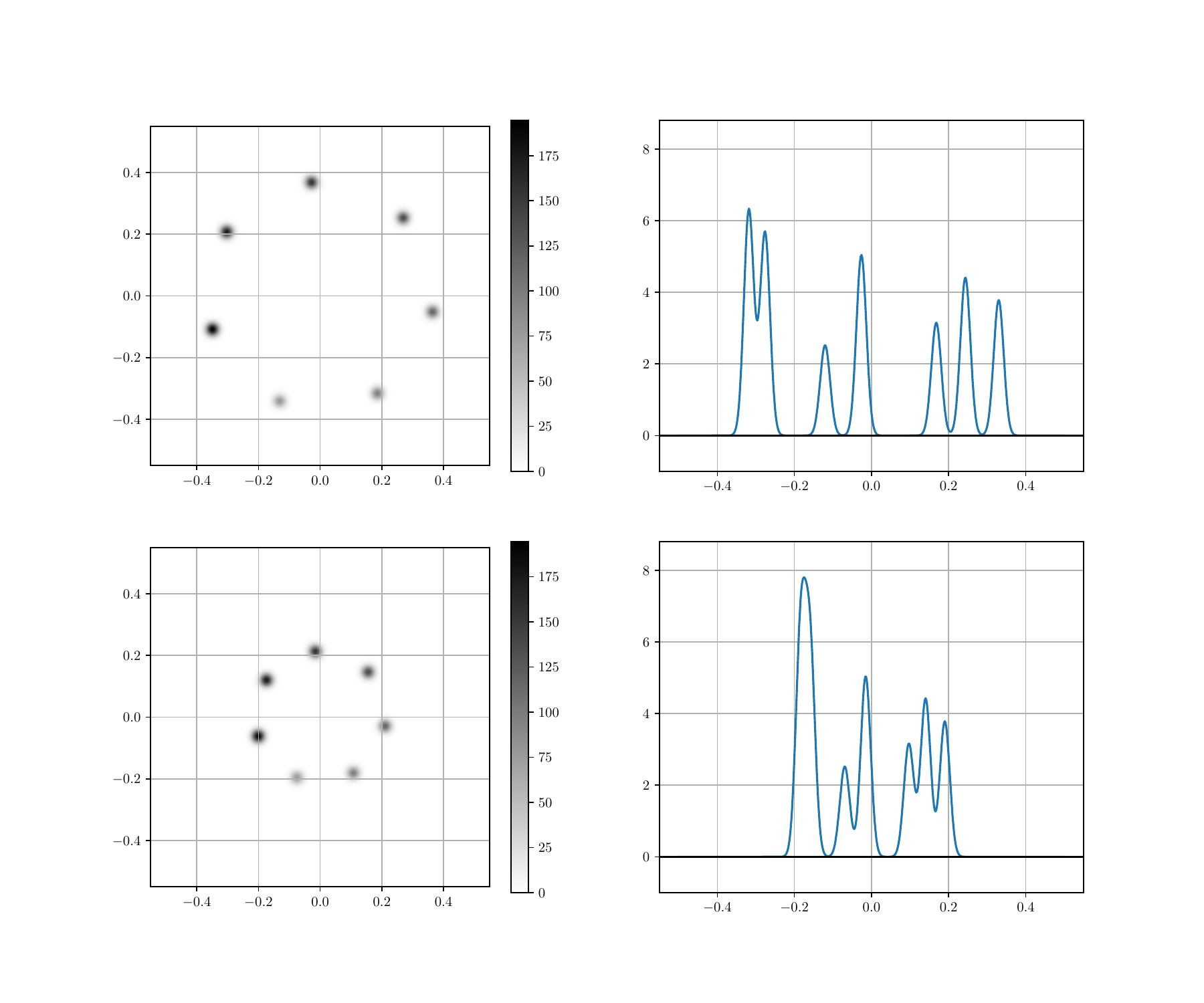}
\caption{The function $F$ from Section \ref{sec:shrink} (top left)
and a shrunken function (bottom left),
with their respective projections onto the $x$-axis.
}
\label{fig:shrink_plots}
\end{figure}

\begin{figure}
\center
\includegraphics[scale=.4]{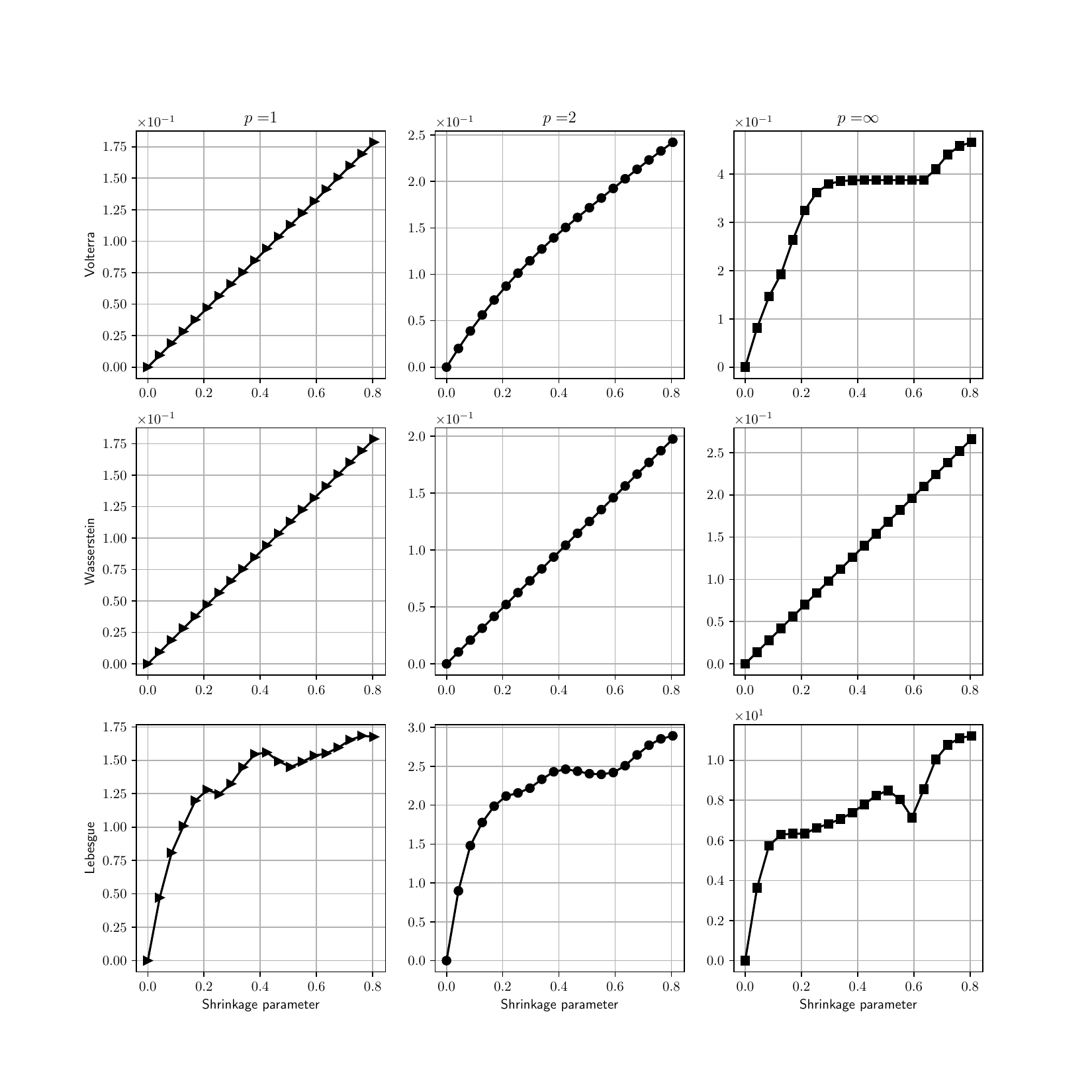}
\caption{The first row shows the approximated Volterra distances
(based on $n=500$ subintervals) between the projections of
$F$ and its shrunken versions, as a function of the shrinkage
(proportional to the distance between the Gaussian centers
of the original function and the shrunken function).
The second row shows the approximated Wasserstein distances,
and the third row shows the approximated Lebesgue distances.
The values of $p$ (from left to right) are $p=1,2, \infty$.
See Section \ref{sec:shrink} for details.
}
\label{fig:shrink_all}
\end{figure}

\subsection{Distances under domain squashing}
\label{sec:squash}

We next illustrate the behavior of the distances
between projections on the function $F$ displayed in
Figure \ref{fig:squash_plots},
given by
\begin{align}
F(\x) = \frac{h}{2 \pi \sigma} \sum_{k=0}^{19} e^{-|\x - \c_k|^2/\sigma}
\end{align}
where $\sigma=1/5000$, $h=1/20$,
and $\c_k = (\cos(\theta_k),\sin(\theta_k))$
with $\theta_k = k\pi/10 + (\sqrt{2} + \sqrt{5} + \sqrt{3}) \pi$,
$0 \le k \le 19$.

The function $F$ is transformed by squashing
the ring, mapping each center $(x,y)$
to $(\lambda x, y/\lambda)$, and then rotating
the result by $\pi/4$.
Figure \ref{fig:squash_plots} shows a heatmap of $F$ and a squashed version,
along with their corresponding projections.
Figure \ref{fig:squash_plots} shows the estimated Volterra $p$-distances (top row),
$p$-Wasserstein (middle row), and Lebesgue $p$-distances (bottom row)
based on samples from $n = 500$ subintervals, plotted as functions of the
squashing parameter $\lambda$.
When $p=2$, the Volterra and Wasserstein distances
have very similar behavior, and both vary smoothly with respect to the
distortion
(when $p=1$, Volterra and Wasserstein are the same, as always).
In this example, unlike previous examples, the $\infty$-Volterra distance
appears to vary more smoothly (for $\lambda$ close to $1$) than the
Wasserstein $\infty$-distance.
As in the other examples, the Lebesgue distances for all $p$ are irregular functions
of the distortion.

\begin{figure}
\center
\includegraphics[scale=.3]{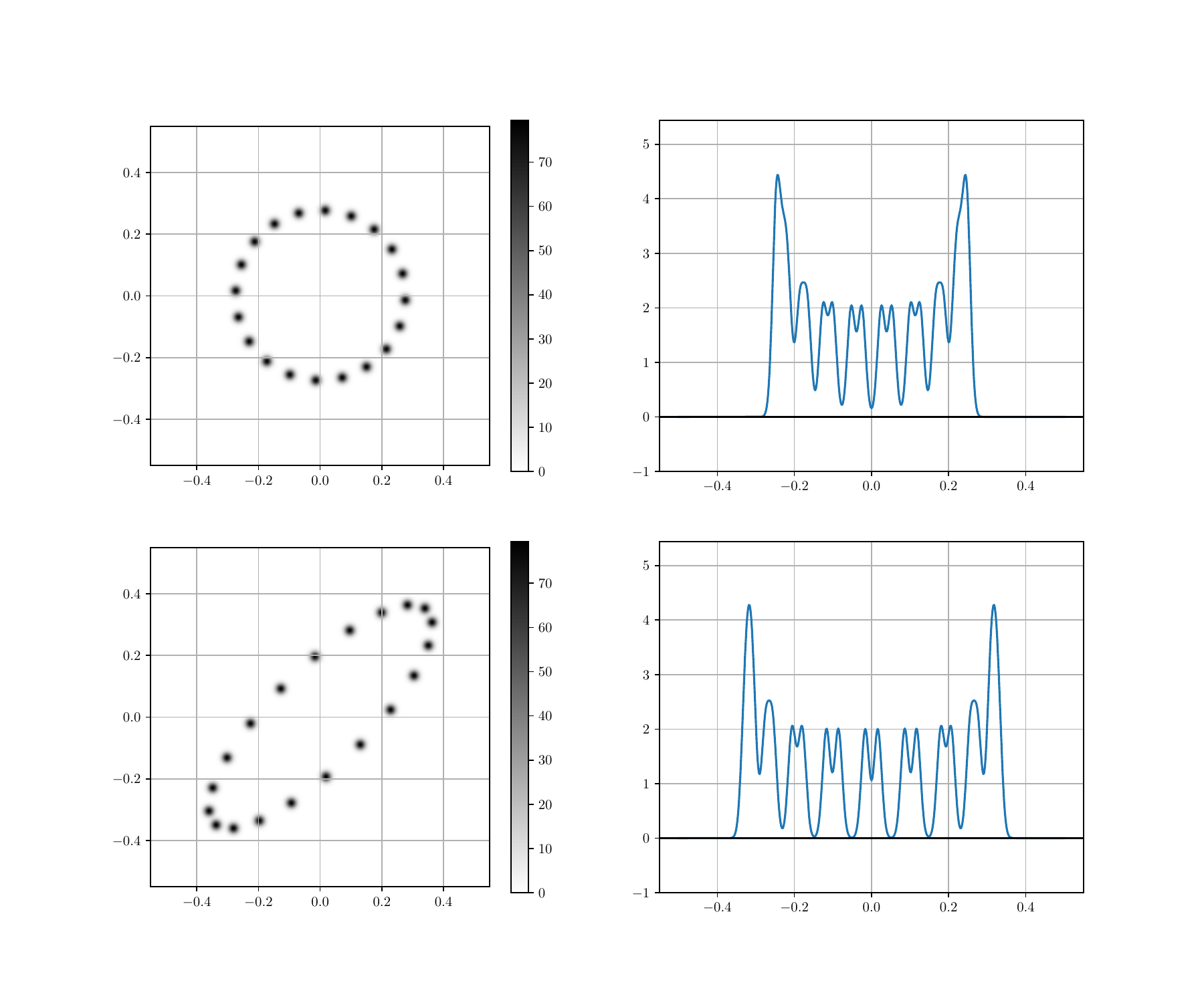}
\caption{The function $F$ from Section \ref{sec:squash} (top left)
and a squashed function (bottom left),
with their respective projections onto the $x$-axis.
}
\label{fig:squash_plots}
\end{figure}

\begin{figure}
\center
\includegraphics[scale=.4]{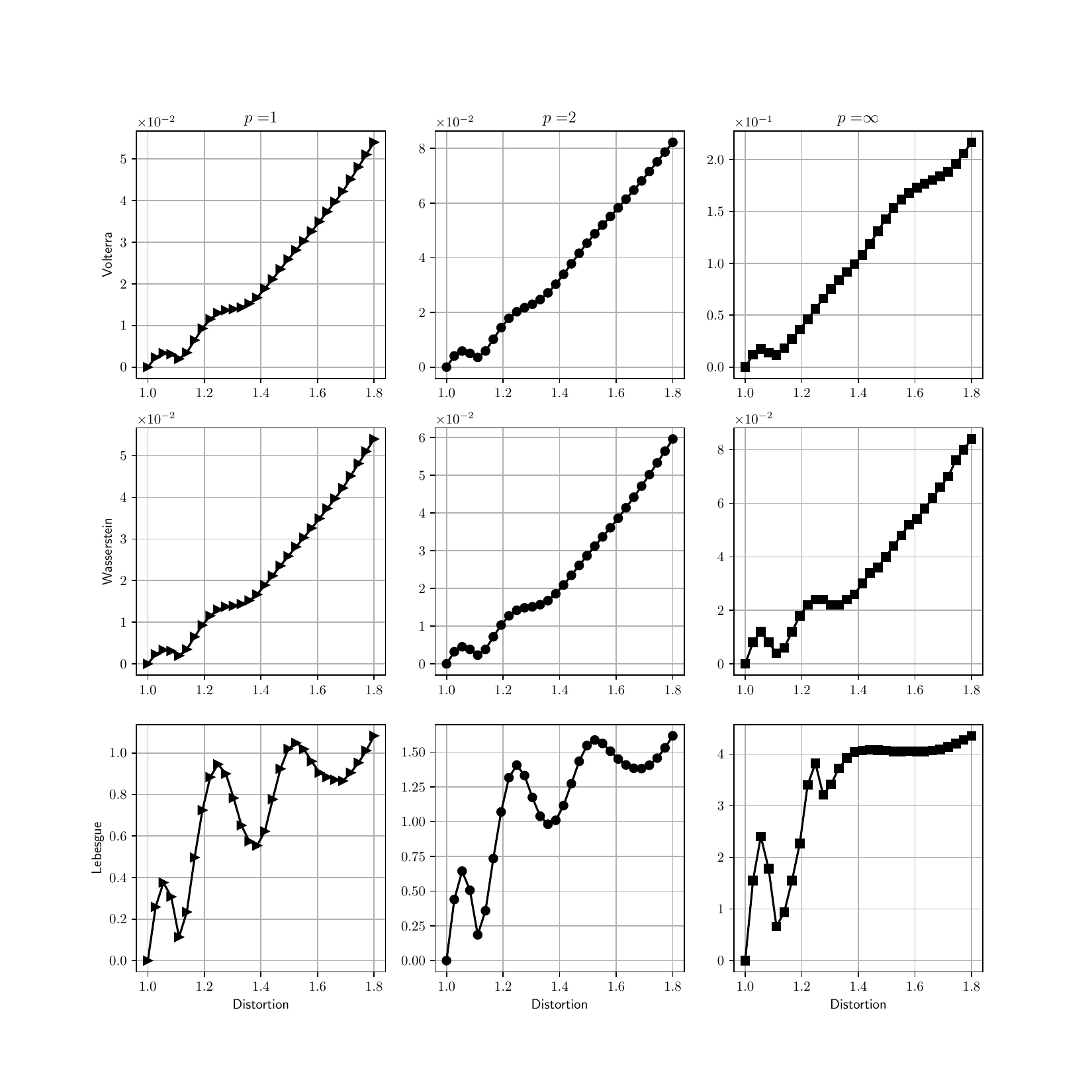}
\caption{The first row shows the approximated Volterra distances
(based on $n=500$ subintervals) between the projections of
$F$ and its squashed versions, as a function of the distortion.
The second row shows the approximated Wasserstein distances,
and the third row shows the approximated Lebesgue distances.
The values of $p$ (from left to right) are $p=1,2, \infty$.
See Section \ref{sec:squash} for details.
}
\label{fig:squash_all}
\end{figure}

\subsection{Robustness to noise I}
\label{sec:noise}

To demonstrate the robustness of the Volterra norms under noise described
by Corollary \ref{cor:convergence},
we run the following experiment.
For different values of $n$,
we take a vector $\bf$ of $n+1$ equispaced samples from the function $f$ on $[-1,1]$ defined by
\begin{align}
f(x) = x e^{-x^2 / 4};
\end{align}
A vector $Z$ of iid Gaussian noise with variance $.01$
is then added to each sample;
let $Y = \bf + Z$.
A plot of a realization of $Y$, when $n=512$,
is shown in the left panel of Figure \ref{fig:noise}.

\begin{figure}
\center
\includegraphics[scale=.5]{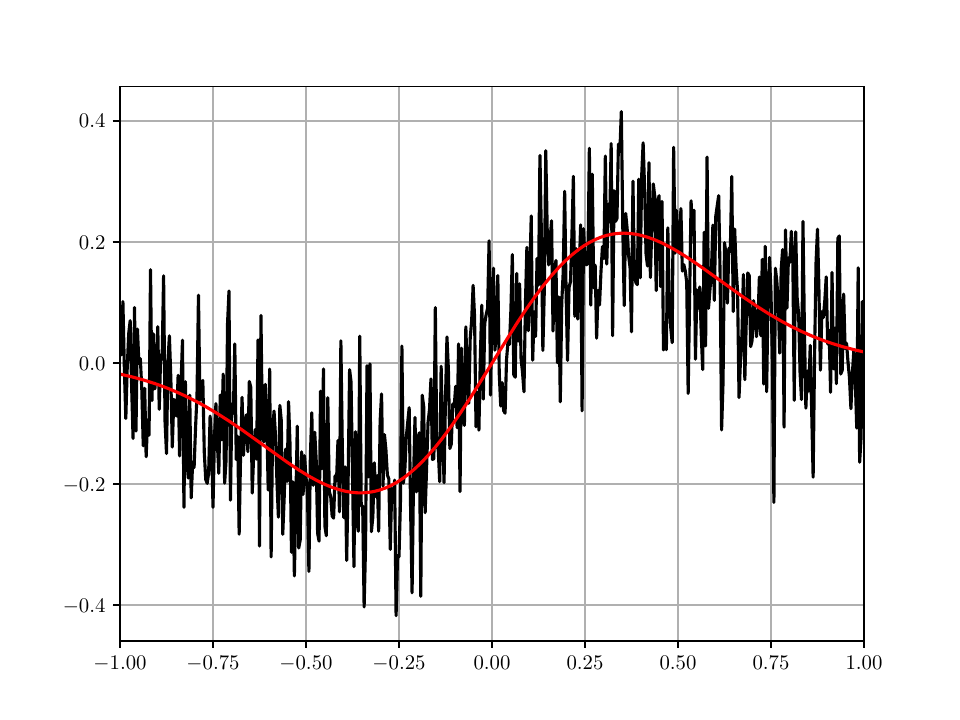}
\includegraphics[scale=.5]{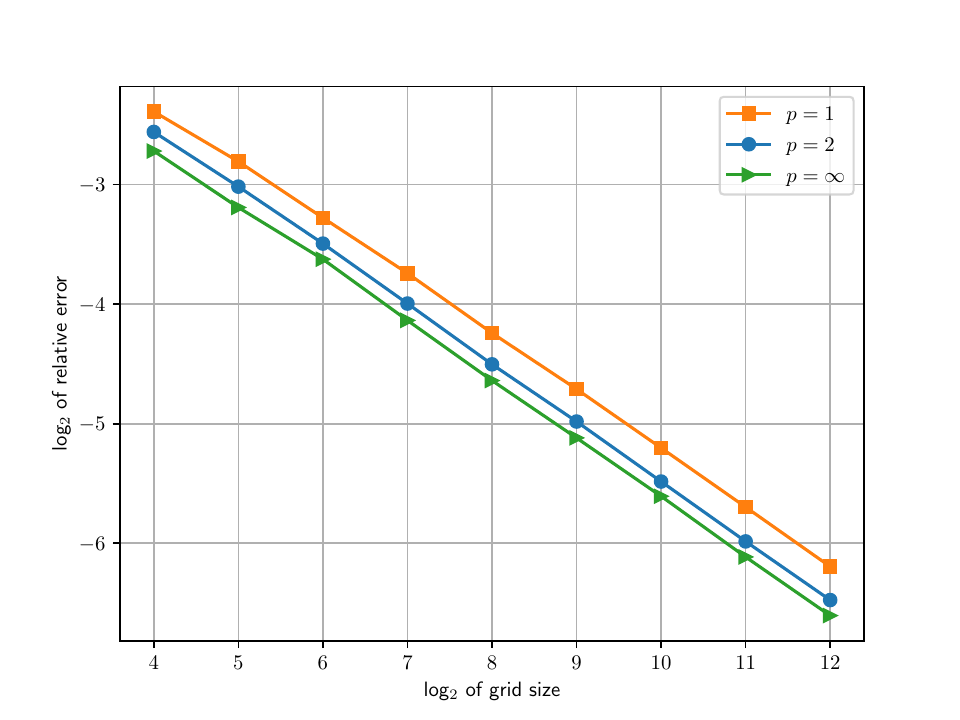}
\caption{The first panel shows a realization of the noisy draws when $n=512$,
with the noiseless curve graphed in red.
The second panel plots $\log_2(\err_{n,p})$ against $\log_2(n)$,
for $p=1,2,\infty$, where the number of draws is $5000$.
The slope of each curve is approximately $-1/2$,
consistent with the error rate predicted by Corollary \ref{cor:convergence}.}
\label{fig:noise}
\end{figure}

For $p=1,2,\infty$, we evaluate the norms $\|Y\|_{\nu_p}$.
For each value of $n$, the experiment is repeated $M = 5000$ times.
Denoting the $M$ random signal-plus-noise vectors by $Y_1, \dots, Y_M$,
we record the average absolute error:
\begin{align}
\err_{n,p} 
= \frac{1}{M} \sum_{k=1}^{M} \frac{\left |\|Y_k\|_{\nu_p} - \|f\|_{V^p} \right|}{\|f\|_{V^p}},
\end{align}
The right panel of Figure \ref{fig:noise} plots $\log_2(\err_{n,p})$
as a function of $\log_2(n)$.
The average error decays like $O(1/\sqrt{n})$ as $n$ increases,
consistent with Corollary \ref{cor:convergence}.

\subsection{Robustness to  noise II}
\label{sec:noise_translation}

Next, we examine the simultaneous effects of
additive noise and deformation on the distance.
We consider the function $f$ defined by
\begin{align}
\label{eq:f_noisytr}
f(x) = \sin(2 \pi x) \chi_{[0,1/2]}(x),
\end{align}
and its shifts $f_\epsilon(x) = f(x - \epsilon)$.
We sample the functions on a grid with $n = 1000$ subintervals,
denoting by $\x$ the vector of samples of $f$
and by $\x_\epsilon$ the vector of samples of $f_\epsilon$.
For different values of $\sigma \ge 0$, we draw a random vector
$\z$ with iid entries $z_j \sim N(0,1)$ and set $\y_{\epsilon,\sigma} = \x_\epsilon + \sigma \z$.
We then compute the distances $\|\x - \y_{\epsilon,\sigma}\|_{\nu_p}$
and $\|\x - \y_{\epsilon,\sigma}\|_{\ell_p}$,
for $p=1,2,\infty$.
The distances are averaged over $5000$ independent realizations
of the noise vector $\z$.
In Figure \ref{fig:noisytr_dists},
we plot these average distances as a function of $\epsilon$,
for different noise levels $\sigma$.
From these plots, we see that the Volterra distances are more
robust to noise than the Lebesgue distances,
though the effect of the noise is larger for smaller values of $p$.

\begin{figure}
\center
\includegraphics[scale=.4]{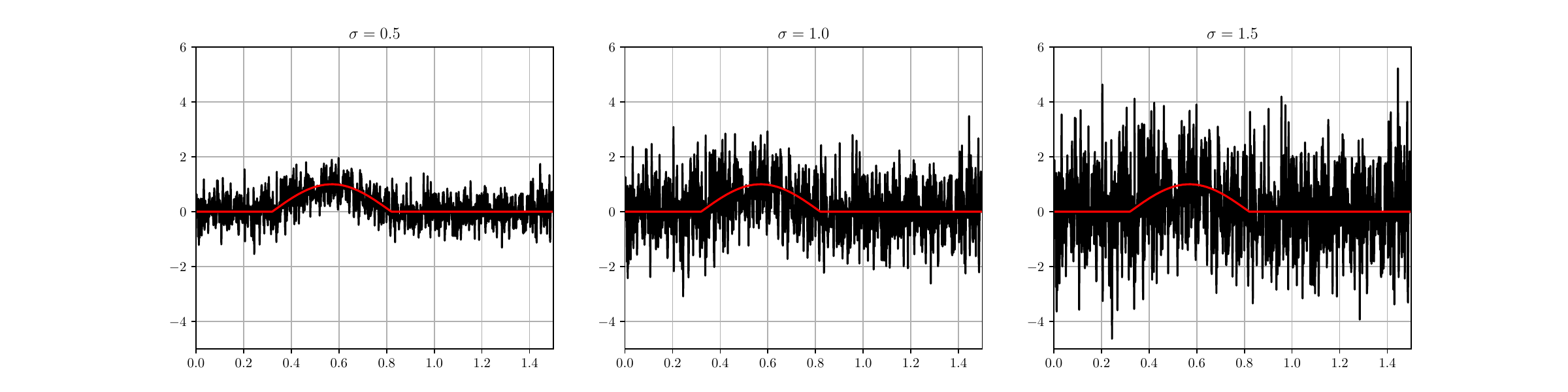}
\caption{Each panel shows shows a realization of the noisy curve (in black)
from Section \ref{sec:noise_translation} with the noise levels $0.5$, $1.0$,
and $1.5$. The noiseless curve is graphed in red.
}
\label{fig:noisytr_curves}
\end{figure}

\begin{figure}
\center
\includegraphics[scale=.5]{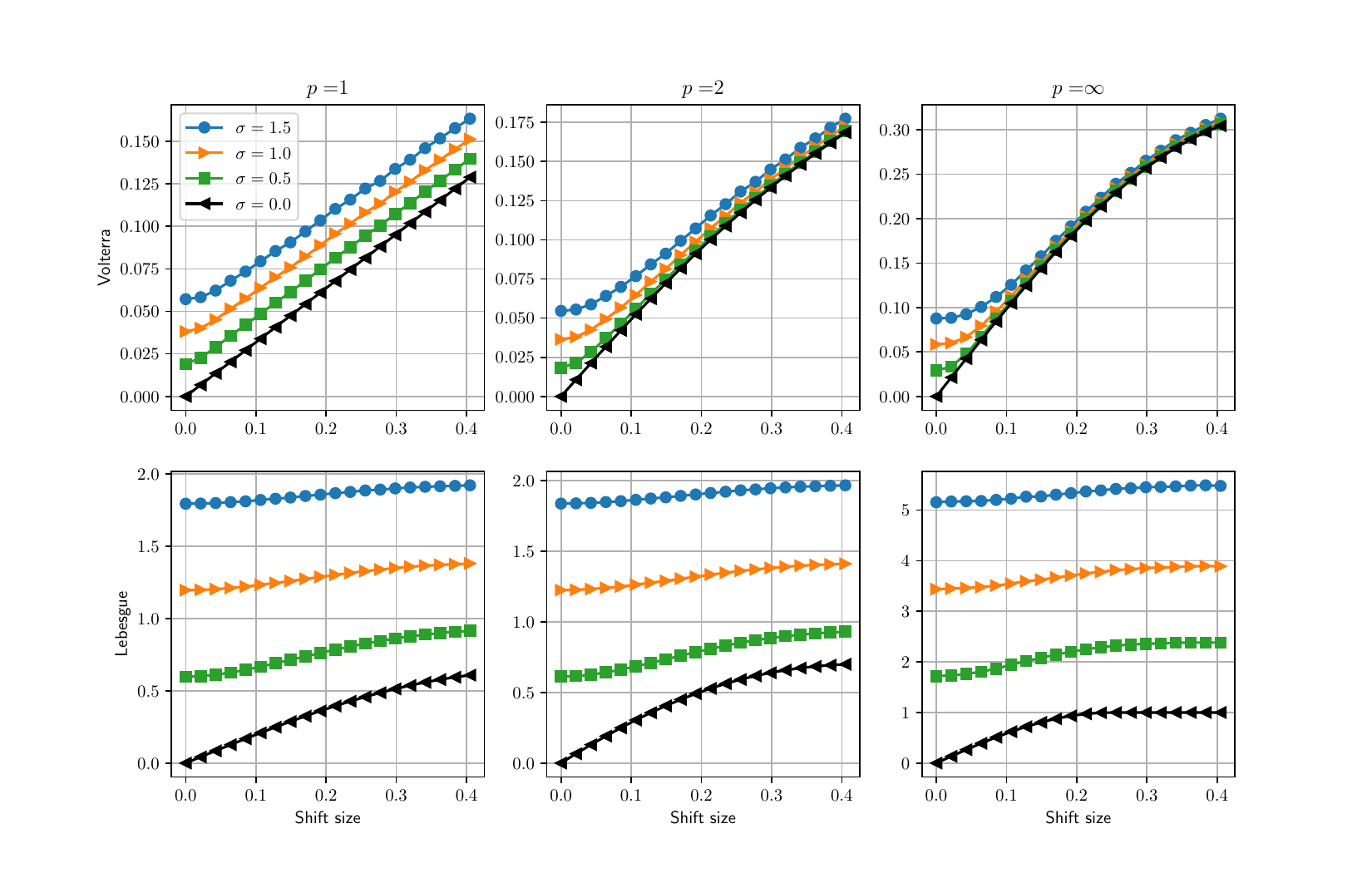}
\caption{The first row shows the approximated Volterra distances
between the function \eqref{eq:f_noisytr} 
and its noisy shifts, as a function of the shift size,
for different noise levels.
The second row shows the approximated Lebesgue distances.
The values of $p$ (from left to right) are $p=1,2, \infty$.
See Section \ref{sec:noise_translation} for details.
}
\label{fig:noisytr_dists}
\end{figure}

\section{Proofs from Section \ref{section:properties}}
\label{section:proofs_properties}

We now turn to the proofs of the main theorems from Section \ref{section:properties}.
Theorems \ref{thm:projpert}, \ref{thm:rotations2D}, and \ref{thm:deformation}
may all be derived as corollaries of the following
general result:

\begin{thm}
\label{thm:general}
Let $A$ and $B$ be non-empty, bounded, open sets in $\R^d$,
with $r = \diam(A \cup B)$.
Let $F: \R^d \to \R$ be in $L^p$ and supported on $A$,
$\Phi : B \to A$ be an $\epsilon$-deformation,
and $F_\Phi = \Phi^{-1}_\sharp F$, i.e.
\begin{align}
F_\Phi(\x) = F(\Phi(\x)) |\det(\nabla \Phi(\x))|
\end{align}
on $B$, and $0$ elsewhere.
Then for any $\u \in \S^{d-1}$,
\begin{align}
\|\calP_\u F - \calP_\u F_\Phi\|_{V^p}
\le \epsilon^{1/p} \cdot \min\left\{C(\Psi,r)^{1-1/p} \cdot \|F\|_{L^p}, \ \|F\|_{L^1}\right\},
\end{align}
where
\begin{align}
C(\Psi,r) = \max_{|t| \le r/2} |\{ \x \in A : \la \x , \u \ra \le t \le \la \Psi(\x),\u \ra
                                  \ \mathrm{ or } \ \la \Psi(\x),\u \ra \le t \le \la \x , \u \ra\}|.
\end{align}

\end{thm}

Theorem \ref{thm:general} is proved in Section \ref{proof:general}.
Sections \ref{proof:projpert}, \ref{proof:rotations2D},
and \ref{proof:deformation} then contain the proofs for
Theorems \ref{thm:projpert}, \ref{thm:rotations2D},
and \ref{thm:deformation}, respectively.

\subsection{Proof of Theorem \ref{thm:general}}
\label{proof:general}

Without loss of generality, suppose $\u = \e_1 = (1,0,\dots,0)$,
and for brevity, if $G : \R^d \to \R$ is a function of $d$ variables,
let
\begin{math}
\calP G = \calP_{\e_1} G.
\end{math}
That is,
\begin{align}
(\calP G)(t) = \int_{\R^{d-1}} G(t,x_1,\dots,x_{d-1}) dx_1 \cdots dx_{d-1}
= \int_{\R^{d-1}} G(t,\x) d\x.
\end{align}

Let $R=r/2$. Also without loss of generality, suppose
that $A$ and $B$ are contained in $\BB = \BB_R$,
the closed ball of radius $R$ centered at the origin.

First, suppose $p=1$. We will show that
\begin{align}
\label{eq:general_one}
\|\calP F - \calP F_\Phi\|_{V^1}
\le \epsilon \cdot \|F\|_{L^1}.
\end{align}

By definition,
\begin{align}
(\calP F)(x) = \int_{\R^{d-1}} F(x,\y) \, d\y,
\end{align}
and
\begin{align}
(\calP F_\Phi)(x) &= \int_{\R} F_\Phi(x,\y) \, d\y
\nonumber \\
&= \int_{\y:(x,\y) \in B} F(\Phi(x,\y)) |\det(\nabla \Phi(x,\y))| \, d\y.
\end{align}
Let $G$ on $[-R,R]$ be absolutely continuous,
with derivative $g = G'$ satisfying $\|g\|_{L^\infty} \le 1$.

Performing the change of variables $\u = \Phi(x,\y)$ gives
\begin{align}
\label{eq:401021-0}
\int_{-R}^{R} G(x) (\calP F_\Phi)(x) \, dx
&= \int_{-R}^{R} G(x) \int_{y:(x,\y) \in B}
        F(\Phi(x,\y)) |\det(\nabla\Phi(x,\y))| \,d\y \,dx
\nonumber \\
&= \int_{B} G(x)  F(\Phi(x,\y)) |\det(\nabla\Phi(x,\y))| \,d\y \,dx
\nonumber \\
&= \int_{A} G(\psi_1(\u)) F(\u)  \,d\u,
\end{align}
and similarly,
\begin{align}
\label{eq:401021-1}
\int_{-R}^{R} G(x) (\calP F)(x) \, dx
&= \int_{A} G(x_1) F(\x)  \,d \x.
\end{align}

We then have
\begin{align}
\label{eq:401021}
\int_{-R}^{R} G(x) ((\calP F)(x) - (\calP F_\Phi)(x)) \, dx
&= \int_{A} G(x_1) F(\x)  \,d\x - \int_{A} G(\psi_1(\x)) F(\x)  \,d\x
\nonumber \\
&= \int_{A} (G(x_1) - G(\psi_1(\x)))  F(\x) \,d\x
\nonumber \\
&\le \|F\|_{L^1} \max_{\x \in A}|G(x_1) - G(\psi_1(\x))|.
\end{align}

Now, because $g = G'$ satisfies $\|g\|_{L^\infty} \le 1$,
we have
\begin{align}
|G(x_1) - G(\psi_1(\x))|
&= \left| \int_{x_1}^{\psi_1(\x)} g(t) \, dt \right|
\nonumber \\
&\le \|g\|_{L^\infty} |x_1 - \psi_1(\x)|
\nonumber \\
&\le | \x - \Psi(\x) |
\nonumber \\
&\le \epsilon,
\end{align}
and therefore, taking the supremum over all such $G$
and using Proposition \ref{prop:variational} shows that
\begin{align}
\label{eq:59910301-0}
\| \calP F - \calP F_\Phi\|_{V^1} \le \epsilon \cdot \|F\|_{L^1}.
\end{align}
This completes the proof when $p=1$.

We will now prove that
\begin{align}
\label{eq:general_infty}
\|\calP F - \calP F_\Phi\|_{V^\infty} \le C(\Psi,r) \|F\|_{L^\infty}.
\end{align}

Let $I_{\x}$ be the interval $[x_1,\psi_1(\x)]$ when $x_1 \le \psi_1(\x)$,
and $[\psi_1(\x),x_1]$ when $x_1 > \psi_1(\x)$;
and let $\chi(\x,t)$ be $1$ if $t \in I_{\x}$, and $0$ otherwise.

Now, take an absolutely continuous $G$ on $[-R,R]$ whose derivative
$g = G'$ satisfies $\|g\|_{L^1} = 1$.
Using \eqref{eq:401021-0} and \eqref{eq:401021-1}
as before, we have
\begin{align}
\int_{-R}^{R} G(x) ((\calP F)(x) - (\calP F_\Phi)(x)) \, dx
&= \int_{A} G(x_1) F(\x)  \,d\x - \int_{A} G(\psi_1(\x)) F(\x)  \,d\x
\nonumber \\
&= \int_{A} (G(x_1) - G(\psi_1(\x)))  F(\x) \,d\x
\nonumber \\
&\le \|F\|_{L^\infty} \int_{A} |G(x_1) - G(\psi_1(\x))| \,d\x.
\end{align}
Then, using $g = G'$, we have
\begin{align}
\int_{A} |G(x_1) - G(\psi_1(\x))| \,d\x
&= \int_{A} \left| \int_{I_{\x}} g(t) dt \right| \,d\x
\nonumber \\
&= \int_{A} \left| \int_{-R}^{-R} g(t) \chi(\x,t) \,dt \right| \,d\x
\nonumber \\
&\le \int_{A} \int_{-R}^{R} | g(t) | \chi(\x,t) \,dt  \,d\x
\nonumber \\
&= \int_{-R}^{R} | g(t) | \int_{A}  \chi(\x,t) \,d\x \,dt
\nonumber \\
&\le \|g\|_{L^1} \sup_{|t| \le R}\int_{A} \chi(\x,t) \,d\x
\nonumber \\
&=\sup_{|t| \le R}\int_{A} \chi(\x,t) \,d \x
\nonumber \\
&= C(\Psi,r),
\end{align}
which completes the proof of \eqref{eq:general_infty}.

Finally, since the mapping $F \mapsto \int_{-R}^x (\calP F - \calP F_\Phi)$ is
linear, we may now combine the bounds \eqref{eq:general_one}
and \eqref{eq:general_infty} using
the Riesz-Thorin Interpolation Theorem
(see, e.g. Theorem~6.27 in \cite{folland1999real})
to complete the proof that
\begin{align}
\|\calP_\u F - \calP_\u F_\Phi\|_{V^p}
\le \epsilon^{1/p} \cdot C(\Psi,r)^{1-1/p} \cdot \|F\|_{L^p}.
\end{align}

Next, we will show that
\begin{align}
\|\calP_\u F - \calP_\u F_\Phi\|_{V^p}
\le \epsilon^{1/p} \cdot \|F\|_{L^1}.
\end{align}
Let $G$ be absolutely
continuous on $[-R,R]$, with derivative $g = G'$ satisfying
$\|g\|_{L^q} \le 1$.
From  \eqref{eq:401021},
\begin{align}
\int_{-R}^{R} G(x) ((\calP F)(x) - (\calP F_\Phi)(x)) \, dx
\le \|F\|_{L^1} \max_{\x \in A}|G(x_1) - G(\psi_1(\x))|,
\end{align}
and so we must show that for all $\x \in A$,
\begin{align}
|G(x_1) - G(\psi_1(\x))| \le \epsilon^{1/p}.
\end{align}

As before, let $I_{\x}$ be the interval $[x_1,\psi_1(\x)]$ if
$x_1 \le \psi_1(\x)$, and $[\psi_1(\x),x_1]$ if $\psi_1(\x) \le x_1$,
and let $\chi(\x,t)$ be $1$ if and only if $t \in I_{\x}$,
and $0$ otherwise. Note that
\begin{align}
\label{eq:69503010}
\int_{-R}^{R} \chi(\x,t) \, dt
\le |x_1 - \psi_1(\x)|
\le | \x - \Psi(\x) |
\le \epsilon.
\end{align}

Using that $G(y) = -\int_{y}^{b} g(t)dt$, we may write,
for any $\x \in A$,
\begin{align}
|G(x_1) - G(\psi_1(\x))|
= \left| \int_{I_\x} g(t) \, dt \right|,
\end{align}
and H\"{o}lder's inequality yields
\begin{align}
\left| \int_{I_\x} g(t) \, dt \right|
&= \left| \int_{-R}^{R} g(t) \chi(\x,t) dt \right|
\nonumber \\
&\le \|g\|_{L^q} \left(\int_{-R}^{R} \chi(\x,t)^p \, dt \right)^{1/p}
\nonumber \\
&\le \left(\int_{-R}^{R} \chi(\x,t) \, dt \right)^{1/p}
\nonumber \\
&\le \epsilon^{1/p},
\end{align}
where the last inequality follows from \eqref{eq:69503010}.
This completes the proof.

\subsection{Proof of Theorem \ref{thm:projpert}}
\label{proof:projpert}

Without loss of generality, suppose that $A$ and $B$ are contained in $\BB = \BB_{r/2}$,
and that $\u = \e_1 = (1,0,\dots,0)$; and let $\calP = \calP_{\e_1}$.

We will first show that
for all $|t| \le r/2$,
\begin{align}
C(\Psi,r) \le 2 r^{d-1} \epsilon.
\end{align}

Let $\SS_1 = \{\x \in A \,:\, x_1 \le t \le \psi_1(\x)\}$,
and let $\SS_2 = \{\x \in A \,:\, \psi_1(\x) \le t \le x_1\}$. Then
\begin{align}
C(\Psi,r) = |\SS_1 \cup \SS_2|.
\end{align}

To bound the area of $\SS_1$, observe first that any $\x$
contained in $\SS_1$ must satisfy $t-\epsilon \le x_1 \le t$.
Indeed, since, by assumption, $\psi_1(\x) - x_1 \le \epsilon$, we have
\begin{align}
x_1 \ge \psi_1(\x) - \epsilon \ge t - \epsilon,
\end{align}
as claimed. Consequently, since $A \subset \BB_{r/2}$,
\begin{align}
|\SS_1|
\le |\{\x \in \BB_{r/2}\ \,:\, t-\epsilon \le x_1 \le t\}|
\le r^{d-1} \epsilon.
\end{align}
Similarly, $|\SS_2| \le r^{d-1} \epsilon$, and hence
\begin{align}
C(\Psi,r)
= |\SS_1 \cup \SS_2|
\le 2 r^{d-1} \epsilon,
\end{align}
as claimed.

Consequently, Theorem \ref{thm:general} states that
\begin{align}
\label{eq:5940201-0}
\|\calP F - \calP F_\Phi\|_{V^p}
&\le \epsilon^{1/p} \cdot \min\left\{(2r^{d-1}\epsilon)^{1-1/p} \cdot \|F\|_{L^p}, \
                                    \|F\|_{L^1}\right\}
\nonumber \\
&= \min\left\{\epsilon \cdot (2r^{d-1})^{1-1/p} \cdot \|F\|_{L^p},\ 
              \epsilon^{1/p} \cdot \|F\|_{L^1}\right\}.
\end{align}

Switching the roles of $\Psi$ and $\Phi$, and using that
$(F_\Phi)_\Psi = F$ and $\|F\|_{L^1} = \|F_\Phi\|_{L^1}$, shows the bound 
\begin{align}
\label{eq:5940201-1}
\|\calP F - \calP F_\Phi\|_{V^p}
= \|\calP (F_\Phi)_\Psi - \calP F_\Phi\|_{V^p}
\le \min\left\{\epsilon \cdot (2r^{d-1})^{1-1/p} \cdot \|F_\Phi\|_{L^p},\ 
              \epsilon^{1/p} \cdot \|F\|_{L^1}\right\}.
\end{align}
Combining \eqref{eq:5940201-0} and \eqref{eq:5940201-1}
completes the proof.

\subsection{Proof of Theorem \ref{thm:rotations2D} }
\label{proof:rotations2D}

Without loss of generality, we assume that $\varphi = 0$.
Let $c = \cos(\theta)$ and $s = \sin(\theta)$, and
let $f = f_\varphi = f_0$.
We have $\Phi(x,y) = (c x + sy, cy - sx)$,
and $\Psi(x,y) = (c x - sy, cy + sx)$.
With $\epsilon = \max_{(x,y) \in \BB_R} |\Phi(x,y) - (x,y) |$,
we will prove
\begin{align}
\label{eq:rotation_epsilon}
\epsilon \le 2 R \sin(\theta/2),
\end{align}
and
\begin{align}
\label{eq:rotation_constant}
C(\Psi,2R) \le R^2 \theta.
\end{align}

Assuming these bounds, Theorem \ref{thm:general}
states that
\begin{align}
\|f - f_\theta\|_{V^p}
&\le \epsilon^{1/p} \cdot  \min\left\{C(\Psi,r)^{1-1/p} \cdot \|F\|_{L^p}, \
        \|F\|_{L^1}  \right\}
\nonumber \\
&\le (2 R \sin(\theta/2))^{1/p} \cdot \min\left\{ (R^2 \theta)^{1-1/p} \cdot \|F\|_{L^p}, \
        \|F\|_{L^1}  \right\}
\nonumber \\
&= (2 \sin(\theta/2))^{1/p} \cdot
        \min\left\{\theta^{1-1/p}\cdot \|F\|_{L^p} \cdot R^{2-1/p}  ,\
        \|F\|_{L^1} \cdot R^{1/p} \right\},
\end{align}
which is the desired result.

To prove \eqref{eq:rotation_epsilon},
suppose $x^2 + y^2 \le R^2$. Then from the double angle formula
$c = \cos(\theta) = \cos^2(\theta/2) - \sin^2(\theta/2)$,
or equivalently $1-c = 2 \sin^2(\theta/2)$;
therefore,
\begin{align}
|\Phi(x,y) - (x,y) |^2
&= (x - cx - sy)^2 + (y - cy + sx)^2
\nonumber \\
&= (1-c)^2 x^2 + s^2 y^2 - 2(1-c)sxy + (1-c)^2y^2 + s^2 x^2 + 2(1-c)sxy
\nonumber \\
&\le (1-c)^2 R^2 + s^2 R^2
\nonumber \\
&\le (1-c)^2 R^2 + (1-c^2) R^2
\nonumber \\
&= 2(1-c) R^2
\nonumber \\
&= 4 R^2\sin^2(\theta/2),
\end{align}
and hence
\begin{align}
\epsilon \equiv\max_{(x,y) \in \BB_R} |\Phi(x,y) - (x,y)|
\le 2 R \sin(\theta/2),
\end{align}
as desired.

Next, to prove \eqref{eq:rotation_constant},
let $I_{x,y}$ be the interval $[x,cx+sy]$
when $x \le cx + sy$, and the interval $[cx + sy, x]$ when
$cx + sy \le x$; and let $\chi(x,y,t)$ be $1$ if $t \in I_{x,y}$ and $0$ otherwise.
Then $C(\Psi,2R) = \max_{|t| \le R} \int_{\BB_R} \chi(x,y,t) dx dy$, and
so we need to show that for all $t \in [-R,R]$,
\begin{align}
\label{eq:bound44}
\int_{\BB_R} \chi(x,y,t) \, dx \, dy \le R^2 \theta.
\end{align}
It is enough to show this for $R=1$, since $C(\Psi,2R) = R^2 C(\Psi,2)$.

To that end, let $\BB = \BB_1$, and
observe that for all $|t| \le 1$,
\begin{align}
\int_{\BB} \chi(x,y,t) \,dx \,dy = 2 |\SS_{t,(1,0),(c,s)} \cup \SS_{t,(c,s),(1,0)}|,
\end{align}
where, for unit vectors $\v$ and $\w$, $\SS_{t,\v,\w}$ is the region defined by
\begin{align}
\SS_{t,\v,\w} = \{\u \in \BB: \langle \u, \v \rangle \le t \le \langle \u, \w \rangle \}.
\end{align}
By rotational symmetry, the following lemma is immediate:
\begin{lem}
If $\a$ and $\b$ are any unit vectors with angle $\theta$, then
$|\SS_{t,(1,0),(c,y)}| = |\SS_{t,\a,\b}|$.
Furthermore, $|\SS_{t,\a,\b}| = |\SS_{-t,\a,\b}|$,
and $|\SS_{t,\a,\b} \cap \SS_{-t,\a,\b}| = 0$.
\end{lem}

By this lemma, it follows that
\begin{align}
\int_{\BB} \chi(x,y,t) dxdy = 2 |\SS_{t,\v,\w}|
= 2\left| \left\{ \u \in \BB: \langle \u,\v\rangle \le t \le \langle \u,\w\rangle \right\}\right|,
\end{align}
where $\w = (\cos(\theta/2),\sin(\theta/2))$ and
$\v = (\cos(\theta/2),-\sin(\theta/2))$.
Furthermore, we can restrict to $t \ge 0$.

It will be convenient to refer to Figure \ref{fig:tikz_circle},
where $\w$ corresponds to the point labeled $B$,
and $\v$ corresponds to the point labeled $E$. 
In the figure, the line $AD$ is perpendicular to $OB$,
and intersects $OB$ at distance $t$ from the origin;
consequently, the set of all vectors $\u$ in $\BB$ with
$\langle \u , \w \rangle \ge t$ is the circular segment
through the points $A$, $B$ and $D$.
Similarly; the line $CF$ is perpendicular to $OE$,
and intersects $OE$ at distance $t$ from the origin;
consequently, the set of all vectors $\u$ in $\BB$ with
$\langle \u , \v \rangle \le t$ is the circular segment
through the points $C$, $A$ and $F$.
Denote by $G$ the point at the intersection of the lines $AD$
and $CF$; then when $G$ lies within the circle,
the intersection of these two circular segments is the region bounded
by $A$, $C$ and $G$. (When $G$ is outside the circle, then
the two circular segments are disjoint.)

To evaluate the area of this region, we will first find the area of the full circular segment through $A$, $B$ and $D$, and then subtract off the area of the region bounded by $C$, $G$ and $D$.

\begin{lem}
\label{lem:circular_segment}
The area of the circular segment through $A$, $B$ and $D$ is
\begin{align}
\arccos(t) - t \sqrt{1 - t^2},
\end{align}
where $\arccos$ takes values in $[0,\pi]$.
\end{lem}

\begin{proof}
This is immediate from the well-known formula for the
area of a circular segment, and the fact that the line segment
from $O$ to $H$ has length $t$.
\end{proof}

The next lemma is also elementary, and likely known already;
however, since we could not find the exact identity in the literature,
we provide a self-contained proof.

\begin{lem}
\label{lem:area_intersection}
When $t \le \cos(\theta/2)$, the intersection between
the circular segment bounded by $A$, $B$ and $D$ and the
circular segment bounded by $C$, $E$ and $F$ has area
\begin{align}
\arcsin\left( \sqrt{1-t^2} \cos(\theta/2) - t \sin(\theta/2)\right) - t\sqrt{1 - t^2} + t^2 \tan(\theta/2),
\end{align}
where $\arcsin$ denotes the inverse of $\sin$ on the interval $[0,\pi/2]$
(and hence takes values in this interval).
When $t > \cos(\theta/2)$, the two circular segments are disjoint.
\end{lem}

\begin{proof}
We begin by showing the second part, namely that when $t > \cos(\theta/2)$,
the circular segments are disjoint, or equivalently that the point $G$
lies outside of the circle.
Indeed, it is straightforward to show that $G$ is located at the point
$(t/\cos(\theta/2),0)$; hence, $G$ is inside the circle so long as
$t / \cos(\theta/2) \le 1$,
or equivalently,
$t \le \cos(\theta/2)$, as desired.

Let us now suppose that $t \le \cos(\theta/2)$, and evaluate the area of the region bounded by $C$, $G$, and $D$. The line segment from $G$ to $D$ has arc-length parameterization
\begin{align}
\alpha(s) = t ( \cos(\theta/2) ,\sin(\theta/2)) + s ( \sin(\theta/2),- \cos(\theta/2)),
\end{align}
and the line segment from $C$ to $G$ has arc-length parameterization
\begin{align}
\beta(s) = t(\cos(\theta/2) , -\sin \theta/2) + (\sqrt{1-t^2}+t\cdot \tan(\theta/2) - s)
    (\sin(\theta/2),\cos(\theta/2)),
\end{align}
where
\begin{align}
t \cdot \tan(\theta/2) \le s \le \sqrt{1-t^2}.
\end{align}
The counterclockwise arc from $D$ to $C$ has arc-length parameterization
\begin{align}
\gamma(\varphi) = (\cos(\varphi), \sin(\varphi)),
\end{align}
where
\begin{align}
-\arcsin\left(\sqrt{1-t^2} \cos(\theta/2) - t \cdot \sin(\theta/2) \right)
\le \varphi
\le \arcsin\left(\sqrt{1-t^2} \cos(\theta/2) - t \cdot \sin(\theta/2) \right).
\end{align}
When $t \le \cos(\theta/2)$, we will evaluate the area using Green's Theorem, by computing
\begin{math}
\frac{1}{2} \oint (xdy -  ydx)
\end{math}
over each curve. For $\alpha$, we have
\begin{align}
\frac{1}{2} \oint_\alpha xdy
&= \frac{1}{2} \int_{t \cdot \tan(\theta/2)}^{\sqrt{1-t^2}} \left[(t \cdot \cos(\theta/2) + s\cdot \sin(\theta/2)) (-\cos(\theta/2)) \right] ds
\nonumber \\
&= \frac{t \cdot \cos^2(\theta/2)}{2} \left(t \cdot \tan(\theta/2) - \sqrt{1-t^2} \right)
    + \frac{\sin(\theta/2)\cos(\theta/2)}{4}\left(t^2 \cdot \tan^2(\theta/2) - 1 + t^2\right),
\end{align}
and 
\begin{align}
\frac{1}{2} \oint_\alpha y dx
&= \frac{1}{2} \int_{t \cdot \tan(\theta/2)}^{\sqrt{1-t^2}} \left[(t \cdot \sin(\theta/2) - s\cdot \cos(\theta/2)) (\sin(\theta/2)) \right] ds
\nonumber \\
&= \frac{t \cdot \sin^2(\theta/2)}{2} \left(\sqrt{1-t^2} - t \cdot \tan(\theta/2) \right)
    + \frac{\sin(\theta/2)\cos(\theta/2)}{4}(t^2 \cdot \tan^2(\theta/2) - 1 + t^2),
\end{align}
and hence
\begin{align}
\frac{1}{2} \oint_\alpha (xdy-ydx)
= \frac{t}{2} \cdot \left(t \cdot \tan(\theta/2) - \sqrt{1-t^2} \right).
\end{align}
Similarly,
\begin{align}
\frac{1}{2} \oint_\beta (xdy-ydx)
= \frac{t}{2} \cdot \left(t \cdot \tan(\theta/2) - \sqrt{1-t^2} \right).
\end{align}
Finally, it is straightforward to check that
\begin{align}
\frac{1}{2} \oint_\gamma (xdy-ydx) = \arcsin\left(\sqrt{1-t^2} \cos(\theta/2) - t \cdot \sin(\theta/2) \right).
\end{align}
Adding all three integrals together, we find that the area of the region is
\begin{align}
\frac{1}{2} \oint_\gamma (xdy-ydx) = \arcsin\left(\sqrt{1-t^2} \cos(\theta/2) - t \cdot \sin(\theta/2) \right) - t\sqrt{1-t^2} +  t^2 \tan(\theta/2),
\end{align}
as claimed.
\end{proof}

\begin{figure}
\centering

\tikzmath{
    \radius = 3;
    \angl = pi/4.6 * (180/pi);
    \angl = pi/3.6 * (180/pi);
    \hh = cos(\angl);
    \xcoord = \radius*\hh;
    \ycoord = sqrt(\radius*\radius-\xcoord*\xcoord);
    \tt=.4*\radius;
    \ss=sqrt(\radius*\radius-\tt*\tt);
    \xorth=\tt*cos(\angl) + \ss*sin(\angl);
    \yorth=-\tt*sin(\angl) + \ss*cos(\angl);
    \ss2=-sqrt(\radius*\radius-\tt*\tt);
    \xort=\tt*cos(\angl) + \ss2*sin(\angl);
    \yort=-\tt*sin(\angl) + \ss2*cos(\angl);
%
%
    \xinter = \tt*cos(\angl) + \tt*sin(\angl)*tan(\angl);
%
%
    \offset=.3;
}

\begin{tikzpicture}

\draw[thick] (0,0) circle (\radius);

\draw[thick] (0,0) -- (\xcoord,\ycoord);
\draw[thick] (0,0) -- (\xcoord,-\ycoord);

\draw[thick] (\xorth,\yorth) -- (\xort,\yort);
\draw[thick] (\xorth,-\yorth) -- (\xort,-\yort);

%
%
\tikzmath{
\fff=.2;
}

\draw (\radius*\fff,0) arc (0:\angl:\radius*\fff);
\draw (\radius*\fff,0) arc (0:-\angl:\radius*\fff);

\node(draw) at (\offset,0) {$\theta$};

%
%

%
%
\filldraw (\xcoord,\ycoord) circle (2pt);
\node(draw) at (\xcoord+\offset,\ycoord+\offset) {$B$};

\filldraw (\xcoord,-\ycoord) circle (2pt);
\node(draw) at (\xcoord+\offset,-\ycoord-\offset) {$E$};

%
%
\filldraw (\xorth,\yorth) circle (2pt);
\node(draw) at (\xorth+\offset,\yorth+\offset) {$C$};

\filldraw (\xorth,-\yorth) circle (2pt);
\node(draw) at (\xorth+\offset,-\yorth-\offset) {$D$};

\filldraw (\xort,-\yort) circle (2pt);
\node(draw) at (\xort-\offset,-\yort+\offset) {$A$};

\filldraw (\xort,\yort) circle (2pt);
\node(draw) at (\xort-\offset,\yort-\offset) {$F$};

%
%
\filldraw (0,0) circle (2pt);
\node(draw) at (-\offset,0) {$O$};

%
%
\filldraw (\xinter,0) circle (2pt);
\node(draw) at ({\xinter+sqrt(2)*\offset},0) {$G$};

%
%

\tikzmath{
    \xmeet = \tt*cos(\angl);
    \ymeet = \tt*sin(\angl);
    \sss=\offset*.7;
    \xmeetp = \xmeet + \sss*sin(\angl);
    \ymeetp = \ymeet - \sss*cos(\angl);
    \xmeetpp = \xmeetp - \sss*cos(\angl);
    \ymeetpp = \ymeetp - \sss*sin(\angl);
    \xmeetm = \xmeet - \sss*cos(\angl);
    \ymeetm = \ymeet - \sss*sin(\angl);
}

\filldraw (\xmeet,\ymeet) circle (2pt);
\node(draw) at ({\xmeet-sqrt(2)*\offset},\ymeet) {$H$};

\filldraw (\xmeet,-\ymeet) circle (2pt);
\node(draw) at ({\xmeet-sqrt(2)*\offset},-\ymeet) {$I$};

%
%
\draw (\xmeetp,\ymeetp) -- (\xmeetpp,\ymeetpp);
\draw (\xmeetm,\ymeetm) -- (\xmeetpp,\ymeetpp);
\draw (\xmeetp,-\ymeetp) -- (\xmeetpp,-\ymeetpp);
\draw (\xmeetm,-\ymeetm) -- (\xmeetpp,-\ymeetpp);

\end{tikzpicture}

\caption{Diagram for the proof of \eqref{eq:bound44}. 
The points labeled $B$ and $C$ are located at
$(\cos(\theta/2),\sin(\theta/2))$ and
$(\cos(\theta/2),-\sin(\theta/2))$, respectively.
The point labeled $O$ is the origin, $(0,0)$.
The line segment $OB$ is orthogonal to the line $AD$,
and the line segment $OE$ is orthogonal to the line $FC$.
The line segments $OH$ and $OI$ each have length $t$.}
\label{fig:tikz_circle}
\end{figure}

From Lemmas \ref{lem:circular_segment} and \ref{lem:area_intersection}, we find
\begin{align}
\label{eq:area_difference}
&\frac{1}{2}\int_{\BB} \chi(x,y,t) dxdy
\nonumber \\
=\,&
\begin{cases}
\arccos(t) - t \sqrt{1 - t^2} &\text{ if } t > \cos(\theta/2); \\
\arccos(t) -\arcsin\left(\sqrt{1-t^2} \cos(\theta/2) - t \cdot \sin(\theta/2) \right)  -  t^2 \tan(\theta/2), & \text{ if } t \le \cos(\theta/2).
\end{cases}
\end{align}

To conclude the proof, we must show that this expression is bounded
above by $\theta/2$ for all values of $t$ between $0$ and $1$.
In fact, we will show that \eqref{eq:area_difference} is a decreasing
function of $t$, and hence is maximized at $t=0$.
It is immediately apparent that the expression is
decreasing in $t$ when $t > \cos(\theta/2)$,
since this is the area of the circular segment
with chord at distance $t$ from the origin.
When $t \le \cos(\theta)$, we first observe that
\begin{align}
&\frac{d}{dt}\arcsin\left(\sqrt{1-t^2} \cos(\theta/2) - t \cdot \sin(\theta/2) \right)
\nonumber \\
=\,& \frac{\frac{d}{dt}\left[ \sqrt{1-t^2} \cos(\theta/2) - t \cdot \sin(\theta/2)\right]}
        {\sqrt{1 - (\sqrt{1-t^2} \cos(\theta/2) - t \cdot \sin(\theta/2))^2}}
\nonumber \\
=\,& \frac{-t(1-t^2)^{-1/2} \cos(\theta/2) - \sin(\theta/2)}
        {\sqrt{1 - \left(\sqrt{1-t^2} \cos(\theta/2) - t \cdot \sin(\theta/2)\right)^2}},
\end{align}
and the square of the denominator may be written more simply as
\begin{align}
& 1 - (\sqrt{1-t^2} \cos(\theta/2) - t \cdot \sin(\theta/2))^2
\nonumber \\
=\,& 1 - (1-t^2)\cos^2(\theta/2) - t^2 \sin^2(\theta/2) + 2 t \sqrt{1-t^2}\cos(\theta/2)\sin(\theta/2)
\nonumber \\
=\,& 1 - \cos^2(\theta/2) + t^2 \cos^2(\theta/2)- t^2 \sin^2(\theta/2)
    + 2 t \sqrt{1-t^2}\cos(\theta/2)\sin(\theta/2)
\nonumber \\
=\,& \sin^2(\theta/2) + t^2 \cos^2(\theta/2)- t^2 \sin^2(\theta/2)
    + 2 t \sqrt{1-t^2}\cos(\theta/2)\sin(\theta/2)
\nonumber \\
=\,& t^2 \cos^2(\theta/2) + (1-t^2) \sin^2(\theta/2)
    + 2 t \sqrt{1-t^2}\cos(\theta/2)\sin(\theta/2)
\nonumber \\
=\,& \left(t \cdot \cos(\theta/2) + \sqrt{1-t^2} \sin(\theta/2) \right)^2;
\end{align}
consequently,
\begin{align}
\frac{d}{dt}\arcsin\left(\sqrt{1-t^2} \cos(\theta/2) - t \cdot \sin(\theta/2) \right)
&= \frac{-t(1-t^2)^{-1/2} \cos(\theta/2) - \sin(\theta/2)}
        {t \cdot \cos(\theta/2) + \sqrt{1-t^2} \sin(\theta/2)}
\nonumber \\
&= \frac{-1}{\sqrt{1-t^2}}.
\end{align}

Therefore,
\begin{align}
&\frac{d}{dt} \left[ \arccos(t) -\arcsin\left(\sqrt{1-t^2} \cos(\theta/2) - t \cdot \sin(\theta/2) \right)  -  t^2 \tan(\theta/2) \right]
\nonumber \\
=\,& \frac{-1}{\sqrt{1-t^2}} 
    + \frac{1}{\sqrt{1-t^2}} 
    - 2 t \tan(\theta/2)
\nonumber \\
=\,& - 2 t \tan(\theta/2),
\end{align}
which is negative. Therefore, the maximum value of $\int_{\BB} \chi(x,y,t)dxdy$ occurs when $t=0$, where the value is
\begin{align}
2\arccos(0) - 2\arcsin\left(\cos(\theta/2) \right)
&= \pi - 2 \arcsin\left( \sin(\pi/2 + \theta/2) \right)
\nonumber \\
&= \pi - 2 \arcsin\left( \sin(\pi/2 - \theta/2)\right)
\nonumber \\
&= \pi - 2 \left( \frac{\pi}{2} - \frac{\theta}{2}\right)
\nonumber \\
&= \theta;
\end{align}
where we have used that $\sin$ is even around $\pi/2$. Note that
$\pi/2 - \theta/2$ lies between $0$ and $\pi/2$ (since $\theta$ is between $0$ and $\pi$),
and hence $\arcsin\left( \sin(\pi/2 - \theta/2)\right) = \pi/2 - \theta/2$, as claimed.
This completes the proof.

\subsection{Proof of Theorem \ref{thm:deformation}}
\label{proof:deformation} 

We will first prove that
\begin{align}
C(\Psi,r) \le \epsilon.
\end{align}

Let $I_x$ be the interval $[x,\Psi(x)]$ if $x \le \Psi(x)$, and
$[\Psi(x),x]$ if $\Psi(x) \le x$.
Let $\chi(x,t)$ be $1$ if $t \in I_x$, and $0$ otherwise;
then
\begin{align}
C(\Psi,r) = \max_{|t| \le r/2} \int_{\R} \chi(x,t)\, dt.
\end{align}

Take  $|t| \le r/2$. Suppose that there is some
$x \le t$ with $t \in I_x$; note that for such $x$, $I_x = [x,\Psi(x)]$,
and so $x \le \Psi(x)$. Let $x^*$ be the smallest such $x$.
Then $x^* \le t \le \Psi(x^*)$. We claim that
for all $x > t$, $t \notin I_x$.
Indeed, since $\Psi$ is increasing and $x > t \ge x^*$,
we have $\Psi(x) > \Psi(x^*) \ge t$.
Since both $x > t$ and $\Psi(x) > t$, $t$ does not lie in $I_x$, as claimed.

Consequently, all $x$ for which $t$ lies in $I_x$
are contained inside the interval $[x^*,t]$.
Since $x^* \le t \le \Psi(x^*)$ and $|x^* - \Psi(x^*)| \le \epsilon$,
it follows that $|t - x^*| \le \epsilon$ too. 
Furthermore, if $x > t$, then $\chi(x,t) = 0$ since $t \notin I_x$;
and since $x^*$ is the smallest $x$ for which $t \in I_x$,
if $x < x^*$ then $t \notin I_x$, hence $\chi(x,t) = 0$.
Therefore,
\begin{align}
\int_I \chi(x,t) dx \le \int_{x^*}^{t} 1 dx = |t-x^*| \le \epsilon,
\end{align}
and so $C(\Psi,r) \le \epsilon$.
Analogous reasoning yields the same bound in the case that
there exists $x \ge t$ with $t \in I_x$.

Therefore, Theorem \ref{thm:general} states that
\begin{align}
\label{eq:194920-0}
\|F - F_\Phi\|_{V^p}
&\le \epsilon^{1/p} \cdot \min\left\{\epsilon^{1-1/p} \cdot \|F\|_{L^p}, \
                                    \|F\|_{L^1}\right\}
\nonumber \\
&= \min\left\{\epsilon \cdot  \|F\|_{L^p},\ 
              \epsilon^{1/p} \cdot \|F\|_{L^1}\right\}.
\end{align}

Switching the roles of $\Psi$ and $\Phi$, and using that
$(F_\Phi)_\Psi = F$ and $\|F\|_{L^1} = \|F_\Phi\|_{L^1}$, shows the bound
\begin{align}
\label{eq:194920-1}
\|F - F_\Phi\|_{V^p}
= \|(F_\Phi)_\Psi - F_\Phi\|_{V^p}
\le \min\left\{\epsilon \cdot \|F_\Phi\|_{L^p},\ 
              \epsilon^{1/p} \cdot \|F\|_{L^1}\right\}.
\end{align}
Combining \eqref{eq:194920-0} and \eqref{eq:194920-1}
completes the proof.

\section{Proofs for Section \ref{section:discrete}}
\label{section:proofs_discrete}

First, we introduce some notation that will be useful for the proofs
in this section. For a function $f$ on $[a,b]$, define the vector
$\mV f$ in $\R^{n+1}$ with entries $(\mV f)[k] = (\V f)(a_k)$,
$0 \le k \le n$.

We also state the following simple lemma:

\begin{lem}
\label{lem:volterra_ones}
Suppose $1 \le p \le \infty$.
Let $\mathbf{1}$ be the all $1$'s vector
in $\R^{n+1}$. 
Then
\begin{math}
\|\mathbf{1}\|_{\nu_p} \le (b-a)^{1+1/p}.
\end{math}
\end{lem}

\begin{proof}
For $1 \le k \le n$,
\begin{align}
(\Vtrap \mathbf{1})[k]
= \frac{b-a}{2n} \sum_{j=0}^{k-1} (1 + 1)
= \frac{k}{n}(b-a),
\end{align}
and therefore
\begin{align}
\|\mathbf{1}\|_{\nu_p}
= \left( \frac{b-a}{2n} \sum_{k=0}^{n-1}
    \left[\left(\frac{k}{n}(b-a) \right)^p + \left(\frac{k+1}{n}(b-a) \right)^p\right]  \right)^{1/p}
\le (b-a)^{1+1/p},
\end{align}
as claimed.
\end{proof}

\subsection{Proof of Theorem \ref{thm:convergence_lipschitz}}
\label{proof:convergence_lipschitz}

To begin the proof of Theorem \ref{thm:convergence_lipschitz},
suppose $1 \le p < \infty$.

\begin{lem}
\label{lem:lipschitz}
Suppose $f$ satisfies the assumptions of Theorem \ref{thm:convergence_lipschitz}.
Let $1 \le m \le n$. Then
\begin{align}
\left|T_m(f,a,a_m) - (\V f)(a_m)  \right|
\le \frac{L(b-a)^2}{2n} + \frac{4 r \|f\|_{L^\infty} (b-a)}{n}.
\end{align}
\end{lem}

\begin{proof}
Because $f$ is bounded by $\|f\|_{L^\infty}$, for any  $0 \le j \le m$ we have the bound
\begin{align}
\left|\frac{b-a}{2n}\left(f(a_j) + f(a_{j+1})\right) - \int_{a_{j}}^{a_{j+1}} f(t) dt \right|
\le \frac{2\|f\|_{L^\infty}(b-a)}{n}.
\end{align}
On the other hand, if $j$ is such that
there are no $c_\ell$ in $[a_{j}, a_{j+1}]$,
then $f$ is $L$-Lipschitz on $[a_{j},a_{j+1}]$, and so
\begin{align}
\label{eq:401039401}
\left|\frac{b-a}{2n}\left(f(a_j) + f(a_{j+1})\right) - \int_{a_{j}}^{a_{j+1}} f(t) dt \right|
&= \frac{1}{2}\left|\int_{a_{j}}^{a_{j+1}} (f(a_j) - f(t) + f(a_{j+1}) - f(t)) dt \right|
\nonumber \\
&\le \frac{L}{2}\int_{a_{j}}^{a_{j+1}} (t-a_j + a_{j+1}-t) dt
\nonumber \\
&= \frac{L}{2}\int_{a_{j}}^{a_{j+1}} \frac{(b-a)}{n} dt
\nonumber \\
&= \frac{L(b-a)^2}{2n^2}.
\end{align}

Since there are at most $2 r$ intervals $[a_{j},a_{j+1}]$
containing a value from among $c_0,\dots,c_r$
(since each $c_1,\dots, c_r$ can be contained in at most $2$ such intervals),
and there are $m \le n$ subintervals in total, we have
\begin{align}
\label{eq:401039402}
\left|T_m(f,a,a_m) - (\V f)(a_m)  \right|
&= \left|\frac{b-a}{2n}\sum_{j=0}^{m-1} (f(a_j)+f(a_{j+1})) - \int_{a}^{a_{m}} f(t) dt \right|
\nonumber \\
&\le \sum_{j=0}^{m-1} \left|\frac{b-a}{2n}\left(f(a_j) + f(a_{j+1})\right)
    - \int_{a_{j}}^{a_{j+1}} f(t) dt \right|
\nonumber \\
&\le \frac{mL(b-a)^2}{2n^2} + \frac{4 r \|f\|_{L^\infty} (b-a)}{n},
\nonumber \\
&\le \frac{L(b-a)^2}{2n} + \frac{4 r \|f\|_{L^\infty} (b-a)}{n},
\end{align}
as claimed.
\end{proof}

By Lemma \ref{lem:lipschitz},
\begin{align}
\left| (\bV \bf)[m] - (\mV f)[m] \right|
\le \frac{L(b-a)^2}{2n} + \frac{4 r \|f\|_{L^\infty} (b-a)}{n}
\end{align}
Consequently,
\begin{align}
\label{eq:bound1}
\left| \|\bV \bf \|_{\tau_p} - \| \mV f \|_{\tau_p} \right|
&\le \|\bV \bf - \mV f \|_{\tau_p}
\nonumber \\
&\le (b-a)^{1/p}\|\bV \bf - \mV f \|_{\ell_\infty}
\nonumber \\
&\le \frac{L(b-a)^{2+1/p}}{2n} + \frac{4 r \|f\|_{L^\infty} (b-a)^{1+1/p}}{n}.
\end{align}

Next, we will show that
\begin{align}
\label{eq:bound2}
\left| \|f\|_{V^p}
    - \|\mV f\|_{\tau_p} \right|
\le \frac{(b-a)^{1+1/p}}{n} \|f\|_{L^\infty},
\end{align}
Combined with \eqref{eq:bound1}, this will conclude the proof.
To that end, we have the following lemma:

\begin{lem}
\label{lem:trap_pnorm}
Suppose $g$ has Lipschitz constant bounded by $A$ on $[a,b]$,
and let $\bg[k] = g(a_k)$,
where
\begin{align}
a_k = a + \frac{k}{n}(b-a), \quad 0 \le k \le n.
\end{align}
Then for any $1 \le p \le \infty$,
\begin{align}
\left| \|\bg\|_{\tau_p} - \|g\|_{L^p} \right|
\le \frac{(b-a)^{1+1/p}}{n} A.
\end{align}
\end{lem}

\begin{proof}
First, suppose $1 \le p < \infty$.
For each $0 \le m \le n$, let
\begin{align}
S_m = \left[ \left(\frac{b-a}{n} \right)
                \left(\frac{|g(a_m)|^p + |g(a_{m+1})|^p}{2} \right)\right]^{1/p},
\end{align}
and let
\begin{align}
R_m = \left( \int_{a_m}^{a_{m+1}} |g(x)|^p \,dx \right)^{1/p}.
\end{align}
Then
\begin{align}
\|\bg\|_{\tau_p} = \left(\sum_{m=0}^{n-1} |S_m|^p \right)^{1/p}
\end{align}
and
\begin{align}
\|g\|_{L^p} = \left(\sum_{m=0}^{n-1} |R_m|^p \right)^{1/p}.
\end{align}
The Mean Value Theorem
ensures that there is some $t_m$ in the interval $[a_m,a_{m+1}]$
satisfying
\begin{align}
R_m = \left(\frac{b-a}{n} \right)^{1/p}|g(t_m)|.
\end{align}

Then
\begin{align}
|S_m - R_m|
&=\left| S_m - \left(\frac{b-a}{n} \right)^{1/p} |g(t_m)| \right|
\nonumber \\
&= \left(\frac{b-a}{n} \right)^{1/p}
    \left| \left[\left(\frac{|g(a_m)|^p + |g(a_{m+1})|^p}{2} \right)\right]^{1/p}
    - \left[\left(\frac{|g(t_m)|^p + |g(t_{m})|^p}{2} \right)\right]^{1/p} \right|
\nonumber \\
&\le \left(\frac{b-a}{n} \right)^{1/p}\left( \frac{|g(a_m) - g(t_m)|^p
    + |g(a_{m+1}) - g(t_m)|^p}{2}\right)^{1/p}
\nonumber \\
&\le \left(\frac{b-a}{n} \right)^{1/p}\left( \frac{A^p |a_m-t_m|^p
    + A^p |a_{m+1} - t_m|}{2}\right)^{1/p}
\nonumber \\
&\le \left(\frac{b-a}{n} \right)^{1/p} A |a_{m+1} - a_m|
\nonumber \\
&= A \left(\frac{b-a}{n} \right)^{1+1/p}.
\end{align}

Consequently,
\begin{align}
\left|\|\bg\|_{\tau_p} - \|g\|_{L^p} \right|
&= \left|\left(\sum_{m=0}^{n-1} |S_m|^p \right)^{1/p}
    -  \left(\sum_{m=0}^{n-1} |R_m|^p \right)^{1/p}\right|
\nonumber \\
&\le \left(\sum_{m=0}^{n-1} |S_m - R_m|^p \right)^{1/p}
\nonumber \\
&\le \left(\sum_{m=0}^{n-1} A^p \left(\frac{b-a}{n} \right)^{p+1} \right)^{1/p}
\nonumber \\
&= \frac{(b-a)^{1+1/p}}{n} A.
\end{align}
This completes the proof when $p$ is finite.
The proof for $p=\infty$
follows by taking the limit $p \to \infty$
and using the convergence of the $p$-norm to the $\infty$-norm.
\end{proof}

Now, $\V f$ has Lipschitz constant bounded by $\|f\|_{L^\infty}$:
\begin{align}
\left| (\V f)(x) - (\V f)(y) \right|
= \left| \int_{a}^{x} f(t) dt  -  \int_{a}^{y} f(t) dt \right|
= \left| \int_{x}^{y} f(t) dt \right|
\le L |x-y|.
\end{align}
We may therefore apply Lemma \ref{lem:trap_pnorm} with $g = \V f$
and $A = \|f\|_{L^{\infty}}$
to show \eqref{eq:bound2}, thereby completing the proof of
Theorem \ref{thm:convergence_lipschitz} for $f$ and $\bf$.

To prove the result for $\fcen$ and $\bfcen$,
from Lemma \ref{lem:lipschitz} we have
\begin{align}
|\mu(f) - \mean(\bf)|
= \left| \frac{1}{b-a}(\V f)(b) - \frac{1}{b-a} T_n(f,a,b) \right|
\le \frac{L(b-a)}{2n} + \frac{4r \|f\|_{L^\infty}}{n}.
\end{align}
Letting $\wtilde \bf$ be the vector in $\R^{n+1}$ with entries
$\wtilde \bf[k] = \fcen(a_k) = f(a_k) - \mu(f)$, $0 \le k \le n$,
applying  the result we have already shown to $\fcen$ in place of $f$ gives
\begin{align}
\label{eq:8640}
\left| \|\wtilde \bf \|_{\nu_p} - \| \fcen \|_{V^p} \right|
\le C \frac{(b-a)^{1+1/p}}{n} \left( L (b-a) + r\|\fcen\|_{L^\infty}\right).
\end{align}
Furthermore, for all $0 \le k \le n$,
\begin{align}
\bfcen[k] - \wtilde \bf[k]
= \mu(f) - \mean(\bf),
\end{align}
and so, by Lemma \ref{lem:volterra_ones},
\begin{align}
\label{eq:8650}
\left| \|  \bfcen \|_{\nu_p} - \|\wtilde \bf \|_{\nu_p} \right|
&\le \| \bfcen - \wtilde \bf \|_{\nu_p}
\nonumber \\
&= \| (\mu(f) - \mean(\bf)) \mathbf{1}  \|_{\nu_p}
\nonumber \\
&= |\mu(f) - \mean(\bf)| \|\mathbf{1}\|_{\nu_p}
\nonumber \\
&\le |\mu(f) - \mean(\bf)| (b-a)^{1+1/p}
\nonumber \\
&\le C\frac{(b-a)^{1+1/p}}{n}
    \left(L(b-a) + r \|\fcen\|_{L^\infty} \right).
\end{align}
The result now follows by combining \eqref{eq:8640} and \eqref{eq:8650}.

\subsection{Proof of Theorem \ref{thm:convergence_smooth}}
\label{proof:convergence_smooth}

We will first prove a general result on approximating
the $L^p$ norm:

\begin{prop}
\label{prop:pnorm_trap}
Suppose $G$ is a $C^3$ function on $[a,b]$ with $G(a) = 0$.
Suppose $a = c_0 < c_1 < \dots < c_{r} = b$ are points such that,
for $0 \le j \le r-1$,
$G(c_j) = 0$ and $\sign(G)$ is constant on $(c_j,c_{j+1})$.
Let $\bg[k] = G(a_k)$, $0 \le k \le n+1$.

Let $1 \le p < \infty$.
Then for all $n$ sufficiently large,
\begin{align}
\left| \|\bg\|_{\tau_p} - \|G\|_{L^p} \right|
\le C\frac{(b-a)^2}{n^2} \left((b-a)^{1/p} \|G''\|_{L^\infty}
        + \frac{|G(b)|^{p-1}}{\|G\|_{L^p}^{p-1}}|G'(b)| \right),
\end{align}
where $C>0$ is a universal constant;
and
\begin{align}
\left| \|\bg\|_{\ell_\infty} - \|G\|_{L^\infty} \right|
\le C \frac{(b-a)^2}{n^2}  \|G''\|_{L^\infty},
\end{align}
for all  $n$ sufficiently large.
\end{prop}

\begin{proof}
Since the result is trivial if $G \equiv 0$, suppose that $G$
is not constantly zero.
First suppose $1 < p < \infty$, and let $H(x) = |G(x)|^p$
for $a \le x \le b$.
Then $H$ is continuous on $[a,b]$,
and, since the sign of $G$ is constant on
each interval $(c_j,c_{j+1})$, $0 \le j \le r-1$,
$H$ is $C^3$ on $(c_j,c_{j+1})$.

From (a slightly different form of) the Euler-Maclaurin
formula found in
\cite{berrut2006circular} and \cite{berrut2023extrapolation},
we may write the error of the trapezoidal rule approximation $T_n(H,a,b)$
to $\int_{a}^{b} H(x) dx$ as
\begin{align}
T_n(H,a,b) - \int_{a}^{b} H(x) dx
= \frac{\delta^2}{2} \sum_{k=0}^{r-1} P_2(t_k) \left[ H'(c_k^-) - H'(c_k^+)  \right]
    - \frac{\delta^2}{2}\int_{a}^{b} H''(x) P_2\left( \frac{x-a}{\delta}\right) dx,
\end{align}
where $\delta = (b-a)/n$;
$P_2$ is the second Bernoulli polynomial defined on $[0,1]$ and extended
$1$-periodically (that is, $P_2(x) = (x-1/2)^2 - 1/12$ on $[0,1]$,
and $P_2(x+k) = P_2(x)$ for all integers $k$);
$t_k = (c_k - a) / \delta$; and where $H'(c_0^-)$ is understood to denote $H'(b^-)$.

Because $p > 1$, $H$ is differentiable and
\begin{align}
H'(x) = p |G(x)|^{p-1} \sign(G(x)) G'(x).
\end{align}
Since $G(c_j) = 0$ when $0 \le j \le r-1$,
$H'(c_j^+) = \pm p |G(c_j)|^{p-1} G'(c_j^+) = 0$,
and for $1 \le j \le r-1$, $H'(c_j^-) = 0$ as well.
When $j=0$, $H'(c_0^-) = H'(b^-) = \pm p |G(b)|^{p-1} G'(b)$.
Therefore,
\begin{align}
\left|\frac{\delta^2}{2}\sum_{k=0}^{r-1} P_2(t_k) \left[ H'(c_k^-) - H'(c_k^+)  \right] \right|
= \frac{(b-a)^2}{2 n^2}p |P_2(1)|  |G(b)|^{p-1} |G'(b)|
= \frac{(b-a)^2 p  |G(b)|^{p-1} |G'(b)|}{12 n^2}.
\end{align}

Next, we will bound the term
\begin{align}
\frac{\delta^2}{2} \int_{a}^{b} H''(x) P_2\left( \frac{x-a}{\delta}\right) dx.
\end{align}

For $x$ in each open interval $(c_k,c_{k+1})$,
\begin{align}
H''(x) = p(p-1) |G(x)|^{p-2} G'(x)^2
            + p |G(x)|^{p-1} \sign(G(x)) G''(x).
\end{align}
Suppose first that $G > 0$ on $(c_j,c_{j+1})$, so that $H(x) = G(x)^p$.
Let $D(x) = G(x)^{p-1}$; then $D'(x) = (p-1) G(x)^{p-2} G'(x)$,
and $D(c_j) = D(c_{j+1}) = 0$.
Then, using integration by parts, and $\|P_2\|_{L^\infty} = 1/6$,
\begin{align}
\int_{c_j}^{c_{j+1}} p(p-1) G(x)^{p-2}  G'(x)^2 P_2\left( \frac{x-a}{\delta}\right) dx
&\le \frac{p}{6}\int_{c_j}^{c_{j+1}} (p-1) G(x)^{p-2}  G'(x)^2 dx
\nonumber \\
&= \frac{p}{6} \int_{c_j}^{c_{j+1}} D'(x)  G'(x) dx
\nonumber \\
&= \frac{p}{6}(D(c_{j+1}) G'(c_{j+1}) - D(c_{j}) G'(c_{j}))
    - \frac{p}{6} \int_{c_j}^{c_{j+1}} D(x) G''(x) dx
\nonumber \\
&= - \frac{p}{6} \int_{c_j}^{c_{j+1}} G(x)^{p-1} G''(x) dx.
\end{align}
Consequently, we have the upper bound
\begin{align}
\left| \int_{c_j}^{c_{j+1}} p(p-1) G(x)^{p-2}  G'(x)^2 P_2\left( \frac{x-a}{\delta}\right) dx \right|
&\le \frac{p\|G''\|_{L^\infty}}{6}  \int_{c_j}^{c_{j+1}} |G(x)|^{p-1}  dx;
\end{align}
Furthermore, we also have the bound
\begin{align}
\left| \int_{c_j}^{c_{j+1}} p G(x)^{p-1}  G''(x) P_2\left( \frac{x-a}{\delta}\right) dx \right|
&\le \frac{p\|G''\|_{L^\infty}}{6} \int_{c_j}^{c_{j+1}} |G(x)|^{p-1} dx.
\end{align}
Putting these together shows
\begin{align}
\left| \int_{c_j}^{c_{j+1}} H''(x) P_2\left( \frac{x-a}{\delta}\right) dx \right|
\le \frac{p \|G''\|_{L^\infty}}{3} \int_{c_j}^{c_{j+1}} |G(x)|^{p-1} dx.
\end{align}
The same bound may also be shown if $G < 0$ on $(c_j,c_{j+1})$
(and it is obvious if $G = 0$ on all of $(c_j,c_{j+1})$).

Consequently,
\begin{align}
\left|\frac{\delta^2}{2} \int_{a}^{b} H''(x) P_2\left( \frac{x-a}{\delta}\right) dx \right|
&= \left|\frac{\delta^2}{2} \sum_{j=0}^{r-1}
        \int_{c_j}^{c_{j+1}} H''(x) P_2\left( \frac{x-a}{\delta}\right) dx \right|
\nonumber \\
&\le \frac{\delta^2 p \|G''\|_{L^\infty}}{6}   \int_{a}^{b} |G(x)|^{p-1} dx
\nonumber \\
&\le \frac{\delta^2 p \|G''\|_{L^\infty}}{6} (b-a)^{1/p}
         \left(\int_{a}^{b} |G(x)|^{p} dx\right)^{1-1/p}
\nonumber \\
&= \frac{(b-a)^{2+1/p}}{6 n^2} p  \|G''\|_{L^\infty} \|G\|_{L^p}^{p-1}.
\end{align}

Applying the Euler-Maclaurin formula then gives
\begin{align}
\left| \|\bg\|_{\tau_p}^p - \int_{a}^{b} |G(x)|^p dx \right|
\le \frac{(b-a)^2}{12 n^2} p |G(b)|^{p-1} |G'(b)|
        + \frac{(b-a)^{2+1/p}}{6 n^2} p  \|G''\|_{L^\infty} \|G\|_{L^p}^{p-1}.
\end{align}
It follows that for all $n$ sufficiently large,
\begin{align}
\|\bg\|_{\tau_p}^p \ge \frac{1}{2} \|G\|_{L^p}^p.
\end{align}

Now, the function $y \mapsto y^{1/p}$ has derivative $(1/p)y^{1/p-1}$;
this is a  decreasing function, and hence its maximum value between $\|\bg\|_{\tau_p}^p$
and $\|G\|_{L^p}^p$ is bounded above by
\begin{align}
\frac{1}{p} \left(\frac{1}{2}\|G\|_{L^p}^p \right)^{1/p-1}
= \frac{2^{1-1/p}}{p} \|G\|_{L^p}^{1-p}.
\end{align}

By the mean value theorem, therefore,
\begin{align}
\left| \|\bg\|_{\tau_p} - \|G\|_{L^p} \right|
&\le \left(\frac{(b-a)^2}{12 n^2} p |G(b)|^{p-1} |G'(b)|
        + \frac{(b-a)^{2+1/p}}{6 n^2} p  \|G''\|_{L^\infty} \|G\|_{L^p}^{p-1} \right)
        \cdot \frac{2^{1-1/p}}{p} \|G\|_{L^p}^{1-p}
\nonumber \\
&\le C \frac{(b-a)^2}{n^2} \left( (b-a)^{1/p}\|G''\|_{L^\infty}
        + |G'(b)| \frac{|G(b)|^{p-1}}{\|G\|_{L^p}^{p-1}} \right),
\end{align}
where $C > 0$ is universal.
This completes the proof when $1 < p < \infty$.
The result for $p=1$  follows by taking the limit $p \to 1^+$.

When $p=\infty$, let $x^*$ satisfy $\|G\|_{L^\infty} = |G(x^*)|$.
If $x^* = b$, then, since $\bg[n] = G(b)$,
$\|\bg\|_{\ell_\infty} = |G(b)| = \|G\|_{L^\infty}$;
and since $G(a) = 0$, $x^*$ cannot equal $a$ unless $G \equiv 0$,
in which case the result is trivial.

Now suppose $a < x^* < b$. Then $G'(x^*) = 0$, and so
a second-order Taylor expansion gives
\begin{align}
|G(x) - G(x^*)| \le C\|G''\|_{L^\infty} |x - x^*|^2,
\end{align}
where $C > 0$ is universal.
Consequently, since there is a grid point $a_{k^*}$
within $(b-a)/n$ of $x^*$, we have
\begin{align}
|\bg[k^*] - G(x^*)| = |G(a_{k^*}) - G(x^*)| \le C \frac{(b-a)^2}{n^2} \|G''\|_{L^\infty},
\end{align}
and therefore,
\begin{align}
\left| \|\bg\|_{\ell_\infty} - \|G\|_{L^\infty} \right|
&= |G(x^*)| - \|\bg\|_{\ell_\infty}
\nonumber \\
&\le |G(x^*)| - |\bg[k^*]|
\nonumber \\
&\le |G(x^*) - \bg[k^*]|
\nonumber \\
&\le C\frac{(b-a)^2}{n^2} \|G''\|_{L^\infty},
\end{align}
which is the desired result.
\end{proof}

Applying Proposition \ref{prop:pnorm_trap} to $G(x) = (\V f)(x)$,
and using that $G'(x) = f(x)$ and $G(b) = \mu(f)$,
gives the bound
\begin{align}
\left| \|\mV f \|_{\tau_p} - \|f\|_{V^p} \right|
\le C \frac{(b-a)^2}{n^2} \left( (b-a)^{1/p}\|f'\|_{L^\infty}
        + |f(b)| \frac{|\mu(f)|^{p-1}}{\| f \|_{V^p}^{p-1}} \right)
\end{align}
when $1 \le p < \infty$ and for all $n$ sufficiently large,
\begin{align}
\left| \|\mV f \|_{\tau_p} - \|f\|_{V^\infty} \right|
\le C \frac{(b-a)^2}{n^2} \|f'\|_{L^\infty}
\end{align}
for all $n$.

To finish the proof, we will show that
\begin{align}
\left| \|\mV f \|_{\tau_p} - \|\bf\|_{\nu_p} \right|
\le C\|f''\|_{L^\infty} \frac{(b-a)^{3+1/p} }{n^2}.
\end{align}

\begin{lem}
\label{lem:midpoint_rule}
Let $0 \le m \le n$. Then
\begin{align}
\left| (\V f)(a_{m}) - (\bV \bf)[m] \right|
\le C\|f''\|_{L^\infty} \frac{ (b-a)^3 }{n^2},
\end{align}
where the constant $C > 0$ is universal.
\end{lem}

\begin{proof}
When $m=0$, the left side is 0. When $m \ge 1$,
using a standard error estimate for
the trapezoidal rule
(see Section \ref{sec:trap_rule})
and the fact that $a_m - a = m(b-a)/n$, we get
\begin{align}
\left| (\V f)(a_m) - (\bV \bf)[m] \right|
&=\left| \int_{a}^{a_m} f(t) dt -  T_m(f,a,a_m) \right|
\nonumber \\
&\le  C\|f''\|_{L^\infty} \frac{(a_m-a)^3 }{m^2},
\nonumber \\
&=  C\|f''\|_{L^\infty} \frac{m (b-a)^3 }{n^3}
\nonumber \\
&\le  C\|f''\|_{L^\infty} \frac{(b-a)^3 }{n^2},
\end{align}
as claimed.
\end{proof}

Using Lemma \ref{lem:midpoint_rule}, we have
\begin{align}
\label{eq:bound10201}
\left| \|\mV f \|_{\tau_p} - \|\bf\|_{\nu_p} \right|
&= \left| \| \mV f \|_{\tau_p}  - \|\bV \bf \|_{\tau_p}\right|
\nonumber \\
&\le \|\mV f - \bV \bf \|_{\tau_p}
\nonumber \\
&\le (b-a)^{1/p}\|\mV f - \bV \bf \|_{\ell_\infty}
\nonumber \\
&\le C\|f''\|_{L^\infty} \frac{(b-a)^{3+1/p} }{n^2}.
\end{align}
This completes the proof of Theorem \ref{thm:convergence_smooth}
for the uncentered function $f$.

To prove the result for $\fcen$ and $\bfcen$,
let $\wtilde \bf$ have entries
$\wtilde \bf[k] = \fcen(a_k) = f(a_k) - \mu(f)$.
Applying the result already shown to $\fcen$ in place of $f$,
and noting that $\mu(\fcen) = 0$, gives
\begin{align}
\label{eq:9640}
\left| \|\wtilde \bf \|_{\nu_p} - \| \fcen\|_{V^p} \right|
\le C\frac{(b-a)^{2+1/p}}{n^2}
        \left( (b-a)\|\fcen''\|_{L^\infty} + \|\fcen'\|_{L^\infty}\right).
\end{align}

Since $f$ is $C^2$,
\begin{align}
|\mu(f) - \mean(\bf)|
= \left| \frac{1}{b-a}\int_{a}^{b} f(x) dx - T_n(f,a,b) \right|
\le C\|f''\|_{L^\infty}\frac{(b-a)^2}{n^2}
= C\|\fcen''\|_{L^\infty}\frac{(b-a)^2}{n^2},
\end{align}
where $C > 0$ is universal.
Furthermore, for all $0 \le k \le n$,
\begin{align}
\bfcen[k] - \wtilde \bf[k]
= \mean(\bf) - \mu(f),
\end{align}
and so, by Lemma \ref{lem:volterra_ones},
\begin{align}
\label{eq:9650}
\left| \| \wtilde \bf \|_{\nu_p} - \| \bfcen \|_{\nu_p} \right|
&\le \| \wtilde \bf - \bfcen \|_{\nu_p}
\nonumber \\
&= \| (\mean(\bf) - \mu(f)) \mathbf{1}  \|_{\nu_p}
\nonumber \\
&= |\mean(\bf) - \mu(f)| \| \mathbf{1}  \|_{\nu_p}
\nonumber \\
&\le |\mean(\bf) - \mu(f)| (b-a)^{1+1/p}
\nonumber \\
&\le C\|f''\|_{L^\infty}\frac{(b-a)^{3+1/p}}{n^2}.
\end{align}
The result now follows by combining \eqref{eq:9640} and \eqref{eq:9650}.

\subsection{Proofs of Theorem \ref{thm:noise} and Corollary \ref{cor:convergence}}
\label{proof:noise}

Let $T[0] = 0$, and for $1 \le k \le n$, let
\begin{align}
T[k]
&= \frac{b-a}{n} \sum_{j=0}^{k-1} \frac{Z[j] + Z[j+1]}{2}
\nonumber \\
&= \frac{b-a}{2n} \sum_{j=0}^{k-1} Z[j] + \frac{b-a}{2n} \sum_{j=1}^{k} Z[j]
\nonumber \\
&= \frac{1}{2}\left(S_0[k-1] + S_1[k] \right),
\end{align}
where
\begin{align}
S_0[k] = \frac{b-a}{n}\sum_{j=0}^{k} Z[j], \quad 0 \le k \le n-1,
\end{align}
and
\begin{align}
S_1[k] = \frac{b-a}{n}\sum_{j=1}^{k} Z[j], \quad 1 \le k \le n.
\end{align}

\begin{lem}
For any $t > 0$,
\begin{align}
\Prob\left( \max_{0 \le k \le n-1} \left| S_0[k] \right| \ge t\right)
\le 2\exp(-n t^2 / 2 (b-a)^2 \sigma^2),
\end{align}
and
\begin{align}
\Prob\left( \max_{1 \le k \le n} \left| S_1[k] \right| \ge t\right)
\le 2\exp(-n t^2 / 2 (b-a)^2 \sigma^2).
\end{align}
\end{lem}

\begin{proof}
The method of proof is fairly standard; see, for instance, \cite{revuz2005continuous}.
Let $\lambda > 0$, and define
\begin{align}
X[k] = \exp(\lambda S_0[k]), \quad 0 \le k \le n-1.
\end{align}
Then $X$ is a submartingale, i.e.\
$\EE[X[k] \,|\, Z[0],\dots,Z[k-1]] \ge X[k-1]$ for $1 \le k \le n-1$.
Observe that $S_0[n-1]$ is normally distributed with mean zero
and with variance
\begin{align}
\overline \sigma^2 = \frac{(b-a)^2}{n^2} \sum_{j=0}^{n-1} \sigma_j^2
\le \frac{(b-a)^2}{n} \sigma^2.
\end{align}
Consequently, using the standard formula for the Gaussian moment generating function,
\begin{align}
\EE[X[n-1]] = e^{\lambda^2 \overline{\sigma}^2 / 2}
\le e^{\lambda^2 (b-a)^2 \sigma^2 / 2 n}.
\end{align}
By Doob's Inequality (e.g.\ see Theorem 5.4.2 in \cite{durrett2019probability}),
for any real number $t$,
\begin{align}
\Prob\left( \max_{0 \le k \le n-1} S_0[k] \ge t\right)
&= \Prob\left( \max_{0 \le k \le n-1} X[k] \ge \exp(\lambda t) \right) 
\nonumber \\
&\le \EE[X[n-1]] \exp(-\lambda t )
\nonumber \\
&\le e^{\lambda^2 (b-a)^2 \sigma^2 / 2 n-\lambda t}.
\end{align}
Taking $\lambda = t n / \sigma^2 (b-a)^2$ yields the bound
\begin{align}
\Prob\left( \max_{0 \le k \le n-1} S_0[k] \ge t\right)
\le \exp(-n t^2 / 2 (b-a)^2 \sigma^2).
\end{align}
Symmetry and the union bound immediately gives the bound
\begin{align}
\Prob\left( \max_{0 \le k \le n-1}|S_0[k]| \ge t\right)
\le 2\exp(-n t^2 / 2 (b-a)^2 \sigma^2).
\end{align}
An identical argument holds for $S_1$, completing the proof.
\end{proof}

Since $T[0]=0$ and $T[k] = (S_0[k-1] + S_1[k])/2$ when $k \ge 1$, and since 
\begin{align}
\|Z\|_{\nu_\infty} = \max_{0 \le k \le n} |T[k]|,
\end{align}
the union bound shows
\begin{align}
\Prob\left( \|Z\|_{\nu_\infty} \ge t\right)
= \Prob\left( \max_{0 \le k \le n}|T[k]| \ge t\right)
\le 4\exp(-n t^2 / 2 (b-a)^2 \sigma^2).
\end{align}
This establishes \eqref{eq:concentration_bound} for $p = \infty$;
the result then follows for all $p \ge 1$ since
$\|Z\|_{\nu_p} \le \|Z\|_{\nu_\infty}$.

To see that the rest of the theorem follows from
\eqref{eq:concentration_bound}, observe that
since the right side of \eqref{eq:concentration_bound}
is summable over $n$, it follows from the Borel-Cantelli Lemma \cite{chandra2012borel}
that
\begin{math}
\lim_{n \to \infty}\| Z \|_{\nu_p} = 0
\end{math}
almost surely, establishing \eqref{eq:as_limit}. Furthermore,
\begin{align}
\EE [\| Z \|_{\nu_p}] 
&= \int_{0}^{\infty} \Prob\left(\| Z \|_{\nu_p}  \ge t \right) dt
\nonumber \\
&\le 2 \int_{0}^{\infty} \exp(-n t^2 / 2 (b-a)^2\sigma^2) dt
\nonumber \\
&=  \frac{\sigma (b-a)}{\sqrt{n}} 2 \int_{0}^{\infty} \exp(- u^2 / 2) du,
\end{align}
which establishes \eqref{eq:expectation} and completes the proof of the theorem for $Z$. The corresponding results for $Z_\cen$ may be deduced from those of $Z$
and the standard concentration bound
for $\mean(Z) \sim N(0, \sigma^2/n)$:
\begin{align}
\Prob \{ |\mean(Z)| > t \} \le 2 e^{- t^2 n / 2 \sigma^2}.
\end{align}
(See, e.g., Chapter 2 in \cite{wainwright2019high}.)
This completes the proof of Theorem \ref{thm:noise}.

To prove \eqref{eq:concentration2}, recall that
Theorem \ref{thm:convergence_lipschitz} gives the bound
\begin{align}
\left| \|f\|_{V^p} - \|\bf\|_{\nu_p} \right| \le \frac{C}{n},
\end{align}
where $C$ is a constant not depending on $p$, $t$ or $n$.
From the triangle inequality we have
\begin{align}
\label{eq:93100-1}
\| Y \|_{\nu_p} - \|f\|_{V^p}
&= \| \bf + Z \|_{\nu_p} - \|f\|_{V^p}
\nonumber \\
&\le \| \bf  \|_{\nu_p} + \| Z \|_{\nu_p} - \|f\|_{V^p}
\nonumber \\
& \le \frac{C}{n} + \|Z\|_{\nu_p},
\end{align}
and similarly, since $\|\bf\|_{\nu_p} - \|Z\|_{\nu_p} \le \|Y\|_{\nu_p}$,
\begin{align}
\label{eq:93100-2}
\|f\|_{V^p} - \|Y\|_{\nu_p}
&\le \|f\|_{V^p} - \| \bf\|_{\nu_p} + \|Z\|_{\nu_p}
\nonumber \\
&\le \frac{C}{n} + \|Z\|_{\nu_p}.
\end{align}
Combining \eqref{eq:93100-1} and \eqref{eq:93100-2} shows
\begin{align}
\label{eq:530520-1}
\left| \| Y \|_{\nu_p} - \|f\|_{V^p}\right| \le \frac{C}{n} + \|Z\|_{\nu_p},
\end{align}

If $t - C/n \ge t/2$, which holds for all $n$
sufficiently large,
then from Theorem \ref{thm:noise},
\begin{align}
\Prob\left\{\left| \|f\|_{V^p} - \|Y\|_{\nu_p} \right| \ge t \right\}
&\le \Prob\{\|Z\|_{\nu_p} \ge t - C/n\}
\nonumber \\
&\le \Prob\{\|Z\|_{\nu_p} \ge t/2\}
\nonumber \\
&\le A e^{-B n (t/2)^2 / \sigma^2},
\end{align}
which is a bound of the desired form.
This completes the proof of \eqref{eq:concentration2}.
The limit \eqref{eq:as_limit2} follows immediately from \eqref{eq:530520-1}
and the fact that $\|Z\|_{\nu_p} \to 0$ almost surely.
To prove \eqref{eq:expectation2}, 
take expectations of each side of \eqref{eq:530520-1} and apply
\eqref{eq:expectation}. The proofs of the corresponding results
for $Y_\cen$ and $\fcen$ are nearly identical.

\section{Conclusion}
\label{section:conclusion}

This paper has proven a number of robustness properties of the Volterra distances
for functions of a single variable, and the sliced Volterra distances for
functions of multiple variables. 
These results extend previous results known for Wasserstein distances.
Our results indicate that the favorable properties of Wasserstein distances may be
shared by a wider class of metrics, which may be better suited for certain applications;
for instance, the Volterra metrics are defined between functions
with negative values and unequal integrals, and are less sensitive to
large deformations of the data.

The Volterra metrics are extremely simple: one merely applies
a smoothing filter to the input functions, and then computes their
Lebesgue distance. It seems likely that
one could prove similar results for other
families of filters.
As such, the present work suggests that
constructing metrics with a desired set of robustness properties
may only require applying an appropriate collection of filters
to the input data, where the filters are designed to
smooth the functions with respect to the underlying geometry
of their domain.
In fact, many metrics that have been considered
in recent years are of exactly this type \cite{mishne2016hierarchical,
mishne2017datadriven,
leeb2016holder, leeb2018mixed, jacobs2008approximate, mishne2019comanifold}.

It is also natural to explore applications
of the Volterra distances, sliced Volterra
distances, and related metrics to problems where
Wasserstein and sliced Wasserstein distances
have been used previously.
One such area is analysis of data from cryo-electron
microscopy (cryo-EM), in which one observes two-variable projections
of a three-variable volume (a molecule),
at unknown viewing directions, from which the volume
is to be determined \cite{singer2020computational, bendory2020single,
doerr2016single}.
Wasserstein metrics have been proposed for clustering
images and parameterizing volumes in cryo-EM
\cite{singer2023, rao2020wasserstein, zelesko2020earthmover}.
Due to their robustness to deformations,
Volterra metrics, or other metrics with similar
properties, may also be appropriate
for heterogeneity analysis in cryo-EM
\cite{zhong2021cryodrgn, frank2016continuous, liao2015efficient,
aizenbud2019maxcut, lederman2020hyper, scheres2016processing, toader2023methods,
lederman2020representation},
as has been proposed for Wasserstein distances \cite{rao2020wasserstein}.
More generally, it is of interest to explore applications
to clustering, nearest
neighbor regression, and other metric-based tasks.
Questions along these lines will be pursued in future work.

\subsection*{Acknowledgements}

I thank Joe Kileel, Amit Moscovich, Rohan Rao, and Amit Singer
for stimulating discussions on the papers
\cite{rao2020wasserstein}, \cite{kileel2020manifold}, and \cite{zelesko2020earthmover}.
I give additional thanks to Amit Singer
for helpful feedback and suggestions on earlier versions of the manuscript.
I acknowledge support from  NSF BIGDATA award IIS-1837992, BSF award 2018230,
and NSF CAREER award DMS-2238821.

\bibliographystyle{plain}
\bibliography{refs_norms}

\end{document}